\tikzset{snake it/.style={decorate, decoration=snake}}
\numberwithin{equation}{section} 
\newcommand\ds{\displaystyle}
\def\O{\Omega}
\def\o{\omega}
\def\G1{{\bf 1}}
\def\S{{\GS}}
\def\d{\delta}
\def\di{\mathrm{dist}}
\def\SO{\mathrm{SO}}
\def\e{\varepsilon}
\def\fv{\frak{v}}
\def\fu{\frak{u}}
\def\C{\mathcal{C}}
\def\id{\text{\bf id}}
\def\sk{\text{Skew}}
\def\sy{\text{Sym}}
\def\Te{{\cal T}_\varepsilon}
\def\fw{\frak w}
\def\Ga{{\bf a}}
\def\Ge{{\bf e}}
\def\ds{\displaystyle}
\def\N{{\mathbb{N}}}
\def\R{{\mathbb{R}}}
\def\Z{{\mathbb{Z}}}
\def\D{{\mathbb{D}}}
\def\R{{\mathbb{R}}}
\def\N{{\mathbb{N}}}
\def\Z{{\mathbb{Z}}}
\def\Uc{{\mathcal{U}}}
\def\Vc{{\mathcal{V}}}
\def\Yc{{\mathcal{Y}}}
\def\Mc{{\mathcal{M}}}
\def\wh{\widehat }
\def\wt{\widetilde }
\def\X{\times }
\def\Ga{{\bf a}}
\def\Ge{{\bf e}}
\def\Gu{{\bf u}}
\def\Gv{{\bf v}}
\def\Gw{{\bf w}}
\def\GD{{\bf D}}
\def\GI{{\bf I}}
\def\GJ{{\bf J}}
\def\GM{{\bf M}}
\def\GO{{\bf O}}
\def\GQ{{\bf Q}}
\def\GR{{\bf R}}
\def\GS{{\bf S}}
\def\GU{{\bf U}}
\def\GV{{\bf V}}
\definecolor{skyblue}{RGB}{135,206,235}
\definecolor{deepskyblue}{RGB}{0, 191, 255}
\newtheorem{theorem}{Theorem}
\newtheorem{definition}{Definition}
\newtheorem{assumption}{Assumption}
\newtheorem{remark}{Remark}
\newtheorem{lemma}{Lemma}
\begin{document}

%\begin{frontmatter}

%\pretitle{}
\title{Homogenization, dimension reduction and linearization of thin elastic plate}
\author{Amartya Chakrabortty,  Georges Griso,  Julia Orlik}
\date{}

\maketitle
{\bf Keywords:}
	Linearization, homogenization, dimension reduction,  non-linear elasticity, $\Gamma$-convergence, plate theory, decomposition techniques.\\

{\bf Mathematics Subject Classification (2010):} 35B27, 35J86, 35C20, 74K10, 74F10, 76M30, 76M45.
{\let\thefootnote\relax\footnotetext{
	Amartya Chakrabortty, Julia Orlik: SMS, Fraunhofer ITWM, Kaiserslautern 67663, Germany
	
	Emails: amartya.chakrabortty@itwm.fraunhofer.de, julia.orlik@itwm.fraunhofer.de\\
	
	Georges Griso: LJLL, Sorbonne Universit\'e, Paris, France
	
	Email: georges.griso@gmail.com\\
	
	Corresponding author Email: amartya.chakrabortty@itwm.fraunhofer.de}}

\begin{abstract}
           This paper investigates the homogenization, dimension reduction, and linearization of a composite plate subjected to external loading within the framework of non-linear elasticity problem. The total elastic energy of the problem is of order $\sim h^2\varepsilon^{2a+3}$, where $a\geq1$. The paper is divided into two parts: The first part presents the simultaneous homogenization, dimension reduction and linearization ($(\e,h)\to(0,0)$) of a composite plate without any coupling assumption of $\e$ and $h$. The second part consists of the rigorous derivation of linearized elasticity as a limit of non-linear elasticity with small deformation and external loading conditions. The results obtained demonstrate that the limit energy remains unchanged when first linearization ($h\to 0$) is performed, followed by simultaneous homogenization dimension reduction ($\e\to0$) and when both limits approach zero simultaneously, i.e. $(\e,h)\to (0,0)$.
The exact form of the limit energy(s) is obtained through the decomposition of plate deformations and plate displacements. By using the $\Gamma$-convergence technique, the existence of a unique solution for the limit linearized homogenized energy problem is demonstrated. These results are then extended to certain periodic perforated plates. 
\end{abstract}

%%%%%%%%%%% The article body starts:
		\section{Introduction}		
		The periodic thin composite plate is characterized by two parameters: thickness $\d\in (0,1)$ and period $\e\in (0,1)$. Two asymptotic analysis are considered: dimension reduction as $\d\to0$ and homogenization as $\e\to0$. When both parameters tends zero simultaneously, this process is referred to as simultaneous homogenization and dimension reduction $(\e,\d)\to (0,0)$. \\
Given that $(\e,\d)\in (0,1)^2$, it follows that the limit 
\begin{equation}
	\lim_{(\e,\d)\to (0,0)} {\d\over \e}=\kappa\in [0,+\infty]
\end{equation} 
exists for at least a subsequence. In the present work, it is assumed that this limit exists for the entire sequence, with limit $\kappa\in (0,+\infty)$, i.e. $\d$ and $\e$ are of same order. For simplicity of notation, $\d=\e$ is chosen. Thus, as $\e\to0$, simultaneous homogenization and dimension reduction (SHD) is achieved.

\medskip
This paper presents a theoretical asymptotic analysis of a composite plate with external loading that is periodic with respect to $\e$ in two in-plane directions and has a thickness of $2\e$ in the framework of non-linear elasticity. The paper is divided into two parts. The first part deals with Simultaneous Homogenization Dimension Reduction and Linearization (SHDL). The total elastic energy of the system is scaled to be of order $h^2\e^{2a+3}$, where $a \geq 1$. In the second part, it is shown that linear elasticity arises as a limit of non-linear elasticity in the realm of small deformation and external loading.  The study demonstrates that the same limit is obtained when linearization is performed first, followed by SHD as well as when both asymptotic analyses are performed simultaneously. The primary focus of this work is on the critical case of $a=1$, commonly referred to as the von-K\'arm\'an regime.

\medskip
The domain of discussion is a periodic composite plate defined as $\O_\e=\o\X(-\e,\e)$ with $\o\subset  \R^2$ be a bounded domain with Lipschitz boundary. The total energy with external loading is given by
$$		\GJ_{\e,h}(v)\doteq \int_{\O_{\e}}\wh{W}_{\e}(x,\nabla v)-\int_{\O_{\e}}f_{h,\e}\cdot (v-\id) dx,$$
where $ f_{h,\e}$ is the external loading. 
%Since small deformations are being considered, the re-scaled displacement are introduced as $v = \id + h\Gu$ for $h > 0$. 
Then 
the minimization problem reads as
$$m_{\e,h}=\inf_{v\in\GV_\e}\GJ_{\e,h}(v),$$
where $\GV_\e$ contains the set of admissible deformations. The external forces are scaled in such a way that the total elastic energy is of order $h^2\e^{2a+3}$, i.e.
$$\int_{\O_\e}\wh{W}_\e(x,\nabla v)\,dx\sim \GO(h^2\e^{2a+3})\quad \text{with $a\geq 1$}.$$

 The main result of the first part of the paper is given in Theorem \ref{MainConR} (SHDL), which states that
	\begin{equation*}
		\lim_{(\e,h)\to (0,0)}{m_{\e,h}\over h^2\e^{2a+3}}=\min_{(\Vc,\wh \fv)}\GJ(\Vc,\wh\fv)=\GJ(\Uc,\wh\fu),
	\end{equation*}
		where 
	$$\GJ(\Uc,\wh\fu)=\int_{\o\X\Yc}\GQ(y,E^{Lin}(\Uc)+e_y(\wh\fu))\,dx'dy-|\Yc|\int_{\o} f\cdot\Uc\;dx'.$$
	In the second part of the paper, first the linearization result is presented in Theorem \ref{TH4},  which states
	\begin{equation*}
		\lim_{h\to 0}{m_{\e,h}\over h^2\e^{2a+3}}={m_\e\over \e^{2a+3}}={\GJ_\e^{Lin}(\Gu_\e)\over\e^{2a+3}},
	\end{equation*}
	where
	\begin{equation*}
		m_\e=\inf_{\Gv_\e\in\GU_\e}\GJ^{Lin}_\e(\Gv_\e)\quad
		\text{with}\quad
		\GJ^{Lin}_\e(\Gu_\e)=\int_{\O_\e}\GQ\left({x\over \e},e(\Gu_\e)\right)dx-\int_{\O_\e}f_\e\cdot\Gu_\e dx,
	\end{equation*}
	followed by SHD, as shown in Theorem \ref{TH03}, where it is established that
	\begin{equation*}
		\lim_{\e\to0}\lim_{h\to0}{m_{\e,h} \over h^2\e^{2a+3}} = \min_{(\Vc,\wh\fv)\in \D} \GJ(\Vc,\wh\fv)=\GJ(\Uc,\wh\fu).
	\end{equation*}
	Finally, the main commutavity result is presented in the Theorem \ref{MainRe} which states that
	$$\lim_{(\e,h)\to(0,0)}{m_{\e,h}\over h^2 \e^{2a+3}}=\lim_{\e\to0}\lim_{h\to0}{m_{\e,h}\over h^2\e^{2a+3}}\GJ(\Uc,\wh\fu).$$
	
The linearized energy, derived from the re-scaled nonlinear energy, is discussed in \cite{masoL} for the stationary case. In that papers, the authors demonstrate the $\Gamma$-convergence limit of finite elasticity as linearized elasticity.  In this paper, a form of $\Gamma$-convergence is used to linearize the nonlinear elastic energy with small external loading and to prove the existence of a minimizer for the limit homogenized energy problem. The total elastic energy analyzed in this paper is a heterogeneous, an-isotropic, frame indifferent, hyperelastic, non-polyconvex, and non-elliptic (see \eqref{34}--\eqref{Eq62}). Generally, a minimizer for such total energy does not exist on the set of admissible deformations $\GV_\e$ (see \eqref{SAD01}). No additional assumptions are made on the existence of solutions for the linearization and simultaneous homogenization dimension reduction. A classical example of such energy in the isotropic case is the St.\ Venant-Kirchhoff material (see Remark \ref{Rem02}). For more details on homogenization using $\Gamma$-convergence, please refer to \cite{masoG}, \cite{ABGamma}, \cite{Friesecke06}, and \cite{FJMKarman}. The commutativity between homogenization and linearization is demonstrated in the periodic setting in \cite{neuL} and in the non-periodic setting in \cite{armstrong} and \cite{LinS1}. However, it should be noted that homogenization does not commute with dimension reduction. For further details, see \cite{NonComHD} and \cite{CDG}.
\begin{figure}[h]
	\centering
	\begin{tikzpicture}[scale=1.3, baseline={(0,-2.5)}]
		\node[draw] (myPoint) at (0,0) {\Large${m_{\e,h}\over h^2 \e^{2a+3}}$};
		\node[draw] (myPoint) at (3.4,0) {\Large$m_L$};
		\node[draw] (myPoint) at (1.95,-1.2) {\Large${m_\e\over \e^{2a+3}}$};
		%			\draw[->, red, thick, bend left=40] (0.65,0.5) to (2.9,0.5);
		\draw[->, red, thick] (0.75,0) -- (2.9,0);
		\draw[->, red, thick] (0.63,-0.37) -- (1.40,-0.87);
		\draw[->, red, thick] (2.45,-0.87) -- (3,-0.3);
		\node (myPoint) at (1.7,0.3) {\large$(\e,h)\to(0,0)$};
		%\node (myPoint) at (1.75,1.2) {\large$\e\to0$};
		\node[below, rotate=-33] at (1,-0.75) {\large$h\to0$};
		\node[below, rotate=49] at (2.75,-0.70) {\large$\e\to0$};
	\end{tikzpicture}		
	\caption{Relation between Linearization, SHD and SHDL for $a\geq 1$.}
\end{figure}
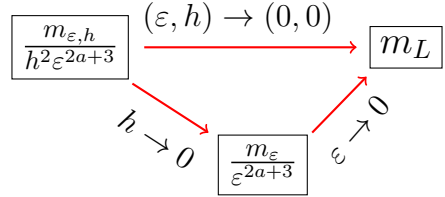

The re-scaling unfolding operator is the tool used for SHD. This operator is a variation of the periodic unfolding operator, which was initially introduced in \cite{CDG2} and further developed in \cite{CDG1}. The unfolding operator is specifically suitable for periodic homogenization problems that depend on the parameter $\varepsilon$. For a more comprehensive understanding of homogenization techniques, please refer to \cite{tartar1}, \cite{asymptotic1}, \cite{Homogenization1}, \cite{twoscale1}, and \cite{twoscale2}. For homogenization in perforated domain using unfolding operator, see \cite{grisoHoles1}--\cite{grisoHoles2}.  Additionally, for further literature on dimension reduction, consult \cite{multi1} and \cite{friesecke02}. The approach presented in this paper, which involves homogenization and dimension reduction using the re-scaling unfolding operator, has similarities to the approach presented in \cite{GJMContact}, \cite{Amartya}, \cite{hauck2}, and \cite{GOW}. The key tool used to derive the exact limit energy is the plate deformation decomposition from \cite{Shell1} and the plate displacement decomposition from \cite{GKL}. Further decomposition results can be found in \cite{BGRod} and \cite{BGRod1}.

\medskip
Previous works have explored the topic of asymptotic analysis (homogenization and dimension reduction) in elasticity in the von-K\'arm\'an regime, references such as \cite{NV}, \cite{Velcic1}, \cite{GOW2} and \cite{Amartya} can be consulted. For general references on the theory of elasticity, \cite{elasticity}, \cite{CiarletShell1}, and \cite{elasticityO} are recommended. For non-linear plate theory, the seminal works mentioned in \cite{friesecke02}, \cite{Friesecke06}, \cite{FJMKarman} and \cite{Shell1} are highly recommended.

\medskip
The paper is structured as follows: Initially, general notations are introduced in Section \ref{N-1}. Following that, the composite periodic domain and the non-linear elasticity problem are presented in Section \ref{Sec03}. In the subsequent section \ref{Sec04}, preliminary results such as plate deformation and displacement decomposition are provided, and the periodic unfolding operator is recalled. Moving to Section \ref{Sec06}, assumptions regarding the external loadings are made, and the re-scaled minimization sequence is established. Sections \ref{Sec08} and \ref{Sec05} present the asymptotic analysis of simultaneous homogenization, dimension reduction, and linearization (SHDL) and simultaneous homogenization and dimension reduction (SHD), respectively. Finally, the results are extended to a certain type of perforated plates in Section \ref{Last}.
\section{Notations} \label{N-1}
Throughout the paper, the following notation will be used:
\begin{itemize}
	\item  $\e,h\in \R^+$ are small parameters and $\kappa>0$ is a real positive number;
	\item $\Yc:=Y\times (-1, 1 )$ be the reference cell of the structure, where $Y\subset \R^2$ bounded domain;
	%		\item $\phi_i\doteq \phi\cdot\Ge_i$ for $i\in \{1,2,3\}$,
	\item $x=(x',x_3)=(x_1,x_2,x_3)\in \R^3$, $\ds \partial_i\doteq\frac{\partial}{\partial x_i}$ denote the partial derivative w.r. to $x_i$, $i\in \{1,2,3\}$;
	\item $\id$ denote the identity map from $\R^3$ to $\R^3$, $\id_2$ denotes the identity map from $\R^2$ to $\R^3$ and $\GI_3$ denote the identity matrix of order $3$;
	\item $\GM_3$ is the space of $3\times 3$  real matrices, $\S_3$ is the space space of real $3\X3$ symmetric matrices and
	$$\GM^+_3=\{F\in\GM_3: \text{det}(F)>0\};$$
	\item the mapping $v:\O_\e\to\R^3$ denotes the deformation arising in response to forces and pre-strain, it is related to the displacement $u$ by $u=v-\text{\bf id}$. The gradient of deformation and displacement is denoted by $\nabla v$ and $\nabla u$ respectively, with $\nabla v, \nabla u\in \GM_3$ for every $x\in \O_\e$;
%	\item  for a.e. $x'\in\R^2$, 
%	$$ x'=\left(\e\left[{x'\over \e}\right]+\e\left\{{x'\over\e}\right\}\right),\quad \text{where}\;\; \left[{x'\over \e}\right]\in\Z^2,\;\;\left\{{x'\over\e}\right\}\in Y;$$
	\item  $|\cdot|_F$ denote the Frobenius norm on $\R^{3\X3}$;
	\item  for every displacement $u\in H^1(\O_\e)^3$, the linearized strain tensor is given by
	$$e(u)={1\over 2}\big(\nabla u+(\nabla u)^T\big)\quad \text{and}\quad e_{ij}(u)={1\over 2}\left({\partial u_i\over \partial x_j}+{\partial u_j\over \partial x_i}\right)$$
	where $i,j\in \{1,2,3\}$;
	\item for every $v\in H^1(\O_\e)^3$, $\GD(v)$ denote
	$$\GD(v)\doteq\|\di(\nabla v,\SO(3))\|_{L^2(\O_\e)}.$$
	\item for every deformation $v\in H^1(\O_\e)^3$ the Green St. Venant's strain tensor is given by
	$${1\over 2}\left((\nabla v)^T(\nabla v)-\GI_3\right)={1\over 2}\left((\nabla u)^T+\nabla u+(\nabla u)^T(\nabla u)\right),$$
	where $u= v-\text{\bf id}$ is corresponding displacement;
	\item 	for any $\psi\in H^1(\Yc)^3$, (resp. $L^2\big(\o; H^1(\Yc)\big)^3$) we denote $e_y(\psi)$ the $3\X 3$ symmetric matrix whose entries are
	$$e_{y,ij}(\psi)={1\over 2}\left({\partial \psi_i\over \partial y_j}+{\partial \psi_j\over \partial y_i}\right);$$
	\item for every $F\in \GM_3$, denote by 
\begin{equation}\label{SSE}
\sy(F)={1\over 2}\left(F+F^T\right),\quad\sk(F)={1\over 2}\left(F-F^T\right),\quad\text{and}\quad {\bf E}(F)={1\over 2}\left(F^TF-\GI_3\right);
\end{equation}
	\item $(\alpha,\beta)\in \{1,2\}^2$ and $(i,j,k,l)\in \{1,2,3\}^4$ (if not specified).
		\item By convention, in estimates it is simply written $L^2(\O_\e)$ instead of $L^2(\O_\e)^3$ or $L^2(\O_\e)^{3\times 3}$. The complete space is written when the weak or strong convergences are given.
\end{itemize}
	In this paper, the Einstein convention of summation over repeated indices has been used. Whenever, it is not explicitly mentioned $C$ represents a generic constant independent of $\e,h$. These are some of the general notations used in this paper, notation which are not defined here are defined in the main content of the paper.

\section{Domain description and the non-linear elasticity problem}\label{Sec03}

\subsection{Description of the structure and natural assumption} 
Let $\o$ be a bounded domain in $\R^2$ with Lipschitz boundary. The reference domain is given by
$$\O_\e=\o\X(-\e,\e).$$

Let $Y\subset\R^2$ be a bounded domain in $\R^2$ such that, it has paving property with respect to the additive subgroup $\Z^2$ of $\R^2$. In periodic setting $x'\in\R^2$ can be decomposed a.e. as
$$ x'=\left(\e\left[{x'\over \e}\right]+\e\left\{{x'\over\e}\right\}\right),\quad \text{where}\;\; \left[{x'\over \e}\right]\in\Z^2,\;\;\left\{{x'\over\e}\right\}\in Y.$$
Set
$$\Xi_\e=\left\{\xi\in\Z^2|\e\xi+\e Y\subset \o\right\},\quad \wh{\o}_\e=\text{interior}\left\{\bigcup_{\xi\in\Xi_\e}(\e\xi+\e\overline{Y})\right\},\quad \Lambda_\e=(\o\setminus\overline{\wh{\o}_\e}),$$
where the set $\Lambda_\e$ contains the parts of the cells intersecting the boundary $\partial \o$. We also set $\wh \O_\e=\wh \o_\e\X (-\e,\e)$.

It is assumed that the composite plate $\O_\e$ is clamped on the part of boundary $\Gamma_\e=\gamma\X(-\e,\e)$, where $\gamma\subset \partial\o$ with non-zero area. Then, the set of admissible deformations is denoted by
\begin{equation}\label{SAD01}
	\begin{aligned}
		\GV_{\e}\doteq\Big\{v\in H^1(\O_{\e})^3\;|\;v=\id\quad \hbox{on }\; \Gamma_\e\Big\}.
	\end{aligned}
\end{equation}
 \begin{remark}\label{Re04}
	For every $v\in\GV_\e$, the corresponding re-scaled displacement $\Gu$ given by $v=\id+h\Gu$ satisfies the following
	$$\Gu=0,\quad \text{on}\quad\Gamma_\e.$$
	Let us define the set of admissible re-scaled displacements by
	\begin{equation}
		\GU_\e=\left\{\Gu\in H^1(\O_\e)^3\;|\;\Gu=0\quad\text{on}\;\;\Gamma_\e\right\}.
	\end{equation}
	For a $h>0$, let $\Gu\in\GU_\e$ then observe that $v=\id+h\Gu\in \GV_\e$.
\end{remark}  
\subsection{The non-linear elasticity problem}
In this section, the elasticity problem is introduced and it begins by establishing the initial setup.
%	$$Y=(-1,1)^2,\quad\text{and}\quad\Yc=(-1,1)^2\X(-\kappa,\kappa).$$
It is assumed $a_{ijkl}\in L^{\infty}(\Yc)$, $i,j,k,l\in\{1,2,3\}$ is the Hooke's coefficient and it satisfy both the symmetry condition
\begin{equation}\label{S33}
	a_{ijkl}(y)=a_{jikl}(y)=a_{klji}(y)\quad \text{for a.e.}\quad y\in\Yc,
\end{equation}
and the coercivity condition $(K_0>0)$
\begin{equation}\label{CoeCon}
	a_{ijkl}(y)S_{ij}S_{kl}\geq K_0S_{ij}S_{ij}=K_0|S|_F^2\quad\text{for a.e.}\;\;y\in\Yc\quad\text{and for all}\;\; S\in\GS_3.
\end{equation}
The coefficients $a_{ijkl}^{\e}$ of the Hooke's tensor on the periodic composite plate for $x\in\O_{\e}$ are given by
\begin{equation}
	a_{ijkl}^{\e}=a_{ijkl}\left(\left\{{x'\over \e}\right\},{x_3\over \e}\right),\quad \text{for a.e.}\;\;x\in\O_{\e}.
\end{equation}
For a given applied force $f_{\e}$, the total energy $\GJ_{\e,h}$ reads as 
\begin{equation}\label{34}
	\GJ_{\e,h}(v)\doteq \int_{\O_{\e}}\wh{W}_{\e}(x,\nabla v)\,dx-\int_{\O_{\e}}f_{\e,h}\cdot (v-\id) dx,\quad \text{for all}\;\;v\in\GV_\e,
\end{equation} 
with
the local density energy $\wh W_{\e}\, :\ \,  \O_\e\times \GM_3 \longrightarrow [0,+\infty]$ given by
\begin{equation}\label{Eq62}
	\wh{W}_{\e}(F)=\widehat{W}_{\e}(x , F)=\left\{
	\begin{aligned}
		&\begin{aligned}
			&  \GQ\left(\frac{x}{\e} , \text{\bf E}(F)\right) &&\hbox{if}\;\;  \hbox{det}(F)> 0,\\
			& +\infty  &&\hbox{if}\;\;  \hbox{det}(F)\leq 0,
		\end{aligned}\quad\hbox{for a.e. }  x \in \O_\e.
	\end{aligned}\right.
\end{equation}
The quadratic form $\GQ$ is defined by
$$\GQ\left({x\over\e},S\right)= a^\e_{ijkl}\left({x\over \e}\right)S_{ij}S_{kl} \quad \hbox{for a.e. $x\in\O_\e$ and for all $S\in \GS_3$}.$$
The inequality \eqref{CoeCon} yields
\begin{equation}\label{eq64}
	K_1|F^{T}F-\GI_3|^2_F\leq  \GQ\Big({x\over \e} , {1\over 2}\big (F^{T}F -\GI_3\big)\Big)\qquad \hbox{for a.e. } x\in \O_\e.
\end{equation} 
It is recalled (see e.g. \cite{Friesecke06})
\begin{equation}\label{SymCon}
	\di(F, \SO(3))\leq |F^TF-\GI_3|_F \quad \text{for all}\;\;F \in \GM_3\quad\hbox{such that}\;\; \det(F)>0.
\end{equation}  
For a given applied force $f_{\e,h}$, we set
	\begin{equation}\label{MainPro}
	 m_{\e,h}=\inf_{v\in\GV_\e}\GJ_{\e,h}(v), 
	\end{equation}
	In Section \ref{Sec06} (see \eqref{Req65}) with applied forces given by \eqref{FA01}, it is shown that there exists $c_0>0$ independent of $\e$ and $h$ such that (for $a\geq 1$)
	\begin{equation*}
	-c_0\leq {m_{\e,h}\over h^2\e^{2a+3}} \leq 0.
\end{equation*}
	{\bf It should be noted that the question of whether this infimum is reached remains open.}

\begin{remark}\label{Rem02}
	St. Venant-Kirchhoff's  material is a classical example of local elastic energy satisfying the above assumptions, and which is given by
	$$
	\widehat{W}(F)=\left\{
	\begin{aligned}
		&{\lambda\over 8}\big(tr(F^TF-\GI_3) \big)^2+{\mu\over 4}\left|F^T F-\GI_3\right|^2_F\qquad&&\hbox{if}\quad \det(F)>0\\
		&+\infty &&\hbox{if}\quad\det(F)\le 0.
	\end{aligned}
	\right.
	$$ 
\end{remark}
For the purpose of analysis, it is assumed that the applied forces $f_{\e,h}$ belong  to $ L^2(\O_\e)^3$, where  $f_{\e,h}$ is scaled from  $f\in L^2(\o)^3$ using $\e$ and $h$ which is given by (for $a\geq 1$)
\begin{equation}\label{FA01}
		f_{\e,h}(x)=h f_\e(x)=h\big(\e^{a+1}f_\alpha(x')\Ge_\alpha+\e^{a+2} f_3(x')\Ge_3\big),\quad x\in\O_\e,\quad f\in [L^2(\o)]^3.
\end{equation}
\section{Preliminary results}\label{Sec04}
\subsection{Decomposition of plate deformations}\label{SS41}
Let $v\in\GV_\e$ be a plate deformation. Recalling a result for decomposition of plate deformations from \cite{Shell1}.
\begin{lemma}[Theorem 3.3, \cite{Shell1}]\label{Lem01}
	There exists a constant $C(\o)$ which only depends on $\o$ such that any $v\in H^1(\O_\e)^3$ be a plate deformation then
	\begin{equation}\label{EQ51}
		v = V_{e} + \overline{v} \;\; \text{ a.e. in } \;\; \O_\e.
	\end{equation}
	The quantity   $V_{e} \in H^1{(\O_\e)}^3$ is called elementary plate deformation and is defined by 
	\begin{equation}\label{Eqelem}
		V_e(x)= \mathcal{V}(x_1,x_2) +x_3{\GR}(x_1,x_2) \Ge _{3}  \quad \hbox{for a.e. } x\in \O_\e
	\end{equation}  with $\Vc\in H^1(\o)^3$ and $\GR\in H^1(\o)^{3\X3}$. $\overline{v} \in H^1{(\O_\e)}^3$ is a residual displacement.\\
	The above fields satisfy the following estimates
	\begin{equation}\label{E0}
		\begin{aligned}
			\begin{aligned}
				&\|\overline{v}\|_{L^2(\O_\e)}+\e\|\nabla \overline{v}\|_{L^2(\O_\e)}+ \e\|\nabla \Gv-{\GR}\|_{L^2(\O_\e)}\le C\e\GD(v),\\
				&\big\| \partial_\alpha\GR\big\|_{L^2(\o)}\le {C\over \e^{3/2}}\GD(v),\\
				&\big\|\partial_\alpha\mathcal{V}-{\GR}\Ge_\alpha\big\|_{L^2(\o)}\leq {C \over \e^{1/2}} \GD(v).
			\end{aligned}
		\end{aligned}
	\end{equation}  The constant does not depend on $\e$.
\end{lemma}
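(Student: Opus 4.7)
The plan is to adapt the classical Griso construction, combining the Friesecke--James--M\"uller (FJM) geometric rigidity estimate applied cell by cell with a $Q_1$-type interpolation that glues the local rotations into a single $H^1$ field. First, I would tile $\wh\o_\e$ by the cells $\e(\xi+Y)$, $\xi\in\Xi_\e$, and consider the columns $C_\xi=\e(\xi+Y)\X(-\e,\e)$. After rescaling each column by $\e$ to a fixed reference domain of aspect ratio $O(1)$, FJM delivers $Q_\xi\in\SO(3)$ and $b_\xi\in\R^3$ satisfying
$$\|\nabla v-Q_\xi\|_{L^2(C_\xi)}^2+\e^{-2}\|v-Q_\xi x-b_\xi\|_{L^2(C_\xi)}^2\le C\int_{C_\xi}\di^2(\nabla v,\SO(3))\,dx,$$
with $C$ depending only on $Y$; cells intersecting the boundary strip $\Lambda_\e$ are handled by a standard reflection/extension into a slightly enlarged reference cell so that this uniform constant survives.

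Second, I would define $\GR\in H^1(\o)^{3\X 3}$ as the $Q_1$-bilinear interpolant of the matrices $\{Q_\xi\}_{\xi\in\Xi_\e}$ taken at cell vertices. Applying FJM on the union $C_\xi\cup C_{\xi'}$ of two neighbouring cells yields
$$|Q_\xi-Q_{\xi'}|^2\le C\e^{-3}\int_{C_\xi\cup C_{\xi'}}\di^2(\nabla v,\SO(3))\,dx,$$
the factor $\e^{-3}$ arising from converting an $L^2$-integral over a volume of size $\sim\e^3$ into a pointwise bound on a constant matrix. Summation over adjacent pairs gives $\|\partial_\alpha\GR\|_{L^2(\o)}\le C\e^{-3/2}\GD(v)$, while the local rigidity directly yields $\|\nabla v-\GR\|_{L^2(\O_\e)}\le C\GD(v)$. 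The mid-plane field $\Vc\in H^1(\o)^3$ is built analogously as the bilinear interpolant of cell-averages of $v-x_3\GR\Ge_3$. The discrete increment $\Vc_{\xi'}-\Vc_\xi$ between two adjacent cells is then compared with $\e Q_\xi\Ge_\alpha$ by means of the second half of the FJM estimate together with Cauchy--Schwarz on the cell averages, producing $\|\partial_\alpha\Vc-\GR\Ge_\alpha\|_{L^2(\o)}\le C\e^{-1/2}\GD(v)$; the improvement from $\e^{-3/2}$ to $\e^{-1/2}$ reflects that $\Vc$ carries displacement units rather than gradient units.

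Setting $V_e(x)=\Vc(x')+x_3\GR(x')\Ge_3$ and $\overline v=v-V_e$, the residual estimates would follow from a cellwise Poincar\'e--Wirtinger inequality applied to the mean-free part of $\overline v$ on each column, combined with the gradient control obtained above, yielding $\|\overline v\|_{L^2(\O_\e)}+\e\|\nabla\overline v\|_{L^2(\O_\e)}\le C\e\GD(v)$ after summation over $\Xi_\e$. The main obstacle will be the precise $\e$-power bookkeeping: verifying that the $Q_1$-interpolation really produces the sharp $\e^{-3/2}$ and $\e^{-1/2}$ factors, and that the extension of $\GR,\Vc$ across the boundary strip $\Lambda_\e$ does not degrade these estimates. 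A secondary delicate point is that the FJM constant on the rescaled unit cell must be independent of $\e$, which hinges on the cell shape $Y$ being fixed and on a uniform choice of reference domain in the rescaling step.
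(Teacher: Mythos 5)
The paper does not reprove this lemma; it cites it directly from \cite{Shell1}, and your sketch is a faithful reconstruction of the argument in that reference: columnwise FJM geometric rigidity producing $Q_\xi\in\SO(3)$ with the scaled estimate you wrote, the comparison $|Q_\xi-Q_{\xi'}|^2\le C\e^{-3}\int_{C_\xi\cup C_{\xi'}}\di^2(\nabla v,\SO(3))\,dx$ over adjacent columns, and a $Q_1$-interpolation of the $\{Q_\xi\}$ to glue them into $\GR\in H^1(\o)^{3\times 3}$; the $\e$-power bookkeeping you carry out is precisely what produces $\e^{-3/2}$ and $\e^{-1/2}$ in \eqref{E0}$_{2,3}$, and the local estimate $\|\nabla v-\GR\|_{L^2(\O_\e)}\le C\GD(v)$ follows as you describe.

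One deviation worth cleaning up: in Griso's construction $\Vc$ is not a $Q_1$-interpolant of cell averages but simply the fiber average
\begin{equation*}
\Vc(x')=\frac{1}{2\e}\int_{-\e}^{\e}v(x',x_3)\,dx_3.
\end{equation*}
(Note also that your subtraction of $x_3\GR\Ge_3$ before averaging is vacuous, since $\int_{-\e}^{\e}x_3\,dx_3=0$.) With the fiber-average definition one gets $\partial_\alpha\Vc-\GR\Ge_\alpha=\frac{1}{2\e}\int_{-\e}^{\e}(\partial_\alpha v-\GR\Ge_\alpha)\,dx_3$, so \eqref{E0}$_3$ follows immediately from Cauchy--Schwarz together with $\|\nabla v-\GR\|_{L^2(\O_\e)}\le C\GD(v)$; moreover $\overline v=v-\Vc-x_3\GR\Ge_3$ then has zero mean on each fiber $\{x'\}\times(-\e,\e)$, so the one-dimensional Poincar\'e--Wirtinger inequality in $x_3$ yields $\|\overline v\|_{L^2(\O_\e)}\le C\e\|\partial_3\overline v\|_{L^2(\O_\e)}\le C\e\GD(v)$ directly, with no need to isolate a ``mean-free part'' as your phrasing suggests. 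Modulo this cosmetic simplification, your plan matches the cited proof.
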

Moreover, if  $v\in\GV_\e$ then, we can construct the fields $V_e$ and $\overline{v}$ such that the following boundary conditions hold (see  \cite{Shell1}):
\begin{equation}
	\Vc=\id\quad\text{and}\quad\GR=\GI_3\quad\text{a.e. on}\;\;\gamma,\quad\text{with}\quad\overline{v}=0,\quad\text{a.e. on}\;\;\Gamma_\e.                                                           
	\end{equation}
Now, from \eqref{E0} one has
\begin{lemma}\label{FinEst1}
	The terms $\GR$ and $\Vc$ of the decomposition \eqref{Eqelem} satisfy
	\begin{equation}\label{FinEst} 
		\begin{aligned}
			&\|\GR-\GI_3\|_{H^1(\o)}+\|\Vc-\id\|_{H^1(\o)} \leq {C\over \e^{3/2}} \GD(v).
		\end{aligned}
	\end{equation} The constants do not depend on $\e$.
\end{lemma}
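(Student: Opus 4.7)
The plan is to derive the bound as a straightforward chain of Poincaré-type inequalities, using the three estimates provided in Lemma \ref{Lem01} together with the boundary conditions $\GR=\GI_3$ and $\Vc=\id$ on $\gamma$ stated just before the lemma. The controlling scale will be $\e^{-3/2}\GD(v)$, which appears directly in the gradient estimate for $\GR$.

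First I would handle $\GR$. Since $\gamma\subset\partial\o$ has nonzero measure and $\GR-\GI_3$ vanishes there, a Poincaré–Friedrichs inequality on $\o$ applied componentwise gives $\|\GR-\GI_3\|_{L^2(\o)}\le C\|\nabla\GR\|_{L^2(\o)}$, where $C=C(\o,\gamma)$. Combining this with the second line of \eqref{E0} yields
\begin{equation*}
\|\GR-\GI_3\|_{H^1(\o)}\le C\|\nabla\GR\|_{L^2(\o)}\le \frac{C}{\e^{3/2}}\GD(v).
\end{equation*}

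Next I would handle $\Vc$. Writing $\partial_\alpha(\Vc-\id)\Ge_\alpha=(\partial_\alpha\Vc-\GR\Ge_\alpha)+(\GR-\GI_3)\Ge_\alpha$ (after observing that $\partial_\alpha \id = \Ge_\alpha$ when $\id$ is read as the natural embedding $\o\hookrightarrow\R^3$), I would apply the triangle inequality together with the third line of \eqref{E0} and the bound on $\GR-\GI_3$ just obtained:
\begin{equation*}
\|\nabla(\Vc-\id)\|_{L^2(\o)}\le \frac{C}{\e^{1/2}}\GD(v)+\|\GR-\GI_3\|_{L^2(\o)}\le \frac{C}{\e^{3/2}}\GD(v),
\end{equation*}
where the $\e^{-3/2}$ term dominates since $\e\in(0,1)$. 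Then a second application of Poincaré–Friedrichs, this time using $\Vc-\id=0$ on $\gamma$, transfers this gradient bound to an $H^1$ bound of the same order. Summing the two contributions yields the desired inequality.

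There is no genuine obstacle here: the lemma is purely a corollary of \eqref{E0} together with the clamped boundary conditions for $\Vc$ and $\GR$ on $\gamma$. The only point to be careful about is identifying which estimate sets the final scale — the gradient bound on $\GR$ is worse ($\e^{-3/2}$) than the one on $\partial_\alpha\Vc-\GR\Ge_\alpha$ ($\e^{-1/2}$), so after the triangle inequality the overall rate is $\e^{-3/2}$, and one must not forget to track that the Poincaré constant depends only on $\o$ and $\gamma$, hence is independent of $\e$.
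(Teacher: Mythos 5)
Your proposal is correct and follows the same route as the paper: a Poincaré inequality for $\GR-\GI_3$ (using \eqref{E0}$_2$ and the boundary condition on $\gamma$), then the splitting $\partial_\alpha\Vc-\Ge_\alpha=(\partial_\alpha\Vc-\GR\Ge_\alpha)+(\GR-\GI_3)\Ge_\alpha$ with \eqref{E0}$_3$, and a second Poincaré inequality for $\Vc-\id$. The only blemish is a stray $\Ge_\alpha$ on the left of your displayed identity (it should read $\partial_\alpha(\Vc-\id)$, not $\partial_\alpha(\Vc-\id)\Ge_\alpha$), but the argument is otherwise identical to the paper's.
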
	

\begin{proof}
	Estimate  \eqref{E0}$_4$, the boundary conditions and Poincar\'e inequality give \eqref{FinEst}$_1$. Then, from \eqref{FinEst}$_1$ and \eqref{E0}$_4$ we obtain
	$$\|\partial_\alpha\Vc-\Ge_\alpha\|_{L^2(\o)}\leq {C\over \e^{3/2}}\GD(v).
	$$
	The above and  the boundary conditions on $\Vc$ one obtains \eqref{FinEst}$_{2}$.
\end{proof}	
From, the above inequalities, the non-linear Korn-type inequality is obtained for the composite plate $\O_\e$.
\begin{lemma}[Theorem 4.2 and Theorem 4.3 \cite{Shell1}]\label{KornB1}
	The corresponding displacement $u=v-\id$ satisfies
	\begin{equation}\label{EQ448} 
		\|u\|_{H^1(\O_\e)}\leq{C\over\e}\GD(v).
	\end{equation}
	The strain tensor satisfies
	\begin{equation} \label{Eq II.3.10}
		\|e(u)\|_{L^2(\O_\e)} \leq  C_0 \left(\GD(v) + {1 \over \e^{5/2}}\GD(v)^2\right).
	\end{equation}
	The constants do not depend on $\e$.
\end{lemma}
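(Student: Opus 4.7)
The plan is to derive both estimates using the plate decomposition $v = V_e + \overline v$ with $V_e(x) = \Vc(x') + x_3 \GR(x')\Ge_3$ furnished by Lemma \ref{Lem01}, together with the estimates \eqref{E0} and Lemma \ref{FinEst1}. The key structural ingredient is that, by construction, $\GR(x')\in\SO(3)$ for almost every $x'\in\o$, so $\GR\GR^T=\GI_3$.

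For \eqref{EQ448}, I would split
\begin{equation*}
u = v-\id = (\Vc-\id_2) + x_3(\GR-\GI_3)\Ge_3 + \overline v
\end{equation*}
and estimate each summand in $L^2(\O_\e)$. Because $\Vc-\id_2$ and $\GR-\GI_3$ are independent of $x_3$, their $L^2(\O_\e)$-norms equal $\sqrt{2\e}$ times the corresponding $L^2(\o)$-norms; the $H^1(\o)$-bounds of order $\e^{-3/2}\GD(v)$ from Lemma \ref{FinEst1} then yield contributions of order $\e^{-1}\GD(v)$. The middle term picks up an extra $x_3$ factor and contributes only $O(\GD(v))$, while $\overline v$ contributes $O(\e\GD(v))$ by \eqref{E0}$_1$. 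For $\nabla u=(\nabla v-\GR)+(\GR-\GI_3)$, the first summand is $O(\GD(v))$ by \eqref{E0}$_1$ and the second is $O(\e^{-1}\GD(v))$ as above. Summing gives $\|u\|_{H^1(\O_\e)}\le C\e^{-1}\GD(v)$.

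For \eqref{Eq II.3.10}, I would exploit linearity of $\sy$ to write
\begin{equation*}
e(u) = \sy(\nabla v - \GR) + \sy(\GR - \GI_3).
\end{equation*}
The first summand satisfies $\|\sy(\nabla v-\GR)\|_{L^2(\O_\e)}\le \|\nabla v-\GR\|_{L^2(\O_\e)}\le C\GD(v)$ directly from \eqref{E0}$_1$. The second summand carries the nonlinear correction: expanding $\bigl(\GI_3+(\GR-\GI_3)\bigr)\bigl(\GI_3+(\GR-\GI_3)\bigr)^T=\GR\GR^T=\GI_3$ gives the key algebraic identity
\begin{equation*}
2\,\sy(\GR-\GI_3) = -(\GR-\GI_3)(\GR-\GI_3)^T,
\end{equation*}
hence $|\sy(\GR-\GI_3)|\le\tfrac12|\GR-\GI_3|^2$ pointwise. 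Since $\GR-\GI_3$ is $x_3$-independent, $\|\GR-\GI_3\|_{L^4(\O_\e)}^4=2\e\|\GR-\GI_3\|_{L^4(\o)}^4$, and the two-dimensional Sobolev embedding $H^1(\o)\hookrightarrow L^4(\o)$ combined with Lemma \ref{FinEst1} gives $\|\GR-\GI_3\|_{L^4(\o)}\le C\e^{-3/2}\GD(v)$. Consequently $\|\sy(\GR-\GI_3)\|_{L^2(\O_\e)}\le C\e^{-5/2}\GD(v)^2$, and adding the two summands yields \eqref{Eq II.3.10}.

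The main obstacle is extracting the sharp power $\e^{-5/2}$ in the quadratic correction. This requires both the $\SO(3)$-cancellation that upgrades the naive linear bound $\e^{-1}\GD(v)$ on $\sy(\GR-\GI_3)$ to the quadratic $\e^{-5/2}\GD(v)^2$, and the deliberate use of two-dimensional Sobolev on the mid-plane $\o$, which picks up the factor $\e^{1/4}$ from the passage $L^4(\o)\to L^4(\O_\e)$ through the thin slab $(-\e,\e)$. Without the algebraic cancellation, the bound would only improve the linear term by a constant factor and fail to close for small $\e$.
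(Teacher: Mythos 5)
The paper does not prove this lemma itself; it simply records it as a cited result from \cite{Shell1} (Theorems 4.2 and 4.3 there), so there is no in-paper argument to compare against. Your reconstruction is nevertheless correct and matches the standard argument from that reference: split off $V_e-\id=(\Vc-\id_2)+x_3(\GR-\GI_3)\Ge_3$ and $\overline v$ for the $H^1$ bound, and split $e(u)=\sy(\nabla v-\GR)+\sy(\GR-\GI_3)$ for the strain bound. The one ingredient you invoke that is not explicitly recorded in the paper's Lemma~\ref{Lem01} is that $\GR(x')\in\SO(3)$ a.e.; this is indeed how the decomposition is constructed in \cite{Shell1} (via Friesecke--James--M\"uller rigidity), and it is essential here, since without the identity $2\,\sy(\GR-\GI_3)=-(\GR-\GI_3)(\GR-\GI_3)^T$ the term $\sy(\GR-\GI_3)$ would only give the linear contribution $O(\e^{-1}\GD(v))$ and the quadratic correction $\e^{-5/2}\GD(v)^2$ would be unreachable. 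Your bookkeeping of the powers of $\e$ (thickness factors $\sqrt{2\e}$ and $(2\e)^{1/4}$, the $x_3$-weight $\e^{3/2}$, and the two-dimensional Sobolev embedding $H^1(\o)\hookrightarrow L^4(\o)$ on the fixed domain $\o$, whose constant is $\e$-independent) is exact, and the estimates \eqref{E0} and Lemma~\ref{FinEst1} you rely on are precisely the ones the paper makes available. The proof is complete.
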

\subsection{Decomposition of plate displacements}
In this subsection, every element $u\in H^1(\O_\e)^3$ is decomposed as the sum of a Kirchhoff-Love displacement and a residual term.\\
Below recalling a result proven in \cite{GKL}.
\begin{lemma}[Theorem 6.1 in \cite{GKL}]\label{DeComKL1} Every displacement belonging to $H^1(\O_\e)^3$ can be decomposed into the sum of a Kirchhoff-Love displacement and a residual term
	\begin{equation}\label{PD1}
		u= U_{KL} + \fu,\qquad \text{  a.e. in }\;\; x\in\O_\e,
	\end{equation} 
	where $U_{KL}$ is a Kirchhoff-Love displacement given by 
	$$
	U_{KL}  = \left(\begin{aligned}
		&\Uc_1 (x') - x_3 {\partial \Uc_3 \over \partial x_1}(x')\\
		&\Uc_2 (x') - x_3 {\partial \Uc_3 \over \partial x_2}(x')\\
		&\hspace{0.9cm}\Uc_3 (x')
	\end{aligned}\right)
	\;\; \text{for a.e.}\;\; x=(x', x_3) \in \O_\e.
	$$
	We have
	\begin{equation}
		\Uc_m=\Uc_1 \Ge_1 +\Uc_2 \Ge_2 \in H^1(\o)^2,\quad \Uc_3\in H^2(\o),\qquad \fu\in H^1(\O_\e)^3
	\end{equation}  and the following estimates
	\begin{equation}\label{PD2}
		\begin{aligned}
			\|e_{\alpha \beta}(\Uc_{m})\|_{L^2(\o)} &\leq {C \over \e^{1/2}} \|e(u)\|_{L^2(\O_\e)},\\
			\|D^2(\Uc_3)\|_{L^2(\o)} &\leq {C \over \e^{3/2} }\|e(u)\|_{L^2(\O_\e)},\\
			\|\fu\|_{L^2(\O_\e)} +\e\|\nabla\fu\|_{L^2(\O_\e)} &\leq C\e \|e(u)\|_{L^2(\O_\e)}.
		\end{aligned}
	\end{equation}
	The constants do not depend on $\e$ and $h$, depends only on $\o$.
\end{lemma}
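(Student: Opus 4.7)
The plan is to construct the Kirchhoff--Love part of $u$ via slicewise $L^2$-projections in $x_3$ onto polynomials of degree at most one, then to verify that the residual $\fu$ inherits the claimed size. First, I would introduce the thickness averages
\begin{equation*}
\Uc_i(x') \doteq {1\over 2\e}\int_{-\e}^{\e} u_i(x',x_3)\,dx_3,\qquad \Gr_i(x') \doteq {3\over 2\e^3}\int_{-\e}^{\e} x_3\, u_i(x',x_3)\,dx_3,\quad i\in\{1,2,3\},
\end{equation*}
so that the Legendre-type splitting $u(x) = \Uc(x') + x_3\,\Gr(x') + r(x)$ holds with $r$ mean-free and first-moment-free in $x_3$ on each vertical fiber.

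Second, I would bound the strain-type quantities of $\Uc$ and $\Gr$ by differentiating the defining integrals in $x'$ and rewriting the integrands through $\partial_\beta u_\alpha = 2e_{\alpha\beta}(u) - \partial_\alpha u_\beta$ and $\partial_3 u_\alpha = 2e_{\alpha 3}(u) - \partial_\alpha u_3$. Averaging $2e_{\alpha\beta}(u)$ in $x_3$ gives $\|e_{\alpha\beta}(\Uc_m)\|_{L^2(\o)} \le C\e^{-1/2}\|e(u)\|_{L^2(\O_\e)}$, and an integration by parts on $(-\e,\e)$ against $\tfrac{1}{2}(x_3^2-\e^2)$ shows that $\Gr_\alpha + \partial_\alpha\Uc_3$ is $L^2$-small of order $\e^{-1/2}\|e(u)\|_{L^2(\O_\e)}$ (the approximate Kirchhoff--Love identity). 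Taking first moments in $x_3$ of $2e_{\alpha\beta}(u)$ produces $\|e_{\alpha\beta}(\Gr')\|_{L^2(\o)}\le C\e^{-3/2}\|e(u)\|_{L^2(\O_\e)}$ for $\Gr' = (\Gr_1,\Gr_2)$; a 2D Korn inequality on $\o$, modulo a skew-constant absorbed into the residual freedom of $\Uc$ via a rigid-motion adjustment, lifts this to $\|\nabla'\Gr'\|_{L^2(\o)}\le C\e^{-3/2}\|e(u)\|_{L^2(\O_\e)}$, and combining with the approximate KL identity yields $\Uc_3\in H^2(\o)$ together with $\|D^2\Uc_3\|_{L^2(\o)} \le C\e^{-3/2}\|e(u)\|_{L^2(\O_\e)}$.

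Third, with these in hand, I would assemble $U_{KL}$ from $\Uc_1, \Uc_2, \Uc_3$ as in the statement and set $\fu \doteq u - U_{KL}$. Explicit expansion gives
\begin{equation*}
\fu = x_3\,(\Gr_\alpha + \partial_\alpha\Uc_3)\,\Ge_\alpha + x_3\,\Gr_3\,\Ge_3 + r,
\end{equation*}
and the elementary inequality $\|x_3\,\varphi(x')\|_{L^2(\O_\e)}\le C\e^{3/2}\|\varphi\|_{L^2(\o)}$ (and its counterpart for the gradient), together with a slicewise Poincar\'e inequality controlling $r$ and $\nabla r$ by $\|e(u)\|_{L^2(\O_\e)}$, then produces $\|\fu\|_{L^2(\O_\e)} + \e\|\nabla\fu\|_{L^2(\O_\e)}\le C\e\|e(u)\|_{L^2(\O_\e)}$.

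The hard part will be the $H^2$-upgrade of $\Uc_3$. Naively $\Uc_3$ inherits only $H^1$-regularity from thickness averaging, and attempting to compute $D^2\Uc_3$ directly through a Saint-Venant-type compatibility identity produces boundary-trace terms of $e(u)$ at $x_3=\pm\e$ that live only in $H^{-1/2}(\o)$. The route above circumvents this by controlling $\nabla'\Gr'$ through the first-moment identity together with the 2D Korn inequality, and transferring this $H^1$-regularity to $\Uc_3$ via the approximate KL identity $\nabla'\Uc_3\approx -\Gr'$; making the exponent $\e^{-3/2}$ sharp in this last step is exactly what is needed so that the $\fu$-estimate in the third step comes out with the correct $\e$-power.
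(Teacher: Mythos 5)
Your construction of $\Uc_3$ as the thickness average ${1\over 2\e}\int_{-\e}^{\e} u_3(\cdot,x_3)\,dx_3$ cannot deliver the $H^2(\o)$-regularity the lemma asserts, and the repair you propose does not close the gap. Take $u(x)=\phi(x')\,\Ge_3$ with $\phi\in H^1(\o)\setminus H^2(\o)$: your fiber projections give $\Uc_3=\phi$, $\Gr\equiv 0$ and $r\equiv 0$, while $\|e(u)\|_{L^2(\O_\e)}=\sqrt{\e}\,\|\nabla'\phi\|_{L^2(\o)}$; the claimed estimate $\|D^2\Uc_3\|_{L^2(\o)}\leq C\e^{-3/2}\|e(u)\|_{L^2(\O_\e)}$ would then read $\|D^2\phi\|_{L^2(\o)}\leq C\e^{-1}\|\nabla'\phi\|_{L^2(\o)}$, which is false since $D^2\phi\notin L^2(\o)$. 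You correctly flag the $H^2$-upgrade of $\Uc_3$ as the crux, but the mechanism you invoke --- transferring the $H^1$-regularity of $\Gr'$ to $\nabla'\Uc_3$ through the approximate Kirchhoff--Love identity $\nabla'\Uc_3\approx -\Gr'$ --- uses an identity that holds only at the $L^2(\o)$ level: writing $g_\alpha=\Gr_\alpha+\partial_\alpha\Uc_3$, one has $\partial_\beta\partial_\alpha\Uc_3=-\partial_\beta\Gr_\alpha+\partial_\beta g_\alpha$ in the distributional sense, and nothing in your argument controls $\partial_\beta g_\alpha$. In the example above $\Gr'\equiv 0$ is trivially smooth, yet $\Uc_3$ remains merely $H^1$.

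The correct strategy keeps $\Gr'$ as the first-moment field --- your Cauchy--Schwarz bound $\|e_{\alpha\beta}(\Gr')\|_{L^2(\o)}\leq C\e^{-3/2}\|e(u)\|_{L^2(\O_\e)}$ and the subsequent 2D Korn step are fine --- but must \emph{construct} $\Uc_3\in H^2(\o)$ by a separate device so that $\nabla'\Uc_3$ tracks $-\Gr'$: for instance solve an auxiliary Poisson problem $\Delta\Uc_3=-\partial_\alpha\Gr_\alpha$ in $\o$ (so $\|D^2\Uc_3\|_{L^2(\o)}\leq C\|\nabla'\Gr'\|_{L^2(\o)}$ by elliptic regularity), or replace the thickness average by a scale-$\e$ regularization in $x'$. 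One then checks that the difference between this constructed $\Uc_3$ and the raw average, together with the mismatch $\Gr'+\nabla'\Uc_3$, is small enough in $L^2(\o)$ to be absorbed into $\fu$ with the advertised $\e$-powers; this is where the first-moment identity you describe actually does its work. The remaining ingredients in your sketch (the Jensen bound for $e_{\alpha\beta}(\Uc_m)$, the $x_3$-weighted $L^2$ estimates for the discrepancy terms in $\fu$, and the fiberwise Poincar\'e control of the doubly-orthogonal residual $r$) are sound as stated.
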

\begin{remark}
	If $u=0$ on $\Gamma_\e$, one has
	\begin{equation}
		\Uc=0,\quad \nabla\Uc_3=0,\quad\text{a.e. on}\quad \gamma,\quad \text{and}\quad \fu=0\quad\text{a.e. on}\quad \Gamma_\e.
	\end{equation}
\end{remark}
The above Lemma and Remark imply,
\begin{lemma}[Korn type inequalities] The following estimates hold for $u\in H^1(\O_\e)^3$ with $u=0$ on $\Gamma_\e$
	\begin{equation}\label{KornPUD1}
		\begin{aligned}
			\|u_1\|_{L^2(\O_\e)}+\|u_2\|_{L^2(\O_\e)}+\e\|u_3\|_{L^2(\O_\e)}&\leq C_1\|e(u)\|_{L^2(\O_\e)},\\
			\sum_{\alpha,\beta=1}^2\left\|{\partial u_\beta\over \partial x_\alpha}\right\|_{L^2(\O_\e)}+\left\|{\partial u_3\over \partial x_3}\right\|_{L^2(\O_\e)}&\leq C\|e(u)\|_{L^2(\O_\e)},\\
			\sum_{\alpha=1}^2\left(\left\|{\partial u_3\over \partial x_\alpha}\right\|_{L^2(\O_\e)}+\left\|{\partial u_\alpha\over \partial x_3}\right\|_{L^2(\O_\e)}\right)&\leq {C\over\e}\|e(u)\|_{L^2(\O_\e)}.
		\end{aligned}
	\end{equation}
	The constants do not depend on $\e$ and $h$.
\end{lemma}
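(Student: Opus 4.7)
The plan is to use the Kirchhoff--Love decomposition $u=U_{KL}+\fu$ from Lemma \ref{DeComKL1}, together with the clamping conditions $\Uc_m=0$, $\Uc_3=0$, $\nabla \Uc_3=0$ on $\gamma$ and $\fu=0$ on $\Gamma_\e$ recorded in the preceding remark. For each norm on the left-hand side of \eqref{KornPUD1}, I would split the estimate into a contribution from $U_{KL}$ (handled by the pointwise formula for $U_{KL}$ combined with Poincar\'e on $\o$) and a contribution from $\fu$ (handled directly by \eqref{PD2}$_3$). The global factors of $\e$ come only from integration in $x_3\in(-\e,\e)$.

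First I would treat \eqref{KornPUD1}$_1$. Writing $u_\alpha=\Uc_\alpha-x_3\partial_\alpha \Uc_3+\fu_\alpha$, the $L^2(\O_\e)$-norm of $\Uc_\alpha$ produces a factor $\sqrt{2\e}$ from the $x_3$-integration; combined with 2D Korn and Poincar\'e on $\o$ (using $\Uc_m|_\gamma=0$) and the bound \eqref{PD2}$_1$, this gives $C\|e(u)\|_{L^2(\O_\e)}$. The term $x_3\partial_\alpha \Uc_3$ contributes a factor $\e^{3/2}$ from $\int_{-\e}^\e x_3^2\,dx_3$, which exactly cancels the $\e^{-3/2}$ coming from \eqref{PD2}$_2$ after applying Poincar\'e to $\nabla \Uc_3$ (using $\nabla \Uc_3|_\gamma=0$). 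The $\fu_\alpha$-term is absorbed by \eqref{PD2}$_3$. For $\e\|u_3\|_{L^2(\O_\e)}$ with $u_3=\Uc_3+\fu_3$, applying Poincar\'e twice to $\Uc_3$ yields $\|\Uc_3\|_{L^2(\o)}\leq C\|D^2\Uc_3\|_{L^2(\o)}\leq C\e^{-3/2}\|e(u)\|_{L^2(\O_\e)}$, and the factor $\e\sqrt{2\e}=\sqrt2\e^{3/2}$ from the $x_3$-integration precisely compensates, while the $\fu_3$-term is handled by \eqref{PD2}$_3$ together with $\e\le 1$.

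For \eqref{KornPUD1}$_2$, differentiating the decomposition I would control $\partial_\alpha u_\beta=\partial_\alpha \Uc_\beta-x_3\partial^2_{\alpha\beta}\Uc_3+\partial_\alpha \fu_\beta$: the first piece gives $\sqrt{2\e}\cdot C\e^{-1/2}$ from \eqref{PD2}$_1$, the second gives $C\e^{3/2}\cdot C\e^{-3/2}$ from \eqref{PD2}$_2$, and the third is bounded by \eqref{PD2}$_3$. The term $\partial_3 u_3$ collapses to $\partial_3 \fu_3$ since $U_{KL,3}=\Uc_3$ is independent of $x_3$, and is again controlled by \eqref{PD2}$_3$. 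For \eqref{KornPUD1}$_3$, both $\partial_\alpha u_3=\partial_\alpha \Uc_3+\partial_\alpha \fu_3$ and $\partial_3 u_\alpha=-\partial_\alpha \Uc_3+\partial_3 \fu_\alpha$ contain $\partial_\alpha \Uc_3$, whose norm over $\O_\e$ is of order $\sqrt{\e}\cdot \e^{-3/2}=\e^{-1}$; this sets the scale $1/\e$ of the inequality, while the $\fu$-contributions, of order $1$ by \eqref{PD2}$_3$, are absorbed since $\e\le 1$.

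I do not foresee a serious obstacle: the argument is essentially a careful accounting of powers of $\e$ arising from the $x_3$-integration versus the $\e$-scaling in \eqref{PD2}. The only point that requires care is the use of Poincar\'e (once for $\Uc_m$ and $\nabla \Uc_3$, twice for $\Uc_3$) and of the 2D Korn inequality on $\o$ for $\Uc_m$, all of which rest on the clamping conditions on $\gamma$ stated in the remark preceding the lemma; without these boundary conditions the $L^2$-estimates \eqref{KornPUD1}$_1$ would fail.
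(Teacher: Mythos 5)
Your argument is correct and is exactly the route the paper intends (the paper simply states ``The above Lemma and Remark imply'' without writing out the details): decompose $u$ via Lemma~\ref{DeComKL1}, invoke the clamping conditions from the preceding remark to activate two-dimensional Korn for $\Uc_m$ and Poincar\'e for $\Uc_3$, $\nabla\Uc_3$ on $\o$, and track the powers of $\e$ from $\int_{-\e}^{\e}dx_3$ and $\int_{-\e}^{\e}x_3^2\,dx_3$ against those in \eqref{PD2}. Your bookkeeping of the $\e$-factors for each of the three inequalities checks out, including the observation that $\partial_3 U_{KL,3}=0$ so the $\partial_3 u_3$-term reduces to $\partial_3\fu_3$, and that the $1/\e$ scale of \eqref{KornPUD1}$_3$ is set by $\|\partial_\alpha\Uc_3\|_{L^2(\O_\e)}\sim\sqrt{\e}\cdot\e^{-3/2}$.
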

\subsection{The unfolding operators}
The convergences are done via the re-scaling unfolding operator $\Pi_\e$ for homogenization and dimension reduction in $\O_\e$ and the unfolding operator $\Te$ for homogenization in $\o$.
Below recalling the definition of the periodic unfolding re-scaling operator and the unfolding operator for functions defined in  $\O_\e$ and $\o$, respectively. For the properties of the operators see \cite{CDG}.
\begin{definition}
	For every measurable function $\psi$ on $\O_\e$ the unfolding rescaling operator $\Pi_\e$ is defined by 
	\begin{equation}\label{RUO1}
		\Pi_\e(\psi) (x',y) \doteq \left\{\begin{aligned}
			&\psi \left(\e\left[{x'\over \e}\right] + \e y\right) \qquad &&\hbox{for a.e. }\; (x',y)\in  \wh{\o}_\e\times \Yc,\\
			&0\qquad&&\hbox{for a.e. }\; (x',y)\in  \Lambda_\e\times \Yc.
		\end{aligned}\right.
	\end{equation} 
	For every measurable function $\phi$ on $\o$ the unfolding operator $\Te$ is defined by 
	$$ \Te(\phi) (x',y') \doteq \left\{\begin{aligned}
		&\psi \left(\e\left[{x'\over \e}\right] + \e y'\right) \qquad &&\hbox{for a.e. }\; (x',y')\in  \wh{\o}_\e\times Y,\\
		&0\qquad&&\hbox{for a.e. }\; (x',y')\in  \Lambda_\e\times Y.
	\end{aligned}\right.$$
\end{definition} 
The re-scaling unfolding operator $\Pi_\e$ is a  continuous linear operator  from $L^2(\O_\e)$ into $L^2(\o\X \Yc)$ which satisfies
\begin{equation}\label{EQ722}
	\|\Pi_\e(\psi)\|_{L^2(\o\X\Yc)}\leq{C\over \sqrt{\e}}\|\psi\|_{L^2(\O_\e)}\quad \text{ for every}\quad \psi\in L^2(\O_\e),
\end{equation}
and the unfolding operator is also a continuous linear operator from $L^2(\o)$ into $L^2(\o\X Y)$ which satisfies
$$\|\Te(\phi)\|_{L^2(\o\X Y)}\leq C\|\phi\|_{L^2(\o)}\quad\text{for every}\quad \phi\in L^2(\o).$$
The constants $C$ do not depend on $\e,h$.
\begin{remark}
	Here, note that for functions defined in $\o$, one has
	$$\Pi_\e(\phi)(x',y)=\Te(\phi)(x',y')=\phi \left(\e\left[{x'\over \e}\right] + \e y'\right)\quad \text{for every}\quad \phi\in L^2(\o).$$
\end{remark}
Moreover, for every $\psi\in H^1(\O_\e)$ one has (see \cite[Proposition 1.35]{CDG})
$$ \nabla_y\Pi_\e(\psi)(x',y)=\e\Pi_\e(\nabla \psi)(x',y) \quad \hbox{a.e. in}\quad \o\X\Yc.$$
The lemma below is a consequence of Proposition 1.12, Theorem 1.36, Corollary 1.37 and Proposition 1.39 in \cite{CDG}.
\begin{lemma}\label{A11}
	$ $
	\begin{enumerate}
		\item Let $\{w_\e\}_\e$ be a weakly convergent sequence in $L^2(\o)$ with weak limit $w\in L^2(\o)$ then, there exist a $\wh{w}\in L^2(\o\X Y)$ with $\Mc_{Y}(\wh{w})=0$ such that  upto a subsequence
		$$\Te(w_\e)\rightharpoonup w+\wh{w},\quad \text{weakly in $L^2(\o\X Y)$}.$$
		\item Let $\{v_\e\}_\e$ be a weakly in convergent sequence in $H^1(\o)$ with weak limit $v\in H^1(\o)$. Then, there exist a $\wh{v}\in L^2(\o;H^1_{per,0}(Y))$ such that  upto a subsequence
		$$\begin{aligned}
			\Te(w_\e)&\to v,\quad &&\text{strongly in $L^2(\o\X Y)$},\\
			\Te(\nabla v_\e)&\rightharpoonup \nabla v+\nabla_y \wh{v}\quad&&\text{weakly in $L^2(\o\X Y)$}.
		\end{aligned}$$
	\end{enumerate}
\end{lemma}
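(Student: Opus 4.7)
\textbf{Proof proposal for Lemma \ref{A11}.}

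The plan is to work in two stages for each item: (i) extract a candidate weak limit from the boundedness of the unfolding operator, and (ii) identify its structure using the mean-value property of $\Te$ together with the differential identity relating $\nabla_{y'}\Te$ and $\e\,\Te(\nabla\cdot)$.

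For item 1, I would first invoke the continuity estimate $\|\Te(w_\e)\|_{L^2(\o\X Y)}\le C\|w_\e\|_{L^2(\o)}$ to conclude that $\Te(w_\e)$ is bounded in $L^2(\o\X Y)$ and therefore admits, along a subsequence, a weak limit $W\in L^2(\o\X Y)$. To identify $\Mc_Y(W)$, I would test against an arbitrary $\phi\in\Cc^\infty_c(\o)$ independent of $y'$ and use the basic integration identity (a direct consequence of the definition of $\Te$)
\begin{equation*}
\int_{\o\X Y}\Te(w_\e)(x',y')\,\phi(x')\,dx'\,dy'\;=\;|Y|\int_{\wh\o_\e} w_\e(x')\phi(x')\,dx'.
\end{equation*}
Passing to the limit, using $w_\e\rightharpoonup w$ in $L^2(\o)$ and $|\o\setminus\wh\o_\e|\to 0$, the right-hand side converges to $|Y|\int_\o w\phi\,dx'$, which forces $\int_Y W(x',y')\,dy'=|Y|\,w(x')$ a.e.\ in $\o$. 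Setting $\wh w\doteq W-w$ yields $\Mc_Y(\wh w)=0$ and the claimed decomposition.

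For item 2, the same compactness extraction gives (along a subsequence) $\Te(v_\e)\wc V$ and $\Te(\nabla v_\e)\wc H$ weakly in $L^2(\o\X Y)$. Using the key identity $\nabla_{y'}\Te(v_\e)=\e\,\Te(\nabla v_\e)$ one has $\|\nabla_{y'}\Te(v_\e)\|_{L^2(\o\X Y)}=O(\e)$, so $\nabla_{y'}V=0$ and $V$ is independent of $y'$; combined with the mean-value identification from item 1, this gives $V=v$. Strong convergence $\Te(v_\e)\to v$ in $L^2(\o\X Y)$ then follows from Rellich compactness $v_\e\to v$ in $L^2(\o)$ together with $\|\Te(v_\e)-v\|_{L^2(\o\X Y)}^2 \le C\|v_\e-v\|_{L^2(\o)}^2 + \|\Te(v)-v\|_{L^2(\o\X Y)}^2$ and the approximation $\Te(v)\to v$ in $L^2(\o\X Y)$ for fixed $v\in L^2(\o)$.

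It remains to determine the $y'$-dependent part of $H$, which is the main obstacle. I would apply item 1 componentwise to $\nabla v_\e$ to write $H=\nabla v+\wh H$ with $\Mc_Y(\wh H)=0$, and then show that $\wh H=\nabla_{y'}\wh v$ for some $\wh v\in L^2(\o; H^1_{per,0}(Y))$. To do this, I would test against smooth, $Y$-periodic, compactly (in $\o$) supported vector fields $\Psi(x',y')$ satisfying $\mathrm{div}_{y'}\Psi=0$ and $\int_Y\Psi(x',y')\,dy'=0$: integrating by parts in $y'$ together with the unfolding-integration identity gives
\begin{equation*}
\int_{\o\X Y}\Te(\nabla v_\e)\cdot\Psi\,dx'\,dy'\;=\;\e\int_{\o\X Y}\Te(v_\e)\,\mathrm{div}_{y'}\Psi\,dx'\,dy'\;+\;O(\e)\;=\;O(\e),
\end{equation*}
so that $\int_{\o\X Y}\wh H\cdot\Psi\,dx'dy'=0$ for every such $\Psi$. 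A De Rham-type argument in the fiber $Y$ then yields $\wh H=\nabla_{y'}\wh v$ for some $\wh v\in L^2(\o; H^1_{per}(Y))$, and subtracting the $y'$-average normalizes $\wh v\in L^2(\o; H^1_{per,0}(Y))$, completing the identification. The delicate point — where I expect to spend the most care — is this last De Rham step and the correct handling of the boundary strip $\Lambda_\e$ so that test functions compactly supported in $\o$ suffice to determine $\wh H$.
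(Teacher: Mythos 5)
The paper does not supply a proof of this lemma: it is deduced directly from Proposition 1.12, Theorem 1.36, Corollary 1.37 and Proposition 1.39 of the cited monograph \cite{CDG}, so there is no in-paper argument to compare against. Your sketch follows the standard two-scale/unfolding compactness route, which is indeed the same family of arguments used in \cite{CDG}, but as written it has two problems, one cosmetic and one real.

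The cosmetic one is in item 1: the exact unfolding-integration identity is
\begin{equation*}
\int_{\o\X Y}\Te(w_\e)\,\Te(\phi)\,dx'\,dy'\;=\;|Y|\int_{\wh\o_\e} w_\e\,\phi\,dx',
\end{equation*}
not the version with $\phi(x')$ in place of $\Te(\phi)$ (since $\Te(w_\e)$ is piecewise constant in $x'$ on cells while $\phi$ is not). Because $\Te(\phi)\to\phi$ uniformly for smooth $\phi$, the conclusion $\Mc_Y(W)=w$ still holds, but the identity you wrote is only approximate.

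The real gap is in item 2, in the integration-by-parts step. First, the scaling is wrong: from $\nabla_{y'}\Te(v_\e)=\e\,\Te(\nabla v_\e)$ one gets $\Te(\nabla v_\e)=\e^{-1}\nabla_{y'}\Te(v_\e)$, so after integration by parts the prefactor should be $\e^{-1}$, not $\e$. Second, and more importantly, $\Te(v_\e)(x',\cdot)$ is \emph{not} $Y$-periodic in $y'$ (the unfolded values on opposite faces of $Y$ come from opposite edges of the cell and need not match), so the $y'$-integration by parts produces boundary terms on $\partial Y$ that do not cancel by periodicity. After dividing by $\e$, you need those boundary terms to be $o(\e)$; this is exactly the content you would have to prove, and the bare estimate only gives $O(\e)$ from the boundedness of the left-hand side — not enough to conclude $\int_{\o\X Y}\wh H\cdot\Psi=0$. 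A clean repair is to avoid integrating by parts in the unfolded variables: take the oscillating test field $\Psi_\e(x')=\Psi(x',\{x'/\e\})$ in physical variables (for smooth $\Psi$ compactly supported in $x'$, $Y$-periodic and divergence-free in $y'$), observe that $\mathrm{div}_{x'}\Psi_\e$ stays bounded because the $\e^{-1}\,\mathrm{div}_{y'}\Psi$ term drops out, integrate by parts in $x'$ over $\o$, and then use the exact unfolding identity together with $v_\e\to v$ strongly (Rellich) to pass to the limit. The alternative route is the Poincar\'e--Wirtinger argument used in \cite{CDG}: set $Z_\e=\e^{-1}\big(\Te(v_\e)-\Mc_Y(\Te(v_\e))\big)$, which is bounded in $L^2(\o;H^1(Y))$ with $\nabla_{y'}Z_\e=\Te(\nabla v_\e)$, extract the weak limit and identify $\nabla_{y'}$ of it with $H$, and then prove separately that, after subtracting $y'\cdot\nabla v$ and centering, the corrector is $Y$-periodic — a step that again requires care with the interfacial traces. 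Either way, the ``$+\,O(\e)$'' you wrote is precisely where the work is, and the periodicity issue is on $\partial Y$, not merely in the boundary strip $\Lambda_\e$ as you suggest.
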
	
\section{Assumption of the right-hand side forces}\label{Sec06}
%{\clb the linear case corresponds to $a\in (1,+\infty)$}\\
We recall that the forces $f_{\e,h}$ are given by (for $a\geq 1$ and $(h,\e)\in (0,1]^2$)
\begin{equation}\label{AFB1}
	f_{\e,h}(x)=h f_\e(x)=h\big(\e^{a+1}f_\alpha(x')\Ge_\alpha+\e^{a+2} f_3(x')\Ge_3\big),\quad x\in\O_\e,\quad f\in [L^2(\o)]^3.
\end{equation}
\begin{assumption}\label{AssS01}
	We assume that the applied forces satisfy the following:
	\begin{equation}\label{Eq74}
		\|f\|_{L^2(\o)}\leq {K_1\over 2C_0 C_1},
	\end{equation}
	where $K_1$, $C_0$ and $C_1$ comes from the estimates \eqref{eq64}, \eqref{Eq II.3.10} and \eqref{KornPUD1}$_1$ respectively.
\end{assumption}
\begin{lemma}\label{L7}
	Let $v\in \GV_{\e}$ be a deformation such that $\GJ_{\e,h}(v)\leq\GJ_{\e,h}(\id)$. Assuming the applied forces \eqref{AFB1} fulfill \eqref{Eq74} and the parameters $h$ and $\e$ satisfy \eqref{AssS02}, then one has  for $a\geq 1$
	\begin{equation}\label{EQ86}
		\begin{aligned}
			\GD(v)+\|e(u)\|_{L^2(\O_\e)}&\leq Ch\e^{a+3/2},\\
			\|(\nabla v)^T(\nabla v)-\GI_3\|_{L^2(\O_\e)}&\leq Ch\e^{a+3/2},
		\end{aligned}
	\end{equation}	
	where $v=\id+u$.
	The constants do not depend on $\e$ and $h$.
\end{lemma}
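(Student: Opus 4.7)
The starting point is the observation that $\GJ_{\e,h}(\id)=0$, since $\wh W_\e(x,\GI_3)=\GQ(x/\e,0)=0$ in \eqref{Eq62} and $\id-\id=0$. Hence the hypothesis $\GJ_{\e,h}(v)\le \GJ_{\e,h}(\id)$ reads
$$
\int_{\O_\e}\wh W_\e(x,\nabla v)\,dx\;\le\; h\int_{\O_\e}f_\e\cdot u\,dx, \qquad u=v-\id.
$$
I would then coerce the left-hand side from below using \eqref{eq64} together with \eqref{SymCon}, which gives $K_1\GD(v)^2\le K_1\|(\nabla v)^T\nabla v-\GI_3\|^2_{L^2(\O_\e)}\le \int_{\O_\e}\wh W_\e(x,\nabla v)\,dx$. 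Estimating the second inequality in \eqref{EQ86} will then reduce to estimating the right-hand force term in terms of $\GD(v)$.

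For the right-hand side, I exploit the $x_3$-independence of $f$: extending $f_k$ constantly across the thickness gives $\|f_k\|_{L^2(\O_\e)}=\sqrt{2\e}\|f_k\|_{L^2(\o)}$. Using the scaling \eqref{AFB1} and the linear Korn estimates \eqref{KornPUD1}$_1$ (which absorb the extra factor of $\e$ in front of $u_3$), I obtain
$$
\Big|\int_{\O_\e}f_\e\cdot u\,dx\Big|\;\le\;\sqrt{2}\,\e^{a+3/2}\|f\|_{L^2(\o)}\,\bigl(\|u_1\|_{L^2(\O_\e)}+\|u_2\|_{L^2(\O_\e)}+\e\|u_3\|_{L^2(\O_\e)}\bigr)\;\le\;\sqrt{2}\,C_1\e^{a+3/2}\|f\|_{L^2(\o)}\|e(u)\|_{L^2(\O_\e)}.
$$
I then insert the nonlinear Korn bound \eqref{Eq II.3.10}, $\|e(u)\|_{L^2(\O_\e)}\le C_0(\GD(v)+\e^{-5/2}\GD(v)^2)$, and apply the smallness assumption \eqref{Eq74}, $\|f\|_{L^2(\o)}\le K_1/(2C_0C_1)$. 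Combining everything yields the scalar inequality
$$
K_1\GD(v)^2\;\le\;\frac{K_1}{\sqrt{2}}\,h\e^{a+3/2}\bigl(\GD(v)+\e^{-5/2}\GD(v)^2\bigr).
$$

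Dividing by $\GD(v)$ (the case $\GD(v)=0$ being trivial) gives
$$
K_1\GD(v)\;\le\;\frac{K_1}{\sqrt{2}}\,h\e^{a+3/2}+\frac{K_1}{\sqrt{2}}\,h\e^{a-1}\GD(v),
$$
and this is where assumption \eqref{AssS02} on the joint smallness of $h$ and $\e$ is used: provided $h\e^{a-1}$ is small enough (say, bounded by $1/\sqrt{2}$), the second term on the right is absorbed into the left-hand side, producing $\GD(v)\le Ch\e^{a+3/2}$. Feeding this back into \eqref{Eq II.3.10} gives $\|e(u)\|_{L^2(\O_\e)}\le Ch\e^{a+3/2}$ because the nonlinear correction is again a factor $h\e^{a-1}$ smaller than the linear one, and this settles \eqref{EQ86}$_1$. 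The second estimate \eqref{EQ86}$_2$ follows by revisiting the very first chain: $K_1\|(\nabla v)^T\nabla v-\GI_3\|_{L^2(\O_\e)}^2\le h\int_{\O_\e}f_\e\cdot u\,dx\le Ch^2\e^{2a+3}$.

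The main obstacle — and really the only nontrivial point — is the bootstrap/absorption step. The nonlinear Korn inequality carries a quadratic remainder scaled by $\e^{-5/2}$, and without the smallness condition \eqref{AssS02} this term would dominate and prevent closing the self-consistent bound on $\GD(v)$. Everything else is a careful but routine tracking of the $\e$- and $h$-powers through the scaled force, the linear Korn inequalities, and the coercivity of $\GQ$.
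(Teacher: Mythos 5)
Your proposal is correct and follows essentially the same route as the paper's proof: coercivity \eqref{eq64} plus the rigidity estimate \eqref{SymCon} to bound $\GD(v)^2$ by the elastic energy, Cauchy--Schwarz on the force term combined with the Korn estimates \eqref{KornPUD1}$_1$ and \eqref{Eq II.3.10}, and then absorption of the quadratic remainder under the smallness conditions \eqref{Eq74} and \eqref{AssS02}, feeding $\GD(v)\leq Ch\e^{a+3/2}$ back into \eqref{Eq II.3.10} for $e(u)$ and into the energy chain for $(\nabla v)^T\nabla v-\GI_3$. Your explicit tracking of the $\sqrt{2}$ factor from $\|f_k\|_{L^2(\O_\e)}=\sqrt{2\e}\,\|f_k\|_{L^2(\o)}$ is slightly more careful than the paper's bookkeeping, but the absorption still closes under $h\e^{a-1}\leq 1$, so the only difference is a cosmetic constant.
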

\begin{proof}
	Observe that $\GJ_{\e,h}(v)\leq \GJ_{\e,h}(\id)=0$, which imply 
	$$\int_{\O_\e}\wh{W}_\e(x,\nabla v)\,dx\leq \int_{\O_\e}f_{\e,h}\cdot(v-\id)\,dx.$$
	Using the inequalities \eqref{eq64}--\eqref{SymCon} and the Lemma  \ref{KornB1}, we obtain the following estimate
	$$
	K_1\GD(v)^2\leq\int_{\O_\e}\wh W_\e(x,\nabla v)\,dx\leq \left|\int_{\O_\e} f_{\e,h}\cdot(v-\id)\, dx\right|.
	$$
	 	The applied forces \eqref{AFB1} and the estimate \eqref{KornPUD1}$_{1}$ imply
	$$\begin{aligned}
	\left|\int_{\O_\e} f_{\e,h}\cdot(v-\id)\, dx\right| =	h\left|\int_{\O_\e} f_\e\cdot u\, dx\right| &\leq h\e^{{a+3/2}} \|f\|_{L^2(\o)}\left(\|u_1\|_{L^2(\O_\e)}+\|u_2\|_{L^2(\O_\e)}+\e\|u_3\|_{L^2(\O_\e)}\right)\\
		&\leq C_1h\e^{a+3/2}\|f\|_{L^2(\o)}\|e(u)\|_{L^2(\O_\e)}.
	\end{aligned}$$
	Then, from the above inequality and \eqref{Eq II.3.10} we obtain 
	\begin{equation*}
		\left|\int_{\O_\e} f_{\e,h}\cdot (v-\id)\, dx\right| \leq C_1h\|f\|_{L^2(\o)}C_0\big(\e^{a+3/2}\GD(v)+\e^{a-1}\GD(v)^2\big).
	\end{equation*}
Thus
	\begin{equation*}
		K_1\GD(v)^2 \leq C_1C_0\|f\|_{L^2(\o)}\big(h\e^{a+3/2}\GD(v)+h\e^{a-1}\GD(v)^2\big).
	\end{equation*}
Since $a\geq 1$, observe that the two positive parameters $(h,\e) \in (0,1]^2$ satisfy the following inequality:
	\begin{equation}\label{AssS02}
		h\e^{a-1}\leq 1.
	\end{equation}
	So, the above inequality along with assumption forces \eqref{Eq74} give
%	 At this point, we can assume that there exists a constant $r_0$ (independent of $h$ and $\e$) strictly positive such that 
	\begin{equation*}
		{C_1C_0 \over K_1}\|f\|_{L^2(\o)}\leq  {1\over 2},
	\end{equation*} and 
which in-turn give
	$$
	{K_1\over 2}\GD(v)^2\leq C_1C_0h\e^{a+3/2}\GD(v)\leq C_1C_0 \e^{5/2}\GD(v),
	$$
	which imply
	$$\GD(v)\leq Ch\e^{a+3/2}\leq C\e^{5/2}.$$
	Hence, for the corresponding displacement, we obtain
	$$
	\|e(u)\|_{L^2(\O_\e)}\leq C_0\GD(v)+{C_0\over \e^{5/2}}\GD(v)^2\leq Ch\e^{a+3/2}+Ch^2\e^{2a+1/2}\leq Ch\e^{{a+3/2}}\leq C\e^{5/2}.
	$$ The constant does not depend on $h$ and $\e$. So, we have \eqref{EQ86}$_{1,2}$.\\
	The above estimates imply that
	\begin{equation*}
			\left \|\left(\nabla v\right)^T(\nabla v)-\GI_3\right\|^2_{L^2(\O_\e)}
		\leq \int_{\O_\e} \wh{W}_{\e}\big(x,\nabla v\big)\,dx\leq \int_{\O_\e} f_{\e,h}\cdot(v-\id)\,dx \leq Ch^2\e^{2a+3},
	\end{equation*}
	which in turn \eqref{EQ86}$_3$.
	This completes the proof.
\end{proof}
As a consequence, there exists constants $c_0$ strictly positive and independent of $\e,h$ such that 
\begin{equation}
	-c_0h^2\e^{2a+3} \leq \GJ_{\e,h}(v) \leq \GJ_{\e,h}(\id)=0.
\end{equation}
Recalling that $m_{\e,h}=\inf_{v\in \GV_{\e}} \GJ_{\e,h}(v)$ yields 
\begin{equation}\label{Req65}
	-c_0\leq {m_{\e,h}\over h^2\e^{2a+3}} \leq 0.
\end{equation}
 The constant does not depend on $h$ and $\e$.\\[1mm]
{\it \large  So, from now on, we chose $a=1$ the case for $a>1$ is analogous.}\\[1mm]
Two problems of asymptotic analysis are considered:  
\begin{itemize}
	\item Simultaneous homogenization, linearization and dimension reduction (SHDL):
	Asymptotic behavior of the re-scale sequence $\ds\left\{{m_{\e,h} \over h^2\e^{5}}\right\}_{\e,h}$ as $(\e,h)\to (0,0)$ at a time and to characterize its limit as a minimum of the functional.
	\item {\bf First Step} Linearization ($h\to0$ and $\e$ is fixed):
	Asymptotic behavior of the re-scale sequence $\ds\left\{{m_{\e,h} \over h^2 \e^{5}}\right\}_h$ as $h \to 0$ and to characterize its limit as a minimum of the functional.\\
	{\bf Second step} Simultaneous homogenization and dimension reduction (SHD): 	Asymptotic behavior of the re-scale sequence $\ds\left\{{m_{\e} \over \e^{5}}\right\}_\e$ as $\e \to 0$ and to characterize its limit as a minimum of the functional.
\end{itemize}
\section{Simultaneous linearization, homogenization and dimension reduction}\label{Sec08}
In this section, the asymptotic analysis of the sequence $\ds\left\{m_{\e,h}\over h^2\e^{5}\right\}_{\e,h}$ as both $\e,h$ tends to zero together is presented.
\subsection{Asymptotic behavior of a sequence of re-scaled displacements as $(\e,h)\to (0,0)$}
A sequence $\{v_{h,\e}\}_{\e,h}$ of deformations in $\GV_\e$ satisfying
\begin{equation}
	\GD(v_{h,\e})\leq Ch\e^{{5\over 2}},
\end{equation}
is considered.
We set
$$\Gu_{h,\e}={v_{h,\e}-\id\over h}.$$
Then we have using \eqref{EQ86}
\begin{equation}\label{AFLP11}
	\|e(\Gu_{h,\e})\|_{L^2(\O_\e)}\leq C\e^{{5\over2}},
\end{equation}
The asymptotic behavior of the sequence $\left\{(\nabla v_{h,\e})^T(\nabla v_{h,\e})-\GI_3\right\}_{\e,h}$ is provided below.
We have the following identity:
\begin{equation}\label{MainSeq2}
	{\small \begin{aligned}
	{1\over h}(\nabla v_{h,\e})^T(\nabla v_{h,\e})-\GI_3=	{1\over 2h}\Big(\GI_3+h\nabla \Gu_{h,\e}\Big)^T\Big(\GI_3+h\nabla \Gu_{h,\e}\Big)-\GI_3=e(\Gu_{h,\e})+{h\over 2}(\nabla\Gu_{h,\e})^T(\nabla\Gu_{h,\e}).
	\end{aligned}}
\end{equation}
Now, using the Lemma \ref{DeComKL1}, we decompose $\Gu_{h,\e}$ and using the estimates \eqref{PD2}, \eqref{KornPUD1} and \eqref{AFLP11}, one has
\begin{equation}\label{EWAgu1}
	\begin{aligned}
		\|e_{\alpha \beta}(\Uc_{m,h,\e})\|_{L^2(\o)} &\leq C\e^{2},\\
		\|D^2(\Uc_{3,h,\e})\|_{L^2(\o)}&\leq C\e,\\
		\|\fu_{h,\e}\|_{L^2(\O_\e)} +\e\|\nabla\fu_{h,\e}\|_{L^2(\O_\e)} &\leq C\e^{{7\over 2}},\\
		\|\Gu_{1,h,\e}\|_{L^2(\O_\e)}+\|\Gu_{2,h,\e}\|_{L^2(\O_\e)}+\e\|\Gu_{3,h,\e}\|_{L^2(\O_\e)}&\leq C\e^{{5\over2}},\\
		\|\nabla\Gu_{h,\e}\|_{L^2(\O_\e)} &\leq C\e^{{3\over 2}}.
	\end{aligned}
\end{equation}
Then, from the definition of unfolding operator along with the above estimates give
\begin{equation}\label{EWRUO11}
	\begin{aligned}
		\|\Pi_\e(\fu_{h,\e})\|_{L^2(\o\X\Yc)} +\|\nabla_y\Pi_\e(\fu_{h,\e})\|_{L^2(\o\X\Yc)}&\leq C\e^{3},\\
		\|\Pi_\e(\Gu_{1,h,\e})\|_{L^2(\o\X\Yc)}+\|\Pi_\e(\Gu_{2,h,\e})\|_{L^2(\o\X\Yc)}+\e\|\Pi_\e(\Gu_{3,h,\e})\|_{L^2(\o\X\Yc)}&\leq C\e^{2},\\
		\|\Pi_\e(\nabla\Gu_{h,\e})\|_{L^2(\o\X\Yc)}&\leq C\e.
	\end{aligned}
\end{equation}
The constant(s) are independent of $\e,h$ and only depends on $\o$.\\
The following convergences are obtained as $(\e,h)\to (0,0)$ simultaneously.
\begin{lemma}\label{CFuu11}
	There exist a subsequence of $\{h,\e\}$, still denoted by $\{h,\e\}$, and $\Uc_m=\Uc_1\Ge_1+\Uc_2\Ge_2 \in H^1(\o)^2$ and $\Uc_3\in H^2(\o)$ such that
	\begin{equation}\label{CFU11}
		\begin{aligned}
			{1\over \e^{2}}\Uc_{m,h,\e}&\rightharpoonup \Uc_m\quad \text{weakly in}\quad H^1(\o)^2\quad\hbox{and strongly in}\quad L^2(\o)^2,\\
			{1\over \e}\Uc_{3,h,\e}&\rightharpoonup \Uc_3\quad \text{weakly in}\quad H^2(\o)\quad\hbox{and strongly in}\quad H^1(\o). 
		\end{aligned}	
	\end{equation}
	$\Uc_m$, $\Uc_3$ satisfy the following boundary conditions:
	$$ \Uc_m=0\quad \text{a.e. on}\quad \gamma,\quad \Uc_3=\nabla \Uc_3=0\quad\text{a.e. on}\;\; \gamma.$$
		Moreover, there exist $\wh{U}_m\in L^2(\o;H^1_{per,0}(Y))^2$ and $\wh{U}_3\in L^2(\o;H^2_{per,0}(Y))$  such that
	\begin{equation}\label{CFfu11}
		\begin{aligned}
			{1\over \e^{2}}\Te(e_{\alpha\beta}\Uc_{m,h,\e})&\rightharpoonup e_{\alpha\beta}(\Uc_m)+e_{\alpha\beta,y'}(\wh{U}_m),\quad&&\text{weakly in $L^2(\o\X Y)$},\\
			{1\over \e}\Te(D_{\alpha\beta}\Uc_{3,h,\e})&\rightharpoonup D_{\alpha\beta}(\Uc_3)+D_{\alpha\beta,y'}(\wh{U}_3),\quad&&\text{weakly in $L^2(\o\X Y)$},\\
					{1\over \e^{2}}	\Pi_\e\left(e(\Gu_{h,\e})\right) &\rightharpoonup
			E^{Lin}(\Uc)+e_y(\wh{\fu}),\quad &&\text{weakly in}\quad L^2(\o\X \Yc)^{3\X3},
		\end{aligned}
	\end{equation}
		where $E^{Lin}(\Uc)$ denotes the symmetric matrix
	\begin{equation}\label{Eq92}
		E^{Lin}(\Uc)=
		\begin{pmatrix}
			e_{11}(\Uc_m)-y_3D_{11}(\Uc_3)&*&*\\
			e_{12}(\Uc_m)-y_3D_{12}(\Uc_3) &e_{22}(\Uc_m)-y_3D_{22}(\Uc_3) &*\\
			0&0&0
		\end{pmatrix}.
	\end{equation}
	Furthermore, one has
	\begin{equation}\label{C69}
		\begin{aligned}
			{h\over \e^{2}}\left((\nabla\Gu_{h,\e})^T\nabla\Gu_{h,\e}\right)&\rightharpoonup 0\quad &&\text{weakly in}\;\; L^2(\o\X\Yc)^{3\X3}.
		\end{aligned}
	\end{equation}
\end{lemma}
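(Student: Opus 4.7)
The plan is to combine the plate-displacement decomposition of Lemma \ref{DeComKL1} with the two-scale unfolding Lemma \ref{A11} to produce all the stated limits in three successive stages. First, I would extract compactness for the macroscopic profiles: from \eqref{EWAgu1}$_{1,2}$, the clamping of $\Uc_m$ and of $\Uc_3,\nabla\Uc_3$ on $\gamma$, together with the Korn and Poincar\'e inequalities on $\o$, one obtains $\|\Uc_{m,h,\e}/\e^2\|_{H^1(\o)}+\|\Uc_{3,h,\e}/\e\|_{H^2(\o)}\le C$. Extracting a subsequence and invoking Rellich compactness then yields \eqref{CFU11}, and the clamping conditions pass to the weak limit.

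Second, I would identify the two-scale correctors. Applying Lemma \ref{A11}(2) to $\Uc_{m,h,\e}/\e^2\rightharpoonup \Uc_m$ in $H^1(\o)^2$ produces $\wh{U}_m\in L^2(\o;H^1_{per,0}(Y))^2$ realising \eqref{CFfu11}$_1$. For \eqref{CFfu11}$_2$, I would apply the same lemma componentwise to the $H^1(\o)^2$-bounded sequence $\nabla(\Uc_{3,h,\e}/\e)\rightharpoonup\nabla\Uc_3$, obtaining correctors $\wh{W}_\alpha\in L^2(\o;H^1_{per,0}(Y))$ with $\Te(D_{\alpha\beta}(\Uc_{3,h,\e}/\e))\rightharpoonup D_{\alpha\beta}\Uc_3+\partial_{y_\beta}\wh{W}_\alpha$. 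Passing the identity $\partial_\alpha\partial_\beta\Uc_{3,h,\e}=\partial_\beta\partial_\alpha\Uc_{3,h,\e}$ to this limit forces $\partial_{y_\beta}\wh{W}_\alpha=\partial_{y_\alpha}\wh{W}_\beta$ in distribution on the simply connected cell $Y$, so $\wh{W}=\nabla_{y'}\wh{U}_3$ for a unique $\wh{U}_3\in L^2(\o;H^2_{per,0}(Y))$.

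Third, for the strain limit \eqref{CFfu11}$_3$, I would split $\Gu_{h,\e}=U_{KL,h,\e}+\fu_{h,\e}$. Using $\Pi_\e(x_3)=\e y_3$, the Kirchhoff-Love part contributes, after division by $\e^2$, exactly $e_{\alpha\beta}(\Uc_m)-y_3 D_{\alpha\beta}\Uc_3+e_{\alpha\beta,y'}(\wh{U}_m)-y_3 D_{\alpha\beta,y'}(\wh{U}_3)$ in the in-plane block and zeros in the third row and column. For the residual, the identity $\Pi_\e(e(\fu_{h,\e}))=\e^{-1}e_y(\Pi_\e(\fu_{h,\e}))$ together with \eqref{EWRUO11}$_1$ makes $\e^{-3}\Pi_\e(\fu_{h,\e})$ bounded in $L^2(\o;H^1(\Yc))^3$; I would extract a weak limit $\wh{\fu}^*$ and define
\begin{equation*}
\wh{\fu}(x',y)\doteq\wh{\fu}^*(x',y)+\wh{U}_m(x',y')-y_3\nabla_{y'}\wh{U}_3(x',y')+\wh{U}_3(x',y')\Ge_3.
\end{equation*}
A short calculation shows that $e_y$ of the added Kirchhoff-Love-like term reproduces exactly the two corrector terms in the in-plane block while annihilating $e_{y,\alpha 3}$ and $e_{y,33}$, so that $E^{Lin}(\Uc)+e_y(\wh{\fu})$ coincides with the full two-scale limit in \eqref{Eq92}.

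Finally, \eqref{C69} follows from $\Pi_\e((\nabla\Gu_{h,\e})^T\nabla\Gu_{h,\e})=(\Pi_\e(\nabla\Gu_{h,\e}))^T\Pi_\e(\nabla\Gu_{h,\e})$ and \eqref{EWRUO11}$_3$: the product is bounded in $L^1(\o\X\Yc)$ by $C\e^2$, so $(h/\e^2)$ times it has $L^1$-norm $O(h)\to 0$, which is enough to identify the weak limit as zero when tested against smooth functions. The main obstacle I anticipate is the third stage: organising a single corrector $\wh{\fu}$ that is simultaneously consistent with the residual limit $\wh{\fu}^*$ (living on the full cell $\o\X\Yc$) and the two planar correctors $\wh{U}_m,\wh{U}_3$ (living on $\o\X Y$), and verifying that the added Kirchhoff-Love-like term carries precisely the symmetrised-gradient structure needed to recover \eqref{Eq92} while leaving the third row and column of $e_y$ unaffected.
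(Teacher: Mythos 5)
Your argument follows the same route as the paper for the first three stages: extract the macroscopic compactness from \eqref{EWAgu1} and the boundary conditions, apply the unfolding Lemma \ref{A11} to produce the planar correctors $\wh{U}_m,\wh{U}_3$, pass $\e^{-3}\Pi_\e(\fu_{h,\e})$ to a weak limit $\wh{\fu}^*$ in $L^2(\o;H^1(\Yc))^3$, and then define $\wh{\fu}=\wh{\fu}^*+(\wh{U}_\alpha-y_3\partial_{y_\alpha}\wh{U}_3)\Ge_\alpha+\wh{U}_3\Ge_3$. Your construction is identical to the paper's, and the verification that the Kirchhoff--Love-like term matches the corrector block of $E_y$ while contributing nothing to the $\alpha3$ and $33$ entries is correct. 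The explicit Schwarz-symmetry step you use to recover $\wh{U}_3$ from the first-order correctors $\wh{W}_\alpha$ is a legitimate way to fill in the $H^2$ two-scale compactness that the paper simply delegates to \cite{CDG}; it does not change the argument.

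The one genuine gap is in the proof of \eqref{C69}. You establish only that $\tfrac{h}{\e^{2}}\Pi_\e\bigl((\nabla\Gu_{h,\e})^T\nabla\Gu_{h,\e}\bigr)\to 0$ strongly in $L^1(\o\times\Yc)$, and then assert this ``is enough to identify the weak limit as zero when tested against smooth functions.'' But the statement asks for weak convergence in $L^2$, and $L^1$-convergence plus testing against smooth functions does not give a weak $L^2$ limit unless one also knows the sequence is \emph{bounded} in $L^2$. That bound is not a consequence of \eqref{EWRUO11}$_3$: knowing $\|\Pi_\e(\nabla\Gu_{h,\e})\|_{L^2}\le C\e$ controls the $L^1$ norm of the product but not its $L^2$ norm, which would need an $L^4$ bound on the gradient. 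The paper closes this gap using the algebraic identity
\begin{equation*}
h(\nabla\Gu_{h,\e})^T\nabla\Gu_{h,\e}
=\frac{1}{h}\Bigl[(\GI_3+h\nabla\Gu_{h,\e})^T(\GI_3+h\nabla\Gu_{h,\e})-\GI_3\Bigr]-2e(\Gu_{h,\e}),
\end{equation*}
combined with the nonlinear estimate \eqref{EQ86}$_2$, $\|(\nabla v_{h,\e})^T\nabla v_{h,\e}-\GI_3\|_{L^2(\O_\e)}\le Ch\e^{5/2}$, and the strain bound $\|e(\Gu_{h,\e})\|_{L^2(\O_\e)}\le C\e^{5/2}$. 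This yields $h\|(\nabla\Gu_{h,\e})^T\nabla\Gu_{h,\e}\|_{L^2(\O_\e)}\le C\e^{5/2}$, hence by \eqref{EQ722} the unfolded quantity $\tfrac{h}{\e^{2}}\Pi_\e((\nabla\Gu_{h,\e})^T\nabla\Gu_{h,\e})$ is bounded in $L^2(\o\times\Yc)^{3\times3}$, and the strong $L^1$ limit then upgrades to weak $L^2$ convergence. You should add this $L^2$-boundedness step; without it the last claim of the lemma is not proven.
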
	
		\begin{proof}
			The convergences \eqref{CFU11}, \eqref{CFfu11}$_{1,2,3}$ are the immediate consequences of the estimates \eqref{EWAgu1}, \eqref{EWRUO11} and using the properties of the unfolding operator given in Lemma \ref{A11}.\\
				The strain tensor of $\Gu_{h,\e}$ is given by the following $3\X3$ symmetric matrix defined a.e. in $\O_\e$ by 
			\begin{equation}\label{LST01}
				\begin{aligned}
					e(\Gu_{h,\e})&=e(U_{KL,{h,\e}})+e(\fu_{h,\e})\\
					&=\begin{pmatrix}
						\ds e_{11}(\Uc_{m,h,\e})-x_3{\partial^2\Uc_{3,h,\e}\over\partial x_1^2}&*&*\\
						\ds e_{12}(\Uc_{m,h,\e})-x_3{\partial^2\Uc_{3,h,\e}\over\partial x_1\partial x_2}& \ds e_{22}(\Uc_{m,h,\e})-x_3{\partial^2\Uc_{3,h,\e}\over\partial x_2^2}&*\\
						0&0&0
					\end{pmatrix}+e(\fu_{h,\e}).
				\end{aligned}
			\end{equation}
			 From estimate \eqref{EWRUO11}$_{1,2}$, there exits  $\fu \in L^2(\o;H^1_{per}(\Yc))^3$ such that
			$$
			{1\over \e^3}\Pi_\e(\fu_{h,\e}) \rightharpoonup \fu \quad \text{weakly in $L^2(\o;H^1(\Yc))^3$}.
			$$
			Then, the above and the convergences \eqref{CFU11}, \eqref{CFfu11}$_{1,2,3}$ with definition of $e(\Gu_{h,\e})$ we get
			$$\begin{aligned}
				{1\over\e^{2}}\Pi_\e(e(\Gu_{h,\e}))&={1\over\e^{2}}\Pi_\e\left(e(\Uc_{KL,h,\e})\right)+{1\over\e^{3}}e_y\left(\Pi_\e(\fu_{h,\e})\right)\\
				&\rightharpoonup E^{Lin}(\Uc)+E_{y}(\wh{U})+e_y(\fu)\quad \text{weakly in}\quad L^2(\o\X \Yc)^{3\X3},
			\end{aligned}$$
			where
			$$E_{y}(\wh{U})=\begin{pmatrix}
				\ds e_{11,y'}(\wh{U}_{m})-y_3 D_{11,y'}(\wh{U}_{3})&*&*\\
				\ds e_{12,y'}(\wh{U}_{m})-y_3D_{12,y'}(\wh{U}_{3})& \ds e_{22,y'}(\wh{U}_{m})-y_3{D_{22,y'}(\wh{U}_{3})}&*\\
				0&0&0
			\end{pmatrix}.$$
			Setting $\wh{\fu}=\big(\wh{U}_\alpha-y_3\partial_{y_\alpha}\wh{U}_3\big)\Ge_\alpha+\wh{U}_3\Ge_3+\fu$, we get
			$$ E^{Lin}(\Uc)+E_{y}(\wh{U})+e_y(\fu)=E^{Lin}(\Uc)+e_y(\wh{\fu}).$$
			This gives the convergence \eqref{CFfu11}$_3$.\\
Below, we show convergence \eqref{C69}.
			Again using the decomposition \eqref{PD1}, we have $\nabla \Gu_{h,\e}=\nabla \Uc_{KL,h,\e}+\nabla \fu_{h,\e}$, which gives
			$$\nabla \Gu_{h,\e}=
			\begin{pmatrix*}
				\ds \partial_1\Uc_{1,h,\e}-x_3{\partial^2\Uc_{3,h,\e}\over\partial x_1^2} &			\ds \partial_2\Uc_{1,h,\e}-x_3{\partial^2\Uc_{3,h,\e}\over\partial x_1 \partial x_2} &- \partial_1\Uc_{3,h,\e}\\[3mm]
				\ds \partial_1\Uc_{2,h,\e}-x_3{\partial^2\Uc_{3,h,\e}\over\partial x_1\partial x_2} & 	\ds \partial_2\Uc_{2,h,\e}-x_3{\partial^2\Uc_{3,h,\e}\over\partial x_2^2} & -\partial_2\Uc_{3,h,\e}\\[3mm]
				\partial_1\Uc_{3,h,\e}&	 \partial_2\Uc_{3,h,\e}&0
			\end{pmatrix*}
			+ \nabla \fu_{h,\e}.
			$$
			Since the second convergence of \eqref{CFU11} is strong and all other fields vanish due to the convergences \eqref{CFfu11} and \eqref{CFU11}$_1$ together with the estimates \eqref{EWAgu1} and \eqref{EWRUO11}, implies 
			$$
			{1\over \e}\Pi_\e\left(\nabla \Gu_{h,\e}\right)={1\over \e}\Pi_\e(\nabla\Uc_{KL,h,\e})+{1\over\e^{2}}\nabla_y\Pi_\e(\fu_{h,\e})\quad \text{a.e. in}\quad L^2(\o\X \Yc)^{3\X3}.
			$$
			So, we have
			\begin{equation}\label{CFrgu1}
				{h\over \e}\Pi_\e\left(\nabla \Gu_{h,\e}\right) \to 0
				\quad \text{strongly in}\quad L^2(\O\X \Yc)^{3\X3}.
			\end{equation}
			Therefore,
			\begin{equation}\label{C814}
				\begin{aligned}
					{h\over \e^{2}}\Pi_\e\left(\nabla \Gu_{h,\e} (\nabla \Gu_{h,\e})^T\right) &\to 0\quad \text{strongly in}\quad L^1(\o\X \Yc)^{3\X3}.
				\end{aligned}
			\end{equation} 			
			To prove that the convergences \eqref{C814} is also weak in $L^2(\o\X \Yc)^{3\X3}$, for that we show the sequence\\
			$\ds\left\{{h\over \e^{2}}\Pi_\e\left(\nabla \Gu_{h,\e}(\nabla \Gu_{h,\e})^T\right)\right\}_\e$ is bounded in $L^2(\o\X \Yc)^{3\X 3}$. Then, using the estimates \eqref{EQ86}, we obtain
			\begin{equation}\label{Req01}
				\begin{aligned}
					&h\|\nabla \Gu_{h,\e}(\nabla \Gu_{h,\e})^T\|_{L^2(\O_\e)}
					\leq \|h(\nabla \Gu_{h,\e}(\nabla \Gu_{h,\e})^T)+2e(\Gu_{h,\e})\|_{L^2(\O_\e)}+2\|e(\Gu_{h,\e})\|_{L^2(\O_\e)}\\
					&\hskip 33mm ={1\over h}\|(\GI_3+h\nabla \Gu_{h,\e}) (\GI_3+h\nabla\Gu_{h,\e})^T-\GI_3\|_{L^2(\O_\e)}+{2}\|e(\Gu_{h,\e})\|_{L^2(\O_\e)}\leq C\e^{{5\over 2}},
				\end{aligned}
			\end{equation}
			which imply
			$$h\|\Pi_\e(\nabla \Gu_{h,\e}(\nabla \Gu_{h,\e})^T)\|_{L^2(\o\X \Yc)}\leq C\e^{2}.$$
			So the above estimates yields that the convergence \eqref{C814} is also weak in $L^2(\o\X \Yc)^{3\X3}$. 
			This completes the proof.		
		\end{proof}  
		
		As a consequence of the previous Lemma along with the identity \eqref{MainSeq2}, we obtain the  convergence of the non-linear strain tensor.
		\begin{lemma}\label{CFGLST2}
			For the same subsequence as the previous Lemmas, one has when  $(\e,h)\to(0,0)$
			\begin{equation}\label{CFeu1}
				\begin{aligned}
					{1\over 2h\e^{2}} \Pi_\e\left((\nabla v_{h,\e})^T(\nabla v_{h,\e})-\GI_3\right) &\rightharpoonup E^{Lin}(\Uc)+e_y(\wh{\fu})\quad \hbox{weakly in $L^2(\o\X \Yc)^{3\X3}$, }
				\end{aligned}
			\end{equation}
	where $E^{Lin}(\Uc)$ denotes the symmetric matrix given by \eqref{Eq92}.
		\end{lemma}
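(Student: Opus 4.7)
The plan is to observe that Lemma \ref{CFGLST2} is essentially a direct corollary of Lemma \ref{CFuu11} combined with the algebraic identity \eqref{MainSeq2}. No new convergence arguments are needed; one only has to add two known limits.

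First, I would apply the re-scaling unfolding operator $\Pi_\e$ to identity \eqref{MainSeq2}. Because $\Pi_\e$ commutes with multiplication by constants and with pointwise algebraic operations (it is defined fibrewise in $y$), one obtains the pointwise equality a.e. in $\o\X\Yc$:
\begin{equation*}
\frac{1}{2h\e^{2}} \Pi_\e\!\left((\nabla v_{h,\e})^T(\nabla v_{h,\e})-\GI_3\right)
\;=\; \frac{1}{\e^{2}}\,\Pi_\e\!\left(e(\Gu_{h,\e})\right)
\;+\;\frac{h}{2\e^{2}}\,\Pi_\e\!\left((\nabla \Gu_{h,\e})^T(\nabla \Gu_{h,\e})\right).
\end{equation*}

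Next, I would pass to the weak limit in each term separately. The first term converges weakly in $L^2(\o\X\Yc)^{3\X 3}$ to $E^{Lin}(\Uc)+e_y(\wh\fu)$ by convergence \eqref{CFfu11}$_3$. For the second term, the estimate \eqref{Req01} obtained inside the proof of Lemma \ref{CFuu11} yields
\begin{equation*}
\left\|\frac{h}{\e^{2}}\,\Pi_\e\!\left((\nabla \Gu_{h,\e})^T(\nabla \Gu_{h,\e})\right)\right\|_{L^2(\o\X\Yc)}\leq C,
\end{equation*}
so this term is bounded in $L^2(\o\X\Yc)^{3\X 3}$, while \eqref{C814} shows it converges strongly to $0$ in $L^1(\o\X\Yc)^{3\X 3}$; uniqueness of the limit therefore forces the weak $L^2$-limit to be $0$, which is precisely \eqref{C69}. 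Adding the two limits by linearity yields the claimed convergence.

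There is no real obstacle here, since all the heavy lifting (especially the quadratic-remainder estimate \eqref{Req01} that promotes the strong $L^1$ convergence of $h(\nabla \Gu_{h,\e})^T(\nabla \Gu_{h,\e})/\e^2$ to a weak $L^2$ convergence) is already carried out in Lemma \ref{CFuu11}. The only thing to keep track of carefully is the factor $1/(2h)$ versus $1/h$ in \eqref{MainSeq2}: the Green--St.~Venant tensor $\bE(\nabla v_{h,\e})=\tfrac12((\nabla v_{h,\e})^T(\nabla v_{h,\e})-\GI_3)$ expands as $h\,e(\Gu_{h,\e})+\tfrac{h^2}{2}(\nabla \Gu_{h,\e})^T(\nabla \Gu_{h,\e})$, so after dividing by $h\e^{2}$ the scaling matches exactly the two convergences \eqref{CFfu11}$_3$ and \eqref{C69}, which is what delivers the stated limit.
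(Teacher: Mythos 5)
Your proposal is correct and matches the paper's argument: Lemma~\ref{CFGLST2} is presented there as an immediate corollary of Lemma~\ref{CFuu11} and identity~\eqref{MainSeq2}, which is exactly what you carry out. Your bookkeeping of the factor $\tfrac{1}{2h}$ is right (the Green--St.~Venant tensor $\GGE(\nabla v_{h,\e})$ expands to $h\,e(\Gu_{h,\e})+\tfrac{h^2}{2}(\nabla\Gu_{h,\e})^T\nabla\Gu_{h,\e}$), and the constant factor $\tfrac12$ in front of the quadratic remainder is harmless since its weak limit is zero by \eqref{C69}.
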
	
Now, we present the main result of this section which is given in Theorem \ref{MainConR}. It characterizes the limit of the linearized re-scaled infimum of the total energy  $\ds {m_{\e,h}\over h^2\e^{5}}={1\over h^2\e^{5}}\inf_{v\in\GV_\e}\GJ_{\e,h}(v)$  as the minimum of the limit energy $\GJ$ over the space $\D$, where 
$$\D=\D_0\X L^2(\o;H^1_{per,0}(\Yc))^3,\quad\text{with}\quad\D_0=\{\Uc\in H^1(\o)^2\X H^2(\o)\;|\;\Uc=\partial_\alpha\Uc_3=0\;\text{a.e. on $\gamma$}\},$$
and
$$\GJ(\Uc, \wh{\fu})=\int_{\o\X\Yc}\GQ\big(y,E^{Lin}(\Uc)+e_y(\wh\fu)\big)\,dydx'- |\Yc|\int_\o f\cdot \Uc\,dx'.$$
So, our limit minization problem is given by
\begin{equation}
	\left\{\begin{aligned}
		&\text{Find $(\Uc,\wh\fu)\in \D$ such that}\\
		&\hskip 15mm\GJ(\Uc,\wh\fu)=m_L=\inf_{(\Vc,\wh \fv)\in \D} \GJ(\Vc,\wh\fv).
	\end{aligned}\right.
\end{equation}
To prove the existence of a unique minimizer of limit energy $\GJ$ in the space $\D$, we recall a norm equivalence result from [Lemma 16, \cite{Amartya}].
\begin{lemma}\label{NEq1}
	Consider the space $\S=\R^3\X\R^3\X H^1_{per,0}(\Yc)^3$ with the semi-norm 
	$$	\|(\eta,\zeta,w)\|_\S = \sqrt{\sum_{i,j=1}^3  \|\wt{\cal E}_{ij}(\eta,\zeta,w)\|^2_{L^2( \Yc)}},
	$$ 
	where for every $(\eta,\zeta,w) \in \S$, we denote $\wt{\cal E}$ the symmetric matrix by
	$$
	\wt{\cal E}(\eta,\zeta,w)=
	\begin{pmatrix}
		\eta_1-y_3\zeta_1+ e_{11,y}( w)  & \eta_3 -y_3 \zeta_3 + e_{12,y}( w) 
		&  e_{13,y}( w)  \\
		* & \eta_2-y_3\zeta_2 + e_{22,y}( w)  &  e_{23,y}( w) \\
		* & *&    e_{33,y}( w) 
	\end{pmatrix}.
	$$ 
	Then, there exists constants $c,C>0$ such that for all $(\eta,\zeta,w) \in \S$ we have
	\begin{equation}\label{ENE1}
		c\left(\|\eta\|^2_2 + \|\zeta\|^2_2 + \|w\|^2_{H^1_{per,0}(\Yc)}\right) \leq  \|(\eta,\zeta,w)\|^2_\S
		\leq C \left(\|\eta\|^2_2 + \|\zeta\|^2_2 + \|w\|^2_{H^1_{per,0}(\Yc)}\right).
	\end{equation} 
\end{lemma}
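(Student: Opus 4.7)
The upper bound in \eqref{ENE1} is immediate from the triangle inequality: every entry of $\wt{\cal E}(\eta,\zeta,w)$ is affine in $y_3$ with coefficients in $\{\eta_i,\zeta_i\}$ plus one entry of $e_y(w)$, so using $|y_3|\le 1$ and $|\Yc|<\infty$ one gets $\|\wt{\cal E}_{ij}(\eta,\zeta,w)\|^2_{L^2(\Yc)}\le C(\|\eta\|_2^2+\|\zeta\|_2^2+\|e_{ij,y}(w)\|^2_{L^2(\Yc)})$, and summing over $i,j$ yields the right-hand side of \eqref{ENE1}.

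For the lower bound I would argue by contradiction and compactness. Suppose no $c>0$ works; then there is a sequence $\{(\eta^n,\zeta^n,w^n)\}\subset\S$ with $\|\eta^n\|_2^2+\|\zeta^n\|_2^2+\|w^n\|^2_{H^1_{per,0}(\Yc)}=1$ and $\|(\eta^n,\zeta^n,w^n)\|_\S\to 0$. Boundedness and Rellich--Kondrachov give, along a subsequence, $\eta^n\to\eta$, $\zeta^n\to\zeta$ in $\R^3$, $w^n\rightharpoonup w$ weakly in $H^1_{per,0}(\Yc)^3$ and $w^n\to w$ strongly in $L^2(\Yc)^3$. Entry-by-entry weak lower semicontinuity then forces $\wt{\cal E}(\eta,\zeta,w)=0$ a.e.\ in $\Yc$.

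The heart of the proof is to show this kernel equation forces $(\eta,\zeta,w)=0$. From the $(3,3)$, $(1,3)$ and $(2,3)$ entries one obtains $w_3=w_3(y')$ and $w_\alpha(y)=\tilde w_\alpha(y')-y_3\,\partial_{y_\alpha}w_3(y')$ for $\alpha\in\{1,2\}$. Plugging into the $(1,1)$, $(2,2)$, $(1,2)$ entries produces polynomials in $y_3$ of degree $\le 1$ whose constant and linear parts must vanish separately, yielding $\eta_\alpha=-\partial_{y_\alpha}\tilde w_\alpha$, $2\eta_3=-(\partial_{y_1}\tilde w_2+\partial_{y_2}\tilde w_1)$, $\zeta_\alpha=-\partial^2_{y_\alpha y_\alpha}w_3$ and $\zeta_3=-\partial^2_{y_1 y_2}w_3$. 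Averaging over the $y'$-periodic cell $Y$ kills every right-hand side, so $\eta=\zeta=0$; consequently $e_{y'}(\tilde w)=0$ and $D^2_{y'}w_3=0$. Periodicity in $y'$ forces $\tilde w$ and $w_3$ to be constants, and the zero-mean constraint in $H^1_{per,0}(\Yc)$ forces those constants to be zero, so $w=0$.

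To close the contradiction, upgrade the weak convergence of $w^n$ to strong. Since $\eta^n,\zeta^n\to 0$ strongly in $\R^3$ and $|y_3|\le 1$, the affine-in-$y_3$ parts of $\wt{\cal E}_{ij}(\eta^n,\zeta^n,w^n)$ tend to $0$ in $L^2(\Yc)$, so $\|(\eta^n,\zeta^n,w^n)\|_\S\to 0$ forces $\|e_y(w^n)\|_{L^2(\Yc)}\to 0$. The standard periodic Korn inequality $\|w^n\|_{H^1_{per,0}(\Yc)}\le C\|e_y(w^n)\|_{L^2(\Yc)}$ then gives $w^n\to 0$ in $H^1$, contradicting the normalization. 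The principal obstacle is the algebraic dissection of $\ker\wt{\cal E}$: one must exploit the affine-in-$y_3$ structure of each entry to simultaneously pry $\eta,\zeta$ out of $e_y(w)$, then use $y'$-periodicity and zero mean to eliminate the remaining (2D rigid-motion / harmonic) modes. Everything else reduces to a routine combination of weak compactness and the periodic Korn inequality.
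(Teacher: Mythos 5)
The paper does not actually prove this lemma; it explicitly recalls it from an external source (``[Lemma~16, \cite{Amartya}]''), so there is no in-paper proof to compare against. Your argument is nevertheless a correct, self-contained proof and follows the approach one would expect: the upper bound is an immediate triangle-inequality estimate, and the lower bound is established by normalization, weak compactness, weak lower semicontinuity to place the limit in $\ker\wt{\cal E}$, an algebraic dissection of that kernel using the affine-in-$y_3$ structure together with $Y$-periodicity and the zero-mean condition to force $(\eta,\zeta,w)=0$, and finally Korn's inequality on $H^1_{per,0}(\Yc)^3$ to upgrade the convergence of $w^n$ to strong and derive the contradiction with the normalization. Two small remarks, neither a gap: (i) the validity of Korn on $H^1_{per,0}(\Yc)^3$ rests on the observation, worth stating, that the only rigid motion that is $Y$-periodic in $y'$ with zero $\Yc$-mean is the trivial one (periodicity in $y_1,y_2$ kills the skew part, zero mean kills the constant), which you in effect reprove inside your kernel analysis; (ii) the same disentangling of $\eta,\zeta$ from $e_y(w)$ can also be done \emph{directly} and without compactness, by testing each of $\wt{\cal E}_{11},\wt{\cal E}_{22},\wt{\cal E}_{12}$ against $1$ and $y_3$ in $L^2(\Yc)$ --- periodicity makes the $e_y(w)$ contributions drop out, giving $|\eta_i|+|\zeta_i|\le C\|(\eta,\zeta,w)\|_\S$ outright, after which the triangle inequality and Korn finish the estimate; this yields a slightly more elementary proof with no subsequence extraction, but your compactness route is equally valid.
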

Hence, using the above inequality \eqref{ENE1} and the Lax-Milgram lemma, we get that the convex energy functional $\GJ$ admits a unique infimum which is in fact a minimum and it is reach for a unique element $(\Uc,\wh \fu)\in \D$, which is the solution of the variational problem
\begin{multline}\label{617}
		{1\over |\Yc|}\int_{\o\X\Yc}\hskip-2mm a_{ijkl}\left(E^{Lin}_{ij}(\Uc)+e_{ij,y}(\wh\fu)\right)\left(E^{Lin}_{kl}(\Vc)+e_{kl,y}(\wh \fv)\right)\,dx'dy=\int_\o f\cdot \Uc\,dx',\quad \forall\, (\Vc,\wh \fv)\in\D. 
\end{multline}
		\begin{theorem}\label{MainConR}
			We have
			\begin{equation}\label{SHDL01}
				m_L=\lim_{(\e,h)\to(0,0)}{m_{\e,h} \over h^2\e^{5}} = \min_{(\Vc,\wh\fv)\in \D} \GJ(\Vc,\fv)=\GJ(\Uc,\wh\fu).
			\end{equation}
		\end{theorem}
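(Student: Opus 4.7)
The plan is to prove Theorem \ref{MainConR} by the classical two-inequality scheme for $\Gamma$-convergence: a liminf bound valid for every sequence of near-minimizers, and a matching limsup bound obtained through an explicit recovery sequence. Together with the unique solvability of the limit variational problem \eqref{617} (ensured by Lemma \ref{NEq1} and Lax--Milgram), these two bounds identify $\lim_{(\e,h)\to(0,0)} m_{\e,h}/(h^2\e^5)$ with the minimum $\GJ(\Uc,\wh\fu)=m_L$.

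For the liminf, for each $(\e,h)$ pick $v_{\e,h}\in\GV_\e$ with $\GJ_{\e,h}(v_{\e,h})\leq m_{\e,h}+h^2\e^5(h+\e)$; since $\GJ_{\e,h}(\id)=0$, Lemma \ref{L7} gives $\GD(v_{\e,h})\leq Ch\e^{5/2}$. Setting $\Gu_{\e,h}=(v_{\e,h}-\id)/h$, Lemma \ref{CFuu11} provides, up to a subsequence, limits $(\Uc,\wh\fu)\in\D$ realizing the convergences \eqref{CFU11}--\eqref{CFfu11}, and Lemma \ref{CFGLST2} yields
$$\frac{1}{2h\e^2}\Pi_\e\bigl((\nabla v_{\e,h})^T\nabla v_{\e,h}-\GI_3\bigr)\rightharpoonup E^{Lin}(\Uc)+e_y(\wh\fu) \quad\text{weakly in }L^2(\o\X\Yc)^{3\X 3}.$$
Using the change-of-variables formula for $\Pi_\e$ to rewrite (up to a vanishing contribution from $\Lambda_\e$)
$$\frac{1}{h^2\e^5}\int_{\O_\e}\wh W_\e(x,\nabla v_{\e,h})\,dx=\frac{1}{|Y|}\int_{\wh\o_\e\X\Yc}\GQ\!\Bigl(y,\frac{1}{2h\e^2}\Pi_\e\bigl((\nabla v_{\e,h})^T\nabla v_{\e,h}-\GI_3\bigr)\Bigr)\,dx'dy+o(1),$$
the weak lower semicontinuity of the convex integrand $\GQ(y,\cdot)$ delivers the liminf of the elastic part. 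For the loading term, the scaling $f_{\e,h}=h(\e^2 f_\alpha\Ge_\alpha+\e^3 f_3\Ge_3)$ together with the Kirchhoff--Love decomposition $\Gu_{\e,h}=U_{KL,\e,h}+\fu_{\e,h}$, the estimate $\|\fu_{\e,h}\|_{L^2(\O_\e)}\leq C\e^{7/2}$ and the strong convergences \eqref{CFU11} give $(h^2\e^5)^{-1}\int_{\O_\e}f_{\e,h}\cdot(v_{\e,h}-\id)\,dx\to |\Yc|\int_\o f\cdot\Uc\,dx'$. Combining these we obtain $\liminf m_{\e,h}/(h^2\e^5)\geq \GJ(\Uc,\wh\fu)\geq m_L$.

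For the limsup, let $(\Uc^*,\wh\fu^*)\in\D$ be the unique minimizer of $\GJ$ and, by density, approximate it by smooth fields. Define the recovery sequence $v^*_{\e,h}=\id+h\Gu^*_{\e,h}$ with
$$\Gu^*_{\e,h}(x)=\e^2\Uc_m^*(x')-x_3\e\,\nabla'\Uc^*_3(x')+\e\,\Uc^*_3(x')\Ge_3+\e^3\,\wh\fu^*\!\Bigl(x',\Bigl\{\frac{x'}{\e}\Bigr\},\frac{x_3}{\e}\Bigr),$$
modified by a cut-off in an $O(\e)$ neighborhood of $\gamma$ to enforce $\Gu^*_{\e,h}=0$ on $\Gamma_\e$. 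Since $\nabla v^*_{\e,h}=\GI_3+O(h)$, one has $\det(\nabla v^*_{\e,h})>0$ a.e.\ for small $h$, so $v^*_{\e,h}\in\GV_\e$. A direct expansion of $(\nabla v^*_{\e,h})^T\nabla v^*_{\e,h}-\GI_3$ shows that $(2h\e^2)^{-1}\Pi_\e\bigl((\nabla v^*_{\e,h})^T\nabla v^*_{\e,h}-\GI_3\bigr)\to E^{Lin}(\Uc^*)+e_y(\wh\fu^*)$ strongly in $L^2(\o\X\Yc)$; the quadratic structure of $\GQ$ and continuity on bounded sets then yield convergence of the elastic term to $\int_{\o\X\Yc}\GQ(y,E^{Lin}(\Uc^*)+e_y(\wh\fu^*))\,dx'dy$, while the force term converges to $|\Yc|\int_\o f\cdot\Uc^*\,dx'$. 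Hence $\GJ_{\e,h}(v^*_{\e,h})/(h^2\e^5)\to\GJ(\Uc^*,\wh\fu^*)=m_L$, so $\limsup m_{\e,h}/(h^2\e^5)\leq m_L$. Uniqueness of the minimizer forces the liminf limit $(\Uc,\wh\fu)$ to coincide with $(\Uc^*,\wh\fu^*)$, upgrading the subsequential convergence to full-sequence convergence.

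The main obstacle is the construction of the recovery sequence: it must be simultaneously an admissible deformation (the clamped boundary condition $v=\id$ on $\Gamma_\e$ and $\det(\nabla v)>0$ a.e.), and produce a strong limit for the rescaled Green--St.\ Venant tensor under unfolding, despite the presence of the $\e$-periodic corrector $\wh\fu^*$ which is only defined up to the quotient space $H^1_{per,0}(\Yc)$. This is handled by smoothly approximating $(\Uc^*,\wh\fu^*)$, multiplying by a cut-off near $\gamma$, and verifying that the boundary layer contributes only an $O(\e)$ error to the unfolded energy, using that the periodic corrector integrated over an $\e$-neighborhood of $\gamma$ has $L^2(\O_\e)$-mass that rescales to a vanishing contribution after division by $h^2\e^5$.
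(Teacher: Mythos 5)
Your proposal follows the same two-inequality $\Gamma$-convergence scheme as the paper, and the liminf part is essentially identical: pick near-minimizers, invoke Lemma \ref{L7} for the compactness estimate $\GD(v_{\e,h})\leq Ch\e^{5/2}$, decompose via Lemma \ref{DeComKL1}, pass to the limit with Lemmas \ref{CFuu11} and \ref{CFGLST2}, and use convexity of $\GQ(y,\cdot)$ for weak lower semicontinuity. The limsup recovery sequence uses the same explicit scalings as the paper's display \eqref{L98}. The one genuine divergence is how the boundary condition on $\Gamma_\e$ is enforced: you propose multiplying by a cutoff in an $O(\e)$-neighbourhood of $\gamma$, whereas the paper sidesteps any boundary-layer estimate by approximating the minimizer with fields $\Uc_n\in\D_0\cap\big(\C^1(\overline\o)^2\X\C^2(\overline\o)\big)$ (so $\Uc_n$ and $\nabla\Uc_{n,3}$ already vanish on $\gamma$) and $\wh\fu_n\in\C^1_c(\overline\o;H^1_{per}(\Yc))^3$ with compact support in $\o$, so that $U_{n,h,\e}$ in \eqref{L98} belongs to $\GU_\e$ automatically. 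This difference is not cosmetic. A caveat on your cutoff: it works only if the smooth approximations are \emph{constrained to remain in $\D_0$}. If one merely mollifies without preserving the clamping, the cross term $\partial_\alpha\chi_\e\cdot\e\,\Uc_{n,3}$ is pointwise $O(1)$ on the $O(\e)$-tube (whose volume is $O(\e^2)$), hence contributes $O(\e)$ to $\|e(\Gu^*_{\e,h})\|_{L^2(\O_\e)}$, which destroys the required $o(\e^{5/2})$ threshold; only after imposing $\Uc_{n,3}=\nabla\Uc_{n,3}=0$ on $\gamma$ does that term shrink to $O(\e^3)$ and the boundary-layer becomes negligible. Once the $\Uc$-part is taken in $\D_0$, the cutoff is really only needed for the corrector $\wh\fu_n$ — and the paper's device of compactly supported correctors simply avoids it altogether. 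One further small slip: choosing $v_{\e,h}$ with $\GJ_{\e,h}(v_{\e,h})\leq m_{\e,h}+h^2\e^5(h+\e)$ does not by itself ensure $\GJ_{\e,h}(v_{\e,h})\leq \GJ_{\e,h}(\id)=0$, which is the hypothesis under which Lemma \ref{L7} is stated; as in the paper, one should replace $v_{\e,h}$ by $\id$ whenever $\GJ_{\e,h}(v_{\e,h})>0$, which does not change the liminf since $m_{\e,h}\leq 0$.
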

		\begin{proof}
			The following proof uses a form of $\Gamma$-convergence.\\[1mm]		
			\noindent{\it {\bf Step 1.}} In this step we show that
			\begin{equation*}
				\liminf_{(\e,h)\to(0,0)} {m_{\e,h}\over h^2\e^{5}}\geq\min_{(\Vc,\fv)\in \D} \GJ(\Vc,\wh\fv)=m_L.
			\end{equation*}	
			Let $\{v_{h,\e}\}_{\e,h} \subset \GV_\e$, be a sequence of deformations, such that it satisfies 
			$$ \lim_{(\e,h)\to(0,0)} {\GJ_{\e,h}(v_{h,\e})\over h^2\e^{5}} = \liminf_{(\e,h)\to (0,0)} {m_{\e,h} \over h^2\e^{5}}.$$
			Without loss of generality, we can assume that the sequence satisfies $\GJ_{\e,h}(v_{h,\e})\leq \GJ_{\e,h}(\id)=0$ and as a consequence of the estimates from the previous section, in particular \eqref{EQ86}, the sequence $\{v_{h,\e}\}_{\e,h}$ satisfies
			\begin{align*}
				\GD(v_{h,\e})+\|e(u_{h,\e})\|_{L^2(\O_\e)}\leq Ch\e^{5/2},\\
				\left\|(\GI_3+h\nabla\Gu_{h,\e})^T(\GI_3+h\nabla\Gu_{h,\e})-\GI_3\right\|_{L^2(\O_\e)}\leq Ch\e^{5/2},
			\end{align*}
			where $v_{h,\e}-\id=u_{h,\e}=h\Gu_{h,\e}$. In particular, we obtain
			\begin{equation}
				\|e(\Gu_{h,\e})\|_{L^2(\O_\e)}\leq C\e^{5/2}.
			\end{equation}
			The constant(s) do not depend on $h$ and $\e$.\\
			Therefore, for any fixed $\e$ and $h$, we are allowed to use the decomposition given in the Lemma \ref{DeComKL1} for the displacement $\Gu_{h,\e}=\ds{v_{h,\e}-\id\over h}$. So, using the decomposition we obtain the estimates \eqref{EWAgu1} and the convergences as in the Lemmas \ref{CFuu11} and \ref{CFGLST2}. Then the assumption on forces \eqref{AFB1} lead to
			\begin{equation*}
				\begin{aligned}
					\lim_{(\e,h)\to(0,0)}{1\over 2h^2\e^{4}}\int_{\o\X\Yc}\Pi_\e(f_{\e,h}\cdot (v_{h,\e}-\id))\,dx'dy=|\Yc|\int_{\o}f\cdot \Uc\,dx'=\int_{\o}F\cdot \Uc\, dx' .
				\end{aligned}
			\end{equation*} 
			We also have 
			$$\begin{aligned}
				{1\over h^2\e^{5}}\int_{\O_\e}\wh{W}_\e\big(x,\nabla v_{h,\e}\big)\,dx\geq {1\over h^2\e^{5}}\int_{\O_\e}\GQ\left({x\over\e},{1\over2}(\nabla v_{h,\e})^T(\nabla v_{h,\e})-\GI_3\right)\,dx.
			\end{aligned}$$
			Observe that
			{\begin{multline*}
					\int_{\O_\e}\GQ\left({x\over\e},{1\over2}(\nabla v_{h,\e})^T(\nabla v_{h,\e})-\GI_3\right)\,dx\\
					=\int_{\wh \O_\e}\GQ\left({x\over\e},{1\over2}(\nabla v_{h,\e})^T(\nabla v_{h,\e})-\GI_3\right)\,dx+\int_{\Lambda_\e\X(-\kappa\e,\kappa\e)}\hskip -4mm\GQ\left({x\over\e},{1\over2}(\nabla v_{h,\e})^T(\nabla v_{h,\e})-\GI_3\right)\,dx.
				\end{multline*}}
			So, the convergence \eqref{CFeu1} along with the fact that $|\Lambda_\e|\to 0$ as $(\e,h)\to(0,0)$ give
			\begin{align*}
				&\lim_{(\e,h)\to(0,0)}{1\over h^2\e^{5}}\int_{ \O_\e}\GQ\Big({x\over\e},{1\over2}(\nabla v_{h,\e})^T(\nabla v_{h,\e})-\GI_3\Big)\,dx\\
				&=\lim_{(\e,h)\to(0,0)}\Big(\int_{\wh\o_\e\X \Yc}\GQ\Big(y,{1\over 2h\e^2}\Pi_\e\Big((\nabla v_{h,\e})^T(\nabla v_{h,\e})-\GI_3\Big)\Big)\,dx'dy\\
				&\hskip 22mm +\int_{\Lambda_\e\X \Yc}\GQ\Big(y,{1\over 2h\e^2}\Pi_\e\Big((\nabla v_{h,\e})^T(\nabla v_{h,\e})-\GI_3\Big)\Big)\,dx'dy\Big)\\
				&=\int_{\o\X \Yc}\GQ\Big(y, E^{Lin}(\Uc)+e_y(\wh \fu)\Big)\,dx'dy.
			\end{align*}
			Hence, as consequence of the above inequality  and using the weak semi-continuity of $\GJ$, we get
			\begin{multline*}
				\liminf_{(\e,h)\to(0,0)}{m_{\e,h}\over h^2\e^5}=\liminf_{(\e,h)\to(0,0)}{\GJ_{\e,h}(v_{h,\e})\over h^2\e^5}\\
				\geq \liminf_{(\e,h)\to(0,0)}\left({1\over h^2\e^5}\int_{\O_\e}\GQ\left({x\over\e},{1\over2}(\nabla v_{h,\e})^T(\nabla v_{h,\e})-\GI_3\right)\,dx-{1\over h^2\e^5}\int_{\O_\e}f_{\e,h}\cdot (v_{h,\e}-\id)\,dx\right)
				=\GJ(\Uc,\wh \fu). 
			\end{multline*}
			This completes the proof of Step 1.\\[1mm]
			\noindent{\it {\bf Step 2.}}
				We recall that there exist $(\Uc,\wh\fu) \in \D$ such that 
				$$ m_L=\min_{(\Vc,\wh\fv)\in \D} {\GJ}(\Vc,\wh\fv) = \GJ(\Uc,\wh\fu).$$
			In this step we show that 
			\begin{equation*}
				\limsup_{(\e,h)\to(0,0)}{m_{\e,h}\over h^2\e^{5}} \leq \GJ(\Uc,\wh{\fu}).
			\end{equation*}
			To do that, we will build a sequence $\{V_{n,h,\e}\}_{n,\e,h}$ of admissible deformations such that 
			$$ \limsup_{(\e,h)\to(0,0)}{m_{\e,h}\over h^2\e^{5}}  \leq \lim_{n\to \infty}\lim_{(\e,h)\to(0,0)} {\GJ_{\e,h}(V_{n,h,\e})\over h^2\e^{5}}=\GJ(\Uc,\wh{\fu}).$$
			We consider a sequence $\{\Uc_n,\wh{\fu}_{n}\}_{n}$ such that
			\begin{itemize}
				\setlength{\itemindent}{5mm}
				\item $\Uc_n\in \D_0\cap \big(\C^1(\overline{\o})^2\X\C^2(\overline{\o})\big)$, satisfying
				\begin{equation}\label{L96}
					\begin{aligned}
						\Uc_{n,\alpha} &\to \Uc_\alpha\qquad \text{strongly in}\quad H^1(\o),\\
						\Uc_{n,3} &\to \Uc_3\qquad \text{strongly in}\quad H^2(\o).
					\end{aligned}
				\end{equation}
				\item  $\wh{\fu}_{n}\in L^2(\o;H^1_{per}(\Yc))^3\cap \C^1_c(\overline{\o}; H^1_{per}(\Yc))^3$ satisfying
				\begin{equation}\label{L97}
					\begin{aligned}
						\wh{\fu}_{n} &\to\fu\qquad \text{strongly in}\quad L^2(\o;H^1_{per}(\Yc))^3.
					\end{aligned}
				\end{equation}
			\end{itemize}  
			We define the following sequence  $\{U_{n,h,\e}\}_{\e}$ of displacements of the whole structure $\O_\e$ as 
			\begin{itemize}
				\setlength{\itemindent}{5mm}
				\item In $\O_\e$ we set
				\begin{equation}\label{L98}
					\begin{aligned}
						U_{n,h,\e,1}&=\e^{2}\left(\Uc_{n,1}(x_1,x_2)-{x_3\over \e}\partial_1\Uc_{n,3}(x_1,x_2)+\e\wh{\fu}_{n,1}\left(x_1,x_2,{x_3\over \e}\right)\right),\\
						U_{n,h,\e,2}&=\e^{2}\left(\Uc_{n,2}(x_1,x_2)-{x_3\over \e}\partial_2\Uc_{n,3}(x_1,x_2)+\e\wh{\fu}_{n,2}\left(x_1,x_2,{x_3\over \e}\right)\right),\\
						U_{n,h,\e,3}&=\e\left(\Uc_{n,3}(x_1,x_2)+\e^2\wh{\fu}_{n,3}\left(x_1,x_2,{x_3\over\e}\right)\right),
					\end{aligned}
				\end{equation}
			\end{itemize}
			By construction, we have the displacements $U_{n,h,\e}$ belong to $\GU_\e$ and satisfy
			$$\begin{aligned}
				\|\nabla U_{n,h,\e}\|_{L^\infty(\O_\e)}&\leq C(n)\e.
			\end{aligned} $$
			which gives the estimate of the displacement gradient. We set
			$$V_{n,h,\e}=\id+hU_{n,h,\e},\quad \text{in}\;\;\O_\e\implies \|\nabla V_{n,h,\e}-\GI_3\|_{L^\infty(\O_\e)}\leq C(n)h\e.$$
			Here the constant $C(n)$ does not depend on $\e$ and $h$, but depends on $n$ such that $C(n)\to +\infty$ for $n\to +\infty$ because  the strong convergence  given in \eqref{L96}--\eqref{L97} are given in $H^1(\o)$, $H^2(\o)$ and $L^2(\o;H^1_{per}(\Yc))^3$. This implies for small enough $\e$ and $h$, we have for a.e. $x\in \O_\e$ we have $|\nabla V_{n,h,\e}(x)-\GI_3|_F<1$. As a consequence we get det$\big(V_{n,h,\e}\big)(x)>0$ for all $n\in \N$ and for a.e. $x\in\O_\e$. This leads to
			\begin{equation}\label{Eq1118}
				\wh{W}_\e(x,\nabla V_{n,h,\e})=\GQ\left({x\over \e},\nabla V_{n,h,\e}\right),\quad \text{for a.e.}\;\;x\in\O_\e,\quad\text{and}\quad m_{\e,h}\leq \GJ_{\e,h}(V_{n,h,\e}).
			\end{equation}
			In the expression \eqref{L98} of the displacement $U_{n,h,\e}$, the explicit dependence with respect to $\e$ and $h$ permits to derive directly the limit of the Green-St.\ Venant's strain tensor as $\e,h$ tends to $0$ ($n$ being fixed) as in Lemma \ref{CFGLST2}, so we obtain
			$$\begin{aligned}
				{1\over 2h\e^{2}} \Pi_\e\left(\big(\nabla V_{n,h,\e}\big)^T (\nabla V_{n,h,\e})-\GI_3\right) &\to E^{Lin}(\Uc_n)+e_y(\wh{\fu}_n),\quad \text{strongly on $L^\infty(\o\X \Yc)^{3\X3}$}.
			\end{aligned}$$
			The above convergences give the convergence of the elastic energy
			$$\begin{aligned}
				\lim_{(\e,h)\to(0,0)}&{1\over h^2\e^{5}}\GJ_{\e,h}(V_{n,h,\e})\\
				& =\lim_{(\e,h)\to(0,0)} {1\over h^2\e^{4}}\int_{\o\X\Yc}\Pi_\e(\wh{W_\e}\big(y, \nabla V_{n,h,\e}\big)\,dx'dy\\
				&=\lim_{(\e,h)\to(0,0)}{1\over h^2\e^{4}}\int_{\o\X \Yc}\Pi_\e\big(\GQ\big(y,\big(\nabla V_{n,h,\e}\big)^T (\nabla V_{n,h,\e})-\GI_3)\big)\,dx'dy\\
				&=\int_{\o\X \Yc}\GQ(y,E^{Lin}(\Uc_n)+e_y(\wh{\fu}_n)\,dx'dy,
			\end{aligned}$$
			and the right-hand side
			$$\lim_{(\e,h)\to(0,0)}{1\over 2h^2\e^{4}}\int_{\o\X\Yc}\Pi_\e(f_{\e,h}\cdot(V_{n,h,\e}-\id))\,dx'dy=  \int_\o F\cdot\Uc_n\,dx'.$$
			Hence, with \eqref{Eq1118} and using the above convergences we obtain
			$$ \limsup_{(\e,h)\to(0,0)}{m_{\e,h}\over h^2\e^{5}}\leq \lim_{(\e,h)\to(0,0)}{1\over h^2\e^{5}}\GJ_{\e,h}(V_{n,h,\e})=\GJ(\Uc_{n},\wh{\fu}_{n}).$$
			Since this holds for every $n\in \N$, when $n$ tends infinity the strong convergences \eqref{L96}--\eqref{L97} yield
			$$\limsup_{(\e,h)\to(0,0)}{m_{\e,h}\over h^2\e^{5}}\leq \lim_{n\to +\infty} \GJ(\Uc_{n},\wh{\fu}_{n})=\GJ(\Uc,\wh{\fu}),$$
			which completes the Step $2$.\\[1mm]			
			\noindent{\it {\bf Step 3.}}
			From Step $1$ and $2$, we have
			$$ \GJ(\Uc,\wh\fu)\leq \liminf_{(h,\e)\to(0,0)}{m_{\e}\over \e^{5}}\leq \limsup_{(h,\e)\to(0,0)}{m_{\e,h}\over h^2\e^{5}}\leq \GJ(\Uc,\wh{\fu}).$$
			This completes the proof.
		\end{proof}
		\subsection{The cell problems}\label{Ssec93}
		To obtain the cell problems, we consider the variational formulation \eqref{617}. From this variational form, we obtain that 
		\begin{equation}\label{Eq1042}
			\begin{aligned}
				& \text{Find $\wh\fu\in L^2(\o;H^1_{per,0}(\Yc))^3$}\; \hbox{satisfies  a.e. in $\o$}\\	
				&\hskip 10mm\begin{aligned}
				\int_{\Yc} a(y)\left(E^{Lin}(\Uc)+e_y(\wh\fu)\right) : e_y(\wh{\fw})\, dy&=0\quad \text{for all}\quad \wh{\fw} \in H^1_{per,0}(\Yc)^3.
					%				\int_{\Yc} a(y)\left(E^{Lin}(\Uc)+B\right) : D\, dy&=0\quad \text{for all}\quad D \in L^2(\Yc,\S_{3\X3}).
					%			\int_{\Yc} a(y) B\,:\,D\,dy&=0\quad \text{for all}\quad D\in L^2(\Yc;\S_{3\X3}),
				\end{aligned}
			\end{aligned}
		\end{equation}
		So, we have linear problems. Hence there exist a unique solution for the above problems. As a consequence, we obtain 
		\begin{equation}\label{EQ761}
		\int_{\Yc} a(y)e_y(\wh\fu) : e_y(\wh{\fw}) dy=-\int_{\Yc} a(y)\left(E^{Lin}(\Uc)\right) : e_y(\wh{\fw}) dy,
					\quad \forall\; \wh{\fw} \in H^1_{per,0}(\Yc)^3.
		\end{equation}
		This shows that $e_y(\wh\fu)$ can be expressed in terms of the elements of the tensor $E^{Lin}(\Uc)$ and some correctors.\\
			Let us denote by $M^{np}$ the $3\X3$ symmetric matrices with the following coefficients
			\begin{equation}\label{EQMatrix}
				M^{np}_{kl}={1\over 2}\Big(\d_{kn}\d_{lp}+\d_{kp}\d_{ln}\Big),\quad n,p,k,l\in\{1,2,3\},
			\end{equation}
			where $\d_{ij}$ is the Kronecker symbol.
			The cell problems in $\Yc$ are given by
			\begin{equation}\label{Eq125}
				\begin{aligned}
					&\text{Find $\big(\chi^m_{\alpha\beta},\chi^b_{\alpha\beta}\big)\in [H^1_{per,0}(\Yc)^3]^6$ such that}\\
					&\hskip 20mm\begin{aligned}
						&\left.\begin{aligned}
							&\int_{\Yc}a(y)(M^{\alpha\beta}+e_y(\chi^m_{\alpha\beta})) : e_y(w) dy =0,\\
							&\int_{\Yc}a(y)(-y_3M^{\alpha\beta}+e_y(\chi^b_{\alpha\beta})) : e_y(w) dy =0,
						\end{aligned}\right\}\;\;\text{for all}\quad w\in H^1_{per,0}(\Yc)^3.
					\end{aligned}
				\end{aligned}			
			\end{equation}
			The above equations imply for a.e. $(x',y)\in \o\X\Yc$
			\begin{equation}\label{Eq1044}
				\begin{aligned}
					&\wh\fu(x',y)=e_{np}(\Uc_m)(x')\chi^m_{np}(y)+\partial_{np}\Uc_3(x')\chi^b_{np}(y),
					%	    &B(x',y)=e_{\alpha\beta}(\Uc_m)(x')\chi^{pm}_{\alpha\beta}(y)+\partial_{\alpha\beta}\Uc_3(x')\chi^{pb}_{\alpha\beta}(y),
				\end{aligned}
			\end{equation}
			where $e_{np}(\Uc_m)$, $\partial_{np}\Uc_3$, $\chi^m_{np}$ and $\chi^b_{np}$ is zero, if $n$ or $p$ is equal to 3.\\
			We set the homogenized coefficients in $\Yc$ as for $x'\in\o$
			\begin{equation}\label{EQ1048}
				\begin{aligned}
					&a^{hom}_{npn'p'} = \frac{1}{|\Yc|} \int_{\Yc} a_{ijkl}(y) \left[M^{np}_{ij} + 
					e_{y,ij}({\chi}_{np}^m)\right]M^{n'p'}_{kl} dy,\\
					&b^{hom}_{npn'p'} = \frac{1}{|\Yc|} \int_{\Yc} a_{ijkl}(y) \left[y_3M^{np}_{ij} + 
					e_{y,ij}({\chi}_{np}^b)\right]M^{n'p'}_{kl} dy,\\
					&c^{hom}_{npn'p'} = \frac{1}{|\Yc|} \int_{\Yc} a_{ijkl}(y) \left[y_3M^{np}_{ij} + 
					e_{y,ij}({\chi}_{np}^b)\right]y_3M^{n'p'}_{kl} dy.
				\end{aligned}
			\end{equation}
			Hence, the homogenized energy is defined by
			\begin{equation}\label{EQ949}
				\GJ_{L}^{hom}(\Uc)=|\Yc|\GJ^{ hom}(\Uc)+|\Yc|\int_{\o}f\cdot \Uc dx',
			\end{equation}
			where
			\begin{equation}\label{EQ950}
				\begin{aligned}
					\GJ^{ hom}(\Uc)&=
					{1\over 2}\int_{\o}\Big(	a^{hom}_{npn'p'}e_{np}(\Uc_m)e_{n'p'}(\Uc_m)+ b^{hom}_{npn'p'}e_{np}(\Uc_m)\partial_{n'p'}\Uc_3\\
					&\hspace{1cm}+c^{hom}_{npn'p'}\partial_{np}\Uc_3\partial_{n'p'}\Uc_3\Big)\,dx'.
				\end{aligned}
			\end{equation}
			The introduction of the homogenized coefficients in the limit energy  gives the convergence result for the homogenized energy. Using the Lemma 11.22 and Remark 11.21 in \cite{CDG}, we have the associated quadratic form of the homogenized energy \eqref{EQ949} is coercive and bounded. Hence, using Lax-Milgram lemma, there exist a unique minimizer of the problem\eqref{EQ949}.\\
			The following is the main result of this section
		\begin{theorem}
			We have for
			\begin{equation}\label{Main01}
				m_L=	\lim_{(\e,h)\to(0,0)}{m_{\e,h} \over h^2\e^{5}} = \min_{(\Vc,\wh{\fv})\in \D} \GJ(\Vc,\wh{\fv})= 	\min_{\Vc\in\D_0} \GJ_{L}^{hom}(\Vc)=\GJ_{L}^{hom}(\Uc),
			\end{equation} 
			where $\Uc$ is the unique minimizer.
		\end{theorem}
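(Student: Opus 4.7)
The plan is to prove the chain of equalities by reducing everything to what has already been established. The first equality $m_L = \min_{(\Vc,\wh\fv)\in\D}\GJ(\Vc,\wh\fv)$ is exactly the content of Theorem~\ref{MainConR}, and the third equality defines $\Uc$ as the minimizer. So the real task is the middle identity
$$\min_{(\Vc,\wh\fv)\in\D} \GJ(\Vc,\wh\fv) = \min_{\Vc\in\D_0} \GJ_L^{hom}(\Vc).$$
I will perform the minimization on $\D$ in two nested stages: first minimize $\GJ(\Vc,\cdot)$ over $\wh\fv \in L^2(\o;H^1_{per,0}(\Yc))^3$ with $\Vc\in\D_0$ held fixed, then minimize the resulting reduced functional over $\D_0$.

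For the inner problem, note that $\wh\fv\mapsto \GJ(\Vc,\wh\fv)$ is strictly convex, coercive and quadratic on $L^2(\o;H^1_{per,0}(\Yc))^3$ (the force term does not involve $\wh\fv$), so Lax--Milgram gives a unique minimizer $\wh\fv^*(\Vc)$ characterized by the Euler--Lagrange equation \eqref{Eq1042}. By linearity and the cell problems \eqref{Eq125}, this minimizer has the explicit representation \eqref{Eq1044} in terms of the correctors $\chi^m_{np},\chi^b_{np}$ applied to $e_{np}(\Vc_m)$ and $\partial_{np}\Vc_3$. I will then substitute this representation into $\GJ(\Vc,\wh\fv^*(\Vc))$ and identify the resulting quadratic form in the components of $E^{Lin}(\Vc)$ with the homogenized expression $|\Yc|\GJ^{hom}(\Vc)$ defined through $a^{hom},b^{hom},c^{hom}$ in \eqref{EQ1048}--\eqref{EQ950}.

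The main bookkeeping step is this algebraic identification. To keep it clean, I will use $\wh\fv^*(\Vc)$ itself as the test function in \eqref{EQ761}; this reduces the cross-terms and the pure-corrector terms of the expansion to $-\int a(y)E^{Lin}(\Vc):e_y(\wh\fv^*)$, so that after substitution only terms involving $E^{Lin}(\Vc)$ paired with $\bigl(M^{np}+e_y(\chi^m_{np})\bigr)$ and $\bigl(-y_3 M^{np}+e_y(\chi^b_{np})\bigr)$ survive. Expanding $E^{Lin}(\Vc)$ in the basis $\{M^{\alpha\beta},-y_3 M^{\alpha\beta}\}$ and invoking the definitions \eqref{EQ1048} then collapses the integrand to the three homogenized tensors acting on $e(\Vc_m)$ and $D^2\Vc_3$, reproducing \eqref{EQ950}.

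Once this identification is in hand, the nested minimization principle gives
$$\min_{(\Vc,\wh\fv)\in\D}\GJ(\Vc,\wh\fv) = \min_{\Vc\in\D_0}\GJ(\Vc,\wh\fv^*(\Vc)) = \min_{\Vc\in\D_0}\GJ_L^{hom}(\Vc),$$
and existence and uniqueness of the minimizer $\Uc\in\D_0$ follow from the coercivity and boundedness of the homogenized quadratic form (cited via Lemma~11.22 and Remark~11.21 in \cite{CDG}) together with Lax--Milgram. No new convergence analysis is required, since the $\Gamma$-limit identification was completed in Theorem~\ref{MainConR}; the only delicate step is the algebraic bookkeeping in the corrector substitution, which I expect to be the main obstacle.
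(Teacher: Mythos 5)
Your proposal is correct and takes essentially the same route as the paper: Subsection~\ref{Ssec93} derives the cell problems \eqref{Eq1042}--\eqref{Eq125}, the corrector representation \eqref{Eq1044}, and the homogenized coefficients \eqref{EQ1048}, then simply asserts the theorem, implicitly relying on exactly the nested-minimization reduction (minimize over $\wh\fv$ first, substitute back, read off $\GJ_L^{hom}$) that you write out explicitly. The only genuine extra content you supply is the use of $\wh\fv^*$ as a test function in \eqref{EQ761} to collapse the cross-terms, which is the standard and correct way to make the algebraic identification precise; the rest matches the paper's Lax--Milgram and coercivity citations verbatim.
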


\section{Linearization $h\to 0$ ($\e$ being fixed)}\label{Sec05}
In this section, we give general linearization ($h\to0$ and $\e>0$ being fixed) for the total elastic energy with forces. Since, $\e$ is fixed, for simplicity of notation, we only index all terms with $h$.
\begin{lemma}\label{ConML}
	Let $\{v_{h,\e}\}_h$ be sequence of admissible deformations in $\GV_\e$ such that $\GJ_{\e,h}(v_{h,\e})\leq 0$. Then, there exist a subsequence (still denoted by $h$), $\Gu_\e\in\GU_\e$ such that, we have the following convergences:
	\begin{equation}\label{Con1}
		\begin{aligned}
				\Gu_{h,\e}& \rightharpoonup \Gu_\e\quad && \text{weakly in $H^1(\O_\e)^3$},\\
			e(\Gu_{h,\e})&\rightharpoonup e(\Gu_\e)\quad &&\text{weakly in $L^2(\O_\e)^{3\X3}$}
	\end{aligned}
	\end{equation} where $\Gu_{h,\e}=\ds{v_{h,\e}-\id\over h}={u_{h,\e}\over h}$.\\
As a consequence of the above convergence, we also have
\begin{equation}\label{ConLin}
	{1\over 2h}\left(\big(\GI_3+h\nabla\Gu_{h,\e}\big)^T\big(\GI_3+h\nabla\Gu_{h,\e}\big)-\GI_3\right)\rightharpoonup e(\Gu_\e)\;\text{weakly in}\;\; L^2(\O_\e)^{3\X3},
\end{equation}
%The set of admissible displacements is
%$$\GU_\e=\{\Gw\in H^1(\O_\e)^3\;|\;\Gw=0\quad\text{on}\;\;\Gamma_\e\}.$$
\end{lemma}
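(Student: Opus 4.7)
The plan is to extract uniform-in-$h$ bounds from the energy inequality via Lemma \ref{L7}, then invoke weak $H^1$ compactness and pass to the limit.

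Since $\GJ_{\e,h}(v_{h,\e}) \le 0 = \GJ_{\e,h}(\id)$, Lemma \ref{L7} applies (with $a=1$) and delivers
$$\GD(v_{h,\e}) \le Ch\e^{5/2}, \quad \|e(u_{h,\e})\|_{L^2(\O_\e)} \le Ch\e^{5/2}, \quad \|(\nabla v_{h,\e})^T\nabla v_{h,\e} - \GI_3\|_{L^2(\O_\e)} \le Ch\e^{5/2},$$
where $u_{h,\e} = h\Gu_{h,\e}$. Dividing by $h$, the linearized strain $e(\Gu_{h,\e})$ is uniformly bounded in $L^2(\O_\e)^{3\times 3}$, and the non-linear Korn inequality \eqref{EQ448} of Lemma \ref{KornB1} gives $\|\Gu_{h,\e}\|_{H^1(\O_\e)} \le h^{-1}\frac{C}{\e}\GD(v_{h,\e}) \le C(\e)$ uniformly in $h$.

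The set $\GU_\e$ is a closed affine subspace of $H^1(\O_\e)^3$ (defined by the trace condition on $\Gamma_\e$), hence weakly closed. Weak-$H^1$ compactness then extracts a subsequence and a limit $\Gu_\e \in \GU_\e$ with $\Gu_{h,\e} \rightharpoonup \Gu_\e$ in $H^1(\O_\e)^3$, and continuity of the symmetric gradient yields $e(\Gu_{h,\e}) \rightharpoonup e(\Gu_\e)$ in $L^2(\O_\e)^{3\times 3}$, giving \eqref{Con1}.

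For \eqref{ConLin}, I would use the algebraic identity
$$\frac{1}{2h}\bigl((\GI_3 + h\nabla\Gu_{h,\e})^T(\GI_3 + h\nabla\Gu_{h,\e}) - \GI_3\bigr) = e(\Gu_{h,\e}) + \frac{h}{2}(\nabla\Gu_{h,\e})^T\nabla\Gu_{h,\e}.$$
The first summand converges weakly to $e(\Gu_\e)$ by the previous step. For the quadratic correction, rearranging the third bound of Lemma \ref{L7} together with the $L^2$ bound on $e(\Gu_{h,\e})$ yields
$$\Bigl\|\tfrac{h}{2}(\nabla\Gu_{h,\e})^T\nabla\Gu_{h,\e}\Bigr\|_{L^2(\O_\e)} \le C(\e)$$
uniformly in $h$, while the crude $L^1$ estimate gives
$$\Bigl\|\tfrac{h}{2}(\nabla\Gu_{h,\e})^T\nabla\Gu_{h,\e}\Bigr\|_{L^1(\O_\e)} \le \tfrac{h}{2}\|\nabla\Gu_{h,\e}\|^2_{L^2(\O_\e)} \le C(\e)h \to 0.$$
A standard density argument (approximating any $L^2$ test by bounded functions) upgrades the $L^1$-strong-to-zero together with $L^2$-boundedness into weak $L^2$-convergence to zero, proving \eqref{ConLin}.

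The main, though mild, obstacle is this last step: the quadratic term $(\nabla\Gu_{h,\e})^T\nabla\Gu_{h,\e}$ is naturally bounded only in $L^1$, and promoting its convergence to zero to a weak $L^2$ statement crucially requires extracting the finer $L^2$ estimate from Lemma \ref{L7}, itself grounded on the energy constraint $\GJ_{\e,h}(v_{h,\e}) \le 0$ and the Green-St.\ Venant coercivity \eqref{eq64}.
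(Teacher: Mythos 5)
Your proposal is correct and follows essentially the same route as the paper: apply Lemma \ref{L7} (equivalently \eqref{EQ86}) under the energy constraint $\GJ_{\e,h}(v_{h,\e})\le 0$, combine with the nonlinear Korn inequality \eqref{EQ448} to get $h$-uniform $H^1$ bounds on $\Gu_{h,\e}$, extract a weak $H^1$ limit in the weakly closed set $\GU_\e$, and then treat the quadratic remainder in the Green--St.\ Venant identity \eqref{MainSeq2} by combining $L^1$-strong convergence to zero with uniform $L^2$ boundedness (the latter obtained exactly as in \eqref{Req01}) to conclude weak $L^2$ convergence to zero. No gaps; the argument matches the paper's step by step.
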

\begin{proof}
	Since $v_{h,\e}$ belongs to $\GV_\e$ and satisfies $\GJ_{\e,h}(v_{h,\e})\leq 0$,  as a consequence of the estimates  \eqref{EQ448}  and \eqref{EQ86}, we have
	$$
	\GD(v_{h,\e})\leq Ch,\qquad \|v_{h,\e}-\id\|_{H^1(\O_\e)}\leq Ch.
	$$	
	The  constant $C$ dependent  on $\e$. We have set
	$h\Gu_{h,\e}=\ds{v_{h,\e}-\id}={u_{h,\e}}$. Thus, we have
	\begin{equation}\label{511}
		\|\Gu_{h,\e}\|_{H^1(\O_\e)}+\|e(\Gu_{h,\e})\|_{L^2(\O_\e)}\leq C.
	\end{equation}
	Thus, there exists a subsequence (still denoted by $h$) and $\Gu_\e\in \GU_\e$ such that the convergences \eqref{Con1} holds.\\
Then, the estimates  \eqref{EQ86}$_3$ and \eqref{511}$_1$ yield
   $$
 \big\|h\big(\nabla \Gu_{h,\e}\big)^T\big(\nabla \Gu_{h,\e}\big) \big\|_{L^1(\O_\e)}\leq h \big\|\nabla \Gu_h\big\|^2_{L^2(\O_\e)}\leq Ch.
$$	
	    This gives
	\begin{equation}\label{E11}
		h\big(\nabla \Gu_{h,\e}\big)^T\big(\nabla \Gu_{h,\e}\big) \to 0\qquad \text{strongly in $L^1(\O_\e)^{3\X3}$}
	\end{equation} 
	Again, using \eqref{511} and proceeding as in \eqref{Req01}, we obtain	
	\begin{align*}
		h\|(\nabla\Gu_{h,\e})^T\nabla\Gu_{h,\e}\|_{L^2(\O_\e)} &\leq C,
	\end{align*}
	which along with the convergence \eqref{E11} gives
	$$	h\big(\nabla \Gu_{h,\e}\big)^T\big(\nabla \Gu_{h,\e}\big) \rightharpoonup 0,\quad \text{weakly in $L^2(\O_\e)^{3\X3}$}.$$
So, using the above convergence along with \eqref{Con1}$_2$ and the identity \eqref{MainSeq2}, we get \eqref{ConLin}.
	\end{proof}
The next theorem is the main linearization result, in which we give the limit of the rescaled sequence $\ds\left\{{m_{\e,h}\over h^2}\right\}_h$ as $h\to0$ ($\e$ is fixed).
\begin{theorem}\label{TH4}
	We have ($\e$ being fixed)
	\begin{equation}
		\lim_{h\to0}{m_{\e,h}\over h^2\e^5}=m_\e,
	\end{equation}
	where
	\begin{equation}\label{Eq1015}
		m_\e=\inf_{\Gu\in\GU_\e}\GJ^{Lin}_\e(\Gu)\quad
		\text{with}\quad
		\GJ^{Lin}_\e(\Gu)=\int_{\O_\e}\GQ\left({x\over \e},e(\Gu)\right)dx-\int_{\O_\e}f_\e\cdot\Gu\, dx.
	\end{equation}
\end{theorem}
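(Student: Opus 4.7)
Since $\varepsilon$ is fixed, the argument is a textbook $\Gamma$--convergence computation in which the only $h$-dependence comes through the pair (deformation, force). The plan is to establish matching $\liminf$ and $\limsup$ inequalities, directly exploiting the decomposition \eqref{MainSeq2} of the Green--St.~Venant tensor and the weak convergences already proved in Lemma \ref{ConML}.

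\textbf{Step 1 ($\liminf$).} Pick a minimizing sequence $\{v_{h,\varepsilon}\}_h\subset\GV_\varepsilon$ for $m_{\varepsilon,h}$. Without loss of generality $\GJ_{\varepsilon,h}(v_{h,\varepsilon})\le\GJ_{\varepsilon,h}(\id)=0$, so Lemma \ref{ConML} yields (up to a subsequence) a limit $\Gu_\varepsilon\in\GU_\varepsilon$ with $\Gu_{h,\varepsilon}\doteq(v_{h,\varepsilon}-\id)/h \rightharpoonup \Gu_\varepsilon$ in $H^1(\O_\varepsilon)^3$ and the nonlinear strain convergence \eqref{ConLin}. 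Since $\GQ(y,\cdot)$ is a non-negative quadratic form, the induced functional $S\mapsto\int_{\O_\varepsilon}\GQ(x/\varepsilon,S)\,dx$ is convex and continuous on $L^2$, hence weakly lower semicontinuous, giving
\begin{equation*}
\liminf_{h\to 0}\frac{1}{h^2}\int_{\O_\varepsilon}\GQ\Bigl(\frac{x}{\varepsilon},\tfrac{1}{2}((\nabla v_{h,\varepsilon})^T\nabla v_{h,\varepsilon}-\GI_3)\Bigr)dx\ge\int_{\O_\varepsilon}\GQ\Bigl(\frac{x}{\varepsilon},e(\Gu_\varepsilon)\Bigr)dx.
\end{equation*}
For the load term, using $f_{\varepsilon,h}=hf_\varepsilon$ one has $h^{-2}\int_{\O_\varepsilon}f_{\varepsilon,h}\cdot(v_{h,\varepsilon}-\id)\,dx=\int_{\O_\varepsilon}f_\varepsilon\cdot\Gu_{h,\varepsilon}\,dx\to\int_{\O_\varepsilon}f_\varepsilon\cdot\Gu_\varepsilon\,dx$ by the weak $H^1$ convergence. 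Combining, $\liminf_{h\to 0}m_{\varepsilon,h}/h^2\ge\GJ^{Lin}_\varepsilon(\Gu_\varepsilon)\ge m_\varepsilon$.

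\textbf{Step 2 ($\limsup$).} Because $\GQ(x/\varepsilon,\cdot)$ is coercive on symmetric matrices by \eqref{CoeCon} and Korn's inequality holds on $\GU_\varepsilon$ (with constant depending on $\varepsilon$), $\GJ^{Lin}_\varepsilon$ is coercive and strictly convex on $\GU_\varepsilon$, so by Lax--Milgram $m_\varepsilon$ is attained at a unique $\Gu_\varepsilon\in\GU_\varepsilon$. I would then build a recovery sequence by smoothing: approximate $\Gu_\varepsilon$ by $\{\Gu_n\}_n\subset\GU_\varepsilon\cap W^{1,\infty}(\O_\varepsilon)^3$ with $\Gu_n\to\Gu_\varepsilon$ strongly in $H^1(\O_\varepsilon)^3$ (standard by truncation of extensions and mollification, respecting the clamping on $\Gamma_\varepsilon$). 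For each $n$ set $v_{n,h}\doteq\id+h\Gu_n$. Since $\|\nabla\Gu_n\|_{L^\infty}\le C_n<\infty$, for $h$ small enough (depending on $n$) $|\nabla v_{n,h}-\GI_3|_F<1$ a.e., hence $\det(\nabla v_{n,h})>0$ and $v_{n,h}\in\GV_\varepsilon$ with $\wh W_\varepsilon(x,\nabla v_{n,h})=\GQ(x/\varepsilon,\frac{1}{2}((\nabla v_{n,h})^T\nabla v_{n,h}-\GI_3))$. The identity \eqref{MainSeq2} gives
\begin{equation*}
\frac{1}{2h}\bigl((\nabla v_{n,h})^T\nabla v_{n,h}-\GI_3\bigr)=e(\Gu_n)+\frac{h}{2}(\nabla\Gu_n)^T\nabla\Gu_n\;\xrightarrow[h\to 0]{}\;e(\Gu_n)\;\;\text{strongly in }L^\infty(\O_\varepsilon)^{3\times3}.
\end{equation*}
Therefore $\lim_{h\to 0}\GJ_{\varepsilon,h}(v_{n,h})/h^2=\GJ^{Lin}_\varepsilon(\Gu_n)$, which yields $\limsup_{h\to 0}m_{\varepsilon,h}/h^2\le\GJ^{Lin}_\varepsilon(\Gu_n)$.

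\textbf{Step 3 (conclusion).} Letting $n\to\infty$ and using strong $H^1$ convergence of $\Gu_n$ to $\Gu_\varepsilon$ (which gives continuity of both the quadratic elastic part and the linear load term of $\GJ^{Lin}_\varepsilon$), one obtains $\limsup_{h\to 0}m_{\varepsilon,h}/h^2\le\GJ^{Lin}_\varepsilon(\Gu_\varepsilon)=m_\varepsilon$. Combined with Step 1, $\lim_{h\to 0}m_{\varepsilon,h}/h^2=m_\varepsilon$, which is the claim (the factor $\varepsilon^5$ on both sides of the stated equality is a fixed constant). The main obstacle is the recovery construction: one must guarantee that $v_{n,h}$ is admissible, i.e.\ injective with $\det\nabla v_{n,h}>0$, so that $\wh W_\varepsilon$ takes its finite branch and the strong $L^\infty$ convergence of the Green--St.~Venant tensor is actually available; this is precisely why the smoothing step into $W^{1,\infty}$ and the use of the diagonal $h$-range depending on $n$ is necessary.
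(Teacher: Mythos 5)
Your proposal is correct and follows essentially the same route as the paper: weak lower semicontinuity of the quadratic energy together with the convergences of Lemma \ref{ConML} for the $\liminf$, and a recovery sequence $\id+h\Gu_n$ with $\Gu_n$ smooth (the paper uses $C^1(\overline{\O_\e})^3$, you use $W^{1,\infty}$, an immaterial choice) to make the determinant constraint hold for small $h$ and pass to the limit strongly, followed by density in $H^1$. The only cosmetic difference is organizational: the paper proves $\limsup\le\GJ^{Lin}_\e(\Gu)$ for all $\Gu$ in the dense class and then infimizes, whereas you approximate the minimizer $\Gu_\e$ directly; both reductions close the argument identically.
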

\begin{proof}
	The following proof uses a form of $\Gamma$-convergence.\\[1mm]
	We set 
	$$\liminf_{h\to0}{m_{\e,h}\over h^2\e^5} = m_\e.$$
	{\bf Step 1:} In this step, we prove that there exist a unique $\Gu_\e\in \GU_\e$ such that
	\begin{equation}\label{LinUni}
		\GJ_\e^{Lin}(\Gu_\e)=m_\e=\inf_{\Gv\in\GU_\e}\GJ_\e^{Lin}(\Gv),
	\end{equation}
	where
	$$ \GJ_\e^{Lin}(\Gv)=\int_{\O_\e}\GQ\left({x\over \e}, e(\Gv)\right)\,dx-\int_{\O_\e}f_\e\cdot\Gv\,dx.$$
	The variational formulation of the total elastic energy \eqref{Eq1015} for applied forces $f_\e\in L^2(\O_\e)^3$ is given by
	\begin{equation}\label{V01}
		\left\{\begin{aligned}	
		&\hbox{Find $\Gu_\e\in \GU_\e$ such that },\\
			&\hskip 10mm\int_{\O_\e}\sigma^\e_{ij}(\Gu_\e)e_{ij}(w)\,dx=\int_{\O_\e} f_\e\cdot w\,dx,\quad \forall w\in\GU_\e,
		\end{aligned}\right.
	\end{equation}
	where the stress tensor is given by
	$$\sigma^\e_{ij}(w)= a^\e_{ijkl} e_{kl}(w),\quad \text{for all $w\in \GU_\e$}.$$
	We define the bilinear form
	\begin{equation}\label{B01} 
		\Ga_\e(u,w)=\int_{\O_\e}\sigma^\e_{ij}(u)e_{ij}(w)\,dx,\quad \text{for all $u,w$ in $\GU_\e$}.
	\end{equation}
	Observe that since $a_{ijkl}\in L^\infty(\Yc)$ and satisfy the condition \eqref{S33}, we have that bilinear form \eqref{B01} is bounded and symmetric on $\GU_\e\X\GU_\e$. Using inequality \eqref{CoeCon}, we have
	$$K_1\|e(w)\|^2_{L^2(\O_\e)}\leq \Ga_\e(w,w),\quad \forall w\in \GU_\e,\quad \text{for a.e. $x\in\O_\e$}.$$
	Since $w\in \GU_\e$, we have using Korn-type inequalities \eqref{KornPUD1} that there exist a constant $K(\e)>0$ (depending on $\e$) such that
	$$K(\e)\|w\|^2_{H^1(\O_\e)} \leq K_1\|e(w)\|^2_{L^2(\O_\e)}\leq \Ga_\e(w,w),$$
	which imply the bilinear form $\Ga_\e$ satisfies the coerciveness condition.  
	Consequently, by applying the Lax-Milgram lemma with the assumption of the forces that $f_\e\in L^2(\O_\e)^3$, there exist a unique solution $\Gu_\e\in\GU_\e$ for the elasticity problem \eqref{V01} (which is also the unique minimizer of \eqref{Eq1015}). This gives \eqref{LinUni}, which completes Step 1. \\[1mm]	
		{\bf Step 2:} In this step we show that 
	$$\liminf_{h\to0}{m_{\e,h}\over h^2\e^5}\geq m_\e.$$
	Consider a minimizing sequence $\{v_{h,\e}\}_h$ of admissible deformations in $\GV_\e$ satisfying 
	$$\liminf_{h\to 0} {m_{\e,h}\over h^2}=\lim_{h\to 0}{1\over h^2}\GJ_{\e,h}(v_{h})$$ where
	$$\GJ_{\e,h}(v)= \int_{\O_{\e}}\wh{W}_{\e}(x,\nabla v)\,dx- \int_{\O_{\e}} f_{\e,h}\cdot (v-\id)\, dx.$$
	Since each $v_{h,\e}$ is an admissible deformations, so we have $\GJ_{\e,h}(v_{h,\e})\leq \GJ_{\e,h}(\id)=0$.\\	
	Then, we have convergences as in Lemma \ref{ConML}. Similarly, proceeding as in Step 2 of the proof of Theorem \ref{MainConR} give 
	\begin{multline*}
		\liminf_{h\to 0}{m_{\e,h}\over h^2}=	\liminf_{h\to0}{\GJ_{\e,h}(v_{h,\e})\over h^2}\\
		\geq \liminf_{h\to0}\left({1\over h^2}\int_{\O_\e}\GQ\left({x\over\e},{1\over2}\Big(\GI_3+h\nabla \Gu_{h,\e}\Big)^T\Big(\GI_3+h\nabla \Gu_{h,\e}\Big)-\GI_3\right)\,dx-{1\over h^2}\int_{\O_\e}f_{\e,h}\cdot (h\Gu_{h,\e})\,dx\right)\\
		=\int_{\O_\e}\GQ\left({x\over \e},e(\Gu_\e)\right)\,dx-\int_{\O_\e}f_\e\cdot \Gu_\e\,dx=\GJ^{Lin}_\e(\Gu_\e).
	\end{multline*}
	This completes the proof of Step 2.\\[1mm]
		{\bf Step 3:} In this step, we show that 
	$$\limsup_{h\to 0} {m_{\e,h}\over h^2}\leq \GJ_\e^{Lin}(\Gu),\qquad \forall \;\Gu\in\GU_\e.$$
	It is enough to prove the above inequality for all $\Gu\in\GU_\e^*= \GU_\e\cap \C^1(\overline{\O_\e})^3$, since $\GU^*_\e$ is dense in $\GU_\e$.\\
	So, we prove that for any admissible $\Gu\in \GU^*_\e$, there exists a sequence $\{\Gw_{h}\}_h$ of admissible elements in $\GV^*_\e=\GV_\e\cap  \C^1(\overline{\O_\e})^3$  such that
	$${1\over h}(\Gw_{h}-\id)\to \Gu\quad\text{strongly in}\;\; \GU^*_\e$$
	and 
	$$\limsup_{h\to 0}{m_{h}\over h^2}\leq \lim_{h\to0}{\GJ_{\e,h}(\Gw_{h})\over h^2}={\GJ^{Lin}_{\e}(\Gu)}.$$
	%	{\clm \large Two arguments for det$(\nabla v)(x)>0$}\\
	%	{\bf Argument 01:}	
	Let $\Gu\in \GU_\e\cap \C^1(\overline{\O_\e})^3$ is an re-scaled displacement, then we define the corresponding sequence of deformations by
	$$\Gw_h=\id+h\Gu.$$
	Observe that
	$$\Gw_h\in\GV^*_\e\quad\text{for all $h$}\quad\text{and}\quad\|\nabla \Gw_h-\GI_3\|_{L^\infty(\O_\e)}=\|h\Gu\|_{L^\infty(\O_\e)}\leq Ch.$$
	Here, the constant do not depend on $h$. This implies for small enough $h$, we have $|\nabla \Gw_h-\GI_3|_F<1$ a.e. in $\O_\e$. As a consequence, we get for all $h>0$
	$$\text{det}(\nabla \Gw_h)(x) >0,\quad \text{for a.e.}\;\;x\in \O_\e.$$
	This leads to
	$$\wh{W}_\e(x,\nabla \Gw_h)=\GQ\left({x\over \e},{\bf E}(\nabla \Gw_h )\right),\quad \text{for a.e.}\;\;x\in\O_\e.$$		
	%So, we define the following sequence $\{\Gu_{h}\}_h$ of re-scaled displacements as
	%	$$\Gu_{h}(x)=\Gu,\quad \text{for all}\;\;x\in\O_\e.$$
	So, the above setting, we have as $h$ tends to $0$ ($\e$ being fixed)
	\begin{equation}\label{C11}
		\begin{aligned}
			{1\over h}(\Gw_{h}-\id)&\to \Gu\quad&&\text{strongly in $L^\infty(\O_\e)^3$},\\
			{1\over 2h}\left((\nabla \Gw_h)^T(\nabla \Gw_h)-\GI_3\right)&\to e(\Gu),\quad &&\text{strongly in $L^\infty(\O_\e)^{3\X3}$},
		\end{aligned}
	\end{equation}
	which gives
	\begin{equation}\label{C12}
		\begin{aligned}
			\lim_{h\to0}{1\over h^2}{\GJ_{\e,h}(\Gw_{h})}&=\lim_{h\to0}\left({1\over h^2}\int_{\O_\e}\GQ\left({x\over \e},{1\over 2}\big((\nabla \Gw_h)^T(\nabla \Gw_h)-\GI_3\big)\right)\,dx-{1\over h}\int_{\O_\e}f_\e\cdot(\Gw_{h}-\id)\,dx\right)\\
			&=\int_{\O_\e}\GQ\left({x\over \e},e(\Gu)\right)dx-\int_{\O_\e}f_\e\cdot\Gu\, dx.
		\end{aligned}
	\end{equation}
 Note that
		$$m_{\e,h} \leq \GJ_{\e,h}(\Gw_{h}).$$
	Then, from the above inequality along with convergence \eqref{C11}-\eqref{C12}, we obtain
	$$\limsup_{h\to0}{m_{\e,h}\over h^2}\leq \lim_{h\to0}{1\over h^2}\GJ_{\e,h}(\Gw_{h})=\GJ^{Lin}_\e(\Gu),\quad \forall \; \Gu\in\GU^*_\e.$$
	This completes the proof of Step 2.\\[1mm]
%	Hence, we obtain
%	\begin{equation}
%		m_\e=\inf_{\Gu\in \GU_\e}\GJ_\e^{Lin}(\Gu)=\GJ_\e^{Lin}(\Gv_\e).
%	\end{equation}
%	 for the minimization problem \eqref{Eq1015}, satisfying \eqref{LinUni}.\\
	Combining steps 1, 2 and 3, we obtain
	$$\GJ^{Lin}_\e(\Gu_\e)={m_\e}\leq \liminf_{h\to0}{m_{\e,h}\over h^2}\leq \limsup_{h\to0}{m_{\e,h}\over h^2}\leq \GJ_\e^{Lin}(\Gu_\e),$$
	which gives the required result.
\end{proof}
Now, we present the main result of the section, where we perform simultaneous homogenization dimension reduction ($\e\to0$) of the linearized energy \eqref{Eq1015}.
\begin{theorem}\label{TH03}
	We have
	\begin{equation*}
		m_L=\lim_{\e\to0}\lim_{h\to 0}{m_{\e,h} \over h^2\e^{5}} = \min_{(\Vc,\wh{\fv})\in \D} \GJ(\Vc,\wh{\fv})=\GJ(\Uc,\wh \fu),
	\end{equation*}
	where 
	$$\D=\D_0\X L^2(\o;H^1_{per,0}(\Yc))^3,\quad\text{with}\quad\D_0=\{\Uc\in H^1(\o)^2\X H^2(\o)\;|\;\Uc=\partial_\alpha\Uc_3=0\;\text{a.e. on $\gamma$}\},$$
	and
	$$\GJ(\Uc,\wh{\fu})=\int_{\o\X\Yc}\GQ\big(y,E^{Lin}(\Uc)+e_y(\wh{\fu})\big)\;dydx'-\int_{\o} F\cdot\Uc\;dx'.$$
\end{theorem}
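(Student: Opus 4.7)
My plan is to reduce Theorem \ref{TH03} to a simultaneous homogenization and dimension reduction result for the \emph{linear} elasticity problem \eqref{Eq1015}, and then to run a $\Gamma$-convergence argument analogous in structure to the proof of Theorem \ref{MainConR}, but substantially simpler because no quadratic-in-gradient remainder term needs to be controlled. By Theorem \ref{TH4} the inner limit gives $\lim_{h\to 0} m_{\e,h}/(h^2\e^5) = \GJ^{Lin}_\e(\Gu_\e)/\e^5$ where $\Gu_\e \in \GU_\e$ is the unique minimizer. Testing the variational formulation \eqref{V01} against $\Gu_\e$ itself, using \eqref{CoeCon} and Korn-type inequalities \eqref{KornPUD1} together with the scaling \eqref{FA01} of $f_\e$, the same computation as in Lemma \ref{L7} gives
\begin{equation*}
K_1\|e(\Gu_\e)\|^2_{L^2(\O_\e)} \leq \int_{\O_\e} f_\e\cdot\Gu_\e\,dx \leq C\e^{a+3/2}\|f\|_{L^2(\o)}\|e(\Gu_\e)\|_{L^2(\O_\e)},
\end{equation*}
hence $\|e(\Gu_\e)\|_{L^2(\O_\e)} \leq C\e^{5/2}$ and, via \eqref{KornPUD1}, $\|\Gu_\e\|_{H^1(\O_\e)}\le C\e^{3/2}$. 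Consequently $-c_0 \leq m_\e/\e^5 \leq 0$, so up to a subsequence $m_\e/\e^5$ has a limit.

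For the liminf inequality, I apply the Kirchhoff–Love decomposition of Lemma \ref{DeComKL1} to the re-scaled minimizer $\Gu_\e$, obtaining exactly the estimates \eqref{EWAgu1} and \eqref{EWRUO11} (with $\Gu_{h,\e}$ replaced by $\Gu_\e$). Lemma \ref{A11} together with the unfolding identity $\nabla_y\Pi_\e = \e\Pi_\e\nabla$ then yields, along a subsequence, convergences of the form \eqref{CFU11}–\eqref{CFfu11}: there exist $(\Uc,\wh\fu)\in\D$ such that $\e^{-2}\Uc_{m,\e}\rightharpoonup \Uc_m$ in $H^1(\o)^2$, $\e^{-1}\Uc_{3,\e}\rightharpoonup \Uc_3$ in $H^2(\o)$, and
\begin{equation*}
\frac{1}{\e^2}\Pi_\e(e(\Gu_\e)) \;\rightharpoonup\; E^{Lin}(\Uc)+e_y(\wh\fu)\qquad\text{weakly in }L^2(\o\X\Yc)^{3\X3}.
\end{equation*}
Since the integrand $\GQ$ is convex in its second argument, a change of variables via $\Pi_\e$ (handling the boundary strip $\Lambda_\e\X\Yc$ using $|\Lambda_\e|\to 0$) plus weak lower semi-continuity gives $\liminf_\e \e^{-5}\int_{\O_\e} \GQ(x/\e,e(\Gu_\e))\,dx \geq \int_{\o\X\Yc}\GQ(y,E^{Lin}(\Uc)+e_y(\wh\fu))\,dx'dy$, while the scaling of $f_\e$ and strong $L^2$ convergence of the in-plane components of $\Uc_\e/\e^2$ give $\e^{-5}\int_{\O_\e} f_\e\cdot\Gu_\e\,dx \to |\Yc|\int_\o f\cdot\Uc\,dx'$. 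Combining these yields $\liminf_\e m_\e/\e^5 \geq \GJ(\Uc,\wh\fu)\ge m_L$.

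For the limsup inequality I use the same recovery-sequence construction as in Step 2 of the proof of Theorem \ref{MainConR}. Take smooth approximations $(\Uc_n,\wh\fu_n)$ of a minimizer $(\Uc,\wh\fu)$ of $\GJ$ with regularities as in \eqref{L96}–\eqref{L97}, and define $U_{n,\e}\in\GU_\e$ exactly by formula \eqref{L98} (without the outer factor $h$, which plays no role after the linearization has already been carried out). Since $U_{n,\e}$ is a linear admissible displacement, one has $m_\e \leq \GJ^{Lin}_\e(U_{n,\e})$. Explicit differentiation of \eqref{L98} together with $\nabla_y\Pi_\e = \e\Pi_\e\nabla$ gives $\e^{-2}\Pi_\e(e(U_{n,\e})) \to E^{Lin}(\Uc_n)+e_y(\wh\fu_n)$ strongly in $L^\infty(\o\X\Yc)^{3\X3}$ as $\e\to 0$, so $\limsup_\e \e^{-5}\GJ^{Lin}_\e(U_{n,\e}) = \GJ(\Uc_n,\wh\fu_n)$; letting $n\to\infty$ and using continuity of $\GJ$ with respect to the strong topology of $\D$ gives $\limsup_\e m_\e/\e^5 \leq \GJ(\Uc,\wh\fu) = m_L$.

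Combining the three steps yields $\lim_\e m_\e/\e^5 = m_L = \GJ(\Uc,\wh\fu)$, which together with Theorem \ref{TH4} is the claimed iterated limit. The main technical obstacle is the liminf step, and specifically the passage to the limit inside $\int_{\O_\e}\GQ(x/\e,e(\Gu_\e))$: one must split the integral between $\wh\o_\e$ and the boundary strip $\Lambda_\e$, rewrite the first part using $\Pi_\e$, and then invoke weak lower semi-continuity for a convex integrand depending on the oscillating variable $y$. Apart from that, everything is a direct linear-problem analogue of machinery already established in Section \ref{Sec08}, and the uniqueness of the minimizer $(\Uc,\wh\fu)$ (guaranteed by Lemma \ref{NEq1} and Lax–Milgram) shows that the full sequence converges.
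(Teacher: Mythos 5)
Your proposal is correct and follows the same overall architecture as the paper: use Theorem \ref{TH4} to reduce the iterated limit to the single limit $\lim_{\e\to 0} m_\e/\e^5$, derive $\|e(\Gu_\e)\|_{L^2(\O_\e)}\le C\e^{5/2}$ (the paper gets it from $-c_0\le m_\e/\e^5\le 0$ rather than from testing the weak form, but both give the same bound), apply the Kirchhoff--Love decomposition of Lemma \ref{DeComKL1} to $\Gu_\e$, and pass to the limit via the unfolding operator. Where you diverge is in the last step. The paper dispatches the simultaneous homogenization and dimension reduction of the \emph{linear} problem \eqref{Eq1015} by citing Theorem 11.19 of \cite{CDG}: this directly identifies the weak limit $(\Uc,\wh\fu)$ of $\e^{-2}\Pi_\e(e(\Gu_\e))$ as the unique solution of the re-scaled unfolded variational problem, for the whole sequence, and then convergence of the energy follows from the variational characterizations (a point the paper leaves implicit). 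You instead give a self-contained $\Gamma$-convergence argument in the style of the proof of Theorem \ref{MainConR}: a liminf inequality via decomposition, unfolding, the $\Lambda_\e$ boundary-strip splitting, and weak lower semi-continuity of the convex quadratic integrand; and a limsup inequality via the explicit recovery sequence \eqref{L98} (dropping the redundant $h$). This buys you a closed, paper-internal proof of the linear SHD step that does not rely on \cite{CDG}, at the cost of repeating machinery already exercised in Section \ref{Sec08}; it also yields convergence of minima directly and recovers whole-sequence convergence from uniqueness (Lemma \ref{NEq1} plus Lax--Milgram) rather than getting whole-sequence convergence of minimizers as a primitive from the cited theorem. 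Both routes are valid, and the estimates and convergences you use ($\|e(\Gu_\e)\|\le C\e^{5/2}$, the chain \eqref{EWAgu1}--\eqref{EWRUO11}, and Lemma \ref{CFuu11}-type convergences) are the same as the paper's.
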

\begin{proof}
	From the Theorem \ref{TH4}, we have
	$$\lim_{h\to 0} {m_{\e,h}\over h^2}=m_\e=\GJ_\e(\Gu_\e),$$
	which implies (for $\e$ being fixed)
	\begin{equation}\label{Lin02}
		\lim_{h\to 0}{m_{\e,h}\over h^2\e^{5}}={1\over \e^{5}}\lim_{h\to 0}{m_{\e,h}\over h^2}={m_\e\over \e^{5}}={\GJ_\e(\Gu_\e)\over \e^5},
	\end{equation}
	where $m_\e$ is given by \eqref{Eq1015} and $\Gu_\e\in\GU_\e$ is the unique minimizer of \eqref{Eq1015}.\\
	%	From Step 1 of the proof of Theorem \ref{TH4}, we have there exist a unique $\Gu_\e\in\GU_\e$ such that
	%	$$m_\e=\GJ_\e^{Lin}(\Gu_\e).$$
	It is enough to show that
	\begin{equation}\label{I01}
		\lim_{\e\to0}{\GJ_\e(\Gu_\e)\over \e^{5}}=\min_{(\Vc,\wh{\fv})\in \D} \GJ(\Vc,\wh{\fv})= \GJ(\Uc,\wh \fu).
	\end{equation}
	From \eqref{Req65}, we have
	$$-c_0\leq {m_{\e,h}\over h^2\e^{5}}\leq 0.$$
	So, from the linearization result Theorem \ref{TH4}, we have
	$$-c_0\leq {1\over \e^{5}}\lim_{h\to0}{m_{\e,h}\over h^2}={m_\e\over \e^{5}}={\GJ_\e^{Lin}(\Gu_\e)\over \e^{5}}\leq 0,$$
	which imply along with \eqref{eq64} and \eqref{AFB1}--\eqref{Eq74}
	$$\begin{aligned}
		K_1\|e(\Gu_\e)\|^2_{L^2(\O_\e)}\leq \int_{\O_\e}\GQ\left({x\over \e},e(\Gu_\e)\right)\,dx\leq \int_{\O_\e} f_\e\cdot\Gu_\e\,dx
		\leq C_0\e^{{5/2}}\|e(\Gu_\e)\|_{L^2(\O_\e)},
	\end{aligned}$$
	which gives
	\begin{equation}\label{814}
		\|e(\Gu_\e)\|_{L^2(\O_\e)}\leq C\e^{5/2}.
	\end{equation}
	Therefore, for any fixed $\e$, we are allowed to use the decomposition given in the Lemma \ref{DeComKL1} for the unique minimizer displacement $\Gu_{\e}$. Then, we have the estimates as in \eqref{EWAgu1}--\eqref{EWRUO11} and convergences as in Lemma \ref{CFuu11}. Then, proceeding as in Theorem 11.19 in \cite{CDG}, we have for the whole sequence $\{\e\}_\e$ and the unique minimizer $\Gu_\e$ of the linear elasticity problem \eqref{LinUni} (also \eqref{V01}), the following convergence holds
		\begin{equation}\label{517}
		\begin{aligned}
			{1\over \e^{2}}\Pi_\e\left(e(\Gu_\e)\right)&\rightharpoonup E^{Lin}(\Uc)+e_y(\wh{\fu})\quad &&\text{weakly in}\quad L^2(\o\X \Yc)^{3\X3},
		\end{aligned}
	\end{equation}  
	 where $(\Uc,\wh\fu)\in\D$ is the unique solution of the re-scaled unfolded problem
$$
{1\over |\Yc|}\int_{\o\X\Yc} a_{ijkl}\left(E^{Lin}_{ij}(\Uc)+e_{ij,y}(\wh\fu)\right)
\left(E^{Lin}_{kl}(\Vc)+e_{kl,y}(\wh \fv)\right)\,dx'dy=\int_\o f\cdot \Uc\,dx',\quad \forall\, (\Vc,\wh \fv)\in\D. 
$$
	 Finally, we obtain \eqref{I01}. 	This completes the proof.
\end{proof}
			
	Proceeding as in Subsection \ref{Ssec93}, we have the final result of this section
	\begin{theorem}
		Under the assumption on the forces \eqref{AFB1}, the problem
		\begin{equation*}
			\min_{\Vc\in\D_0} \GJ_{L}^{hom}(\Vc)
		\end{equation*}	
		admits a unique solution. Moreover, one has
		\begin{equation}\label{EQ1052}
			m_L=	\lim_{\e\to0}{m_{\e} \over \e^{5}} = \min_{(\Vc,\wh{\fv})\in \D} \GJ(\Vc,\wh{\fv})= 	\min_{\Vc\in\D_0} \GJ_{L}^{hom}(\Vc)=\GJ_L^{hom}(\Uc),
		\end{equation} 
		where $\Uc\in\D_0$ is the unique minimizer.
	\end{theorem}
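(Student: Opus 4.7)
The plan is to leverage Theorem \ref{TH03} and then reproduce, in the linearization-first setting, the cell-problem reduction already carried out in Subsection \ref{Ssec93}. The first equality in \eqref{EQ1052}, namely $m_L = \lim_{\e\to 0} m_\e / \e^{5} = \min_{(\Vc,\wh\fv)\in\D} \GJ(\Vc,\wh\fv)$, is precisely the content of Theorem \ref{TH03}. What remains is to (i) show that the inner minimization of $\GJ$ over $\wh\fv$ at fixed $\Uc$ produces the homogenized functional $\GJ_L^{hom}(\Uc)$ defined by \eqref{EQ949}--\eqref{EQ950}, and (ii) establish that $\GJ_L^{hom}$ admits a unique minimizer on $\D_0$.

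For (i), I would fix $\Uc \in \D_0$ and, pointwise in $x' \in \o$, look at the variational problem \eqref{Eq1042}. Since $E^{Lin}(\Uc)(x',y_3)$ is affine in $y_3$, with membrane part a linear combination of $M^{n p}$ with coefficients $e_{np}(\Uc_m)(x')$ and bending part a linear combination of $-y_3 M^{np}$ with coefficients $\partial_{np} \Uc_3(x')$, the linearity of the cell problems \eqref{Eq125} immediately yields the superposition formula \eqref{Eq1044} for $\wh\fu$. Substituting this expression into $\GJ(\Uc, \wh\fu)$ and using the symmetries of $a_{ijkl}$ together with the orthogonality relations provided by \eqref{Eq125} (tested against $\chi^m_{n'p'}$ and $\chi^b_{n'p'}$) collapses the double integral in $y$ to the quadratic form $\GJ^{hom}(\Uc)$ with the homogenized coefficients \eqref{EQ1048}. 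After accounting for the $|\Yc|$ normalization, one obtains $\min_{\wh\fv \in L^2(\o;H^1_{per,0}(\Yc))^3}\GJ(\Uc,\wh\fv) = \GJ_L^{hom}(\Uc)$, whence also $\min_{(\Vc,\wh\fv)\in \D}\GJ = \min_{\Vc\in\D_0}\GJ_L^{hom}$.

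For (ii), the existence and uniqueness of a minimizer will be obtained by applying the Lax-Milgram lemma to $\GJ_L^{hom}$ viewed as a functional on the Hilbert space $\D_0$, equipped with its natural $H^1(\o)^2 \times H^2(\o)$ inner product. Boundedness of the bilinear part follows from $a_{ijkl}\in L^\infty(\Yc)$. The crucial point is coercivity of the homogenized quadratic form on pairs of symmetric $2\times 2$ matrices $(e(\Uc_m), D^2\Uc_3)$, which I would invoke from Lemma 11.22 and Remark 11.21 of \cite{CDG}. Combined with the classical Korn inequality for $\Uc_m \in H^1(\o)^2$ with trace zero on $\gamma$, and with the Poincar\'e-type inequality controlling $\|\Uc_3\|_{H^2(\o)}$ by $\|D^2\Uc_3\|_{L^2(\o)}$ under the clamped conditions $\Uc_3 = \partial_\alpha \Uc_3 = 0$ on $\gamma$, one obtains full coercivity of the bilinear form on $\D_0$. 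The load term $|\Yc|\int_\o f\cdot \Uc\,dx'$ is linear and continuous on $\D_0$, so Lax-Milgram provides a unique minimizer $\Uc$.

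The main obstacle is step (ii), specifically the coercivity of the homogenized quadratic form on the pair $(e(\Uc_m), D^2\Uc_3)$: pointwise coercivity of $a_{ijkl}$ is not enough by itself, since the homogenized tensor couples the membrane and bending contributions through the correctors $\chi^m_{np}$ and $\chi^b_{np}$. Extracting a positive lower bound requires the structural argument of \cite{CDG}, which in spirit parallels the norm-equivalence statement of Lemma \ref{NEq1} that was already used in Subsection \ref{Ssec93} for the SHDL regime. Once (i) and (ii) are in place, the full equality chain $m_L = \min_\D \GJ = \min_{\D_0}\GJ_L^{hom} = \GJ_L^{hom}(\Uc)$ of \eqref{EQ1052} follows by concatenating Theorem \ref{TH03} with the cell-problem reduction and the uniqueness statement.
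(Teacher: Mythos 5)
Your proposal reproduces the paper's argument: the paper proves this theorem by simply writing ``Proceeding as in Subsection \ref{Ssec93}, we have the final result of this section,'' and Subsection \ref{Ssec93} contains exactly your steps (i) and (ii) — the linear cell problems \eqref{Eq125} and superposition formula \eqref{Eq1044} for $\wh\fu$, substitution into $\GJ$ to obtain the homogenized coefficients \eqref{EQ1048} and the functional $\GJ_L^{hom}$ in \eqref{EQ949}--\eqref{EQ950}, and then coercivity via Lemma 11.22 / Remark 11.21 of \cite{CDG} followed by Lax--Milgram. Your identification of the first equalities with Theorem \ref{TH03} (via \eqref{Lin02}) is also how the paper assembles the chain, so the two proofs coincide in substance.
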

%\section{Commutativity between two asymptotic analysis}
Finally, we present the main result of this paper. Using the convergences \eqref{Lin02}, \eqref{EQ1052}, and \eqref{Main01}, we have
\begin{theorem}\label{MainRe}
We have that the two problems asymptotic analysis converge to the same limit, i.e First linearization followed by simultaneous homogenization dimension reduction give the same resultant limit when linearization, homogenization, and dimension reduction are performed simultaneously, which is given by (for $a\geq 1$)
	\begin{equation*}
		\lim_{(\e,h)\to(0,0)}{m_{\e,h}\over h^2\e^{2a+3}}=\lim_{\e\to0}\lim_{h\to0}{m_{\e,h}\over h^2 \e^{2a+3}}=\GJ_L^{hom}(\Uc)=m_L,
	\end{equation*}
			where $\Uc\in\D_0$ is the unique minimizer.
\end{theorem}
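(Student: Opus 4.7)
The plan is to assemble Theorem \ref{MainRe} as a direct corollary of the three asymptotic results already established in the paper, namely Theorem \ref{MainConR} (simultaneous limit SHDL), Theorem \ref{TH4} (linearization at fixed $\e$), and Theorem \ref{TH03} (linearization followed by SHD), together with the homogenization identity coming from the cell problems of Subsection \ref{Ssec93}. No new decomposition, no new unfolding computation, and no new $\Gamma$-convergence argument is needed at this stage; the whole content of the theorem is the commutativity of the two orders of passage to the limit, and each side has already been identified separately with $\GJ(\Uc,\wh\fu)$.

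First, I would recall that by Theorem \ref{MainConR} applied with $a=1$ (the case $a>1$ being declared analogous in the paragraph right after \eqref{Req65}),
\begin{equation*}
\lim_{(\e,h)\to(0,0)}\frac{m_{\e,h}}{h^2\e^{2a+3}}=\min_{(\Vc,\wh\fv)\in\D}\GJ(\Vc,\wh\fv)=\GJ(\Uc,\wh\fu)=m_L.
\end{equation*}
Next, I would invoke Theorem \ref{TH4} to write, for each fixed $\e>0$,
\begin{equation*}
\lim_{h\to 0}\frac{m_{\e,h}}{h^2\e^{2a+3}}=\frac{m_\e}{\e^{2a+3}}=\frac{\GJ^{Lin}_\e(\Gu_\e)}{\e^{2a+3}},
\end{equation*}
which is exactly \eqref{Lin02}, and then apply Theorem \ref{TH03} to pass to the limit $\e\to 0$, obtaining
\begin{equation*}
\lim_{\e\to 0}\lim_{h\to 0}\frac{m_{\e,h}}{h^2\e^{2a+3}}=\min_{(\Vc,\wh\fv)\in\D}\GJ(\Vc,\wh\fv)=\GJ(\Uc,\wh\fu).
\end{equation*}
Both iterated processes therefore produce the same limit object $\GJ(\Uc,\wh\fu)$ with the same minimizing pair $(\Uc,\wh\fu)\in\D$.

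Finally, I would identify this common value with the homogenized functional. The cell problems \eqref{Eq125} and the representation \eqref{Eq1044} express $\wh\fu$ in terms of $e(\Uc_m)$ and $D^2\Uc_3$ via the correctors $\chi^m_{\alpha\beta},\chi^b_{\alpha\beta}$; substituting into $\GJ$ and using the homogenized coefficients \eqref{EQ1048} gives
\begin{equation*}
\GJ(\Uc,\wh\fu)=\GJ^{hom}_L(\Uc),
\end{equation*}
with $\Uc\in\D_0$ the unique minimizer of $\GJ^{hom}_L$ over $\D_0$, which is equation \eqref{Main01} (equivalently \eqref{EQ1052}). Chaining these three identifications yields
\begin{equation*}
\lim_{(\e,h)\to(0,0)}\frac{m_{\e,h}}{h^2\e^{2a+3}}=\lim_{\e\to 0}\lim_{h\to 0}\frac{m_{\e,h}}{h^2\e^{2a+3}}=\GJ^{hom}_L(\Uc)=m_L,
\end{equation*}
as claimed.

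Since every analytical ingredient is already proved, there is really no obstacle; the only thing worth being a little careful about is the order of quantifiers in the iterated limit. Concretely, I would make sure to record that the uniform lower bound $-c_0\le m_{\e,h}/(h^2\e^{2a+3})\le 0$ from \eqref{Req65} is independent of both $h$ and $\e$, so that the inner limit $\lim_{h\to 0}$ in Theorem \ref{TH4} exists and equals $m_\e/\e^{2a+3}$ uniformly enough for Theorem \ref{TH03} to be applied to the resulting $\e$-sequence; this is already the content of \eqref{814} and the argument following \eqref{Lin02}. With that remark the commutativity follows without any further estimate.
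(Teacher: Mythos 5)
Your proof is correct and follows exactly the paper's route: the theorem is a direct corollary of Theorems \ref{MainConR}, \ref{TH4} and \ref{TH03}, together with the cell-problem identification $\GJ(\Uc,\wh\fu)=\GJ^{hom}_L(\Uc)$ from Subsection \ref{Ssec93}, and the paper itself states the result in one line by citing \eqref{Lin02}, \eqref{EQ1052} and \eqref{Main01}. Your closing worry about needing the inner $h\to 0$ limit to hold ``uniformly enough'' in $\e$ is unnecessary (though harmless): the iterated limit is computed sequentially, so Theorem \ref{TH4} at each fixed $\e$ produces a well-defined $\e$-sequence $m_\e/\e^{2a+3}$, and Theorem \ref{TH03} is then applied to that sequence directly, with no uniformity required.
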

\section{Extension to periodic perforated plates}\label{Last}
In this section, the extension of our results to a general periodic perforated composite plate is undertaken.  We set the following: 
\begin{itemize}
	\item $\Yc^*$ be an  open connected  subset of $\Yc$ with Lipschitz boundary such that interior $\big(\overline{\Yc^*}\cup(\overline{\Yc^*}+\Ge_\alpha)\big)$  is connected and has a Lipschitz boundary for $\alpha=1,2$;
	%		\item  $Y^M_\kappa=\Yc\setminus\overline{Y}^B_\kappa$ is such that  interior$\big(\overline{Y^M_\kappa}\cup(\overline{Y^M_\kappa}+\Ge_\alpha)\big)$ is connected for $\alpha=1,2$;
	\item  $\Yc^*$ is  the reference cell of the perforated plate;
	\item $\Xi_\e=\{\xi\in\Z^2\X\{0\}\;|\;\e(\xi+\Yc)\cap\O_\e\neq \emptyset\}$;
	%			\item $\Xi_{\e,\alpha}=\{\xi\in\Xi_\e\;|\;\Xi+\Ge_\alpha\in\o_\e\}$, $\alpha=1,2$;
	\item $\O_\e^\xi=\e(\xi+\Yc^*)$ and  $\O^*_\e=\ds \bigcup_{\xi\in\Xi_\e}\O_\e^\xi $,
	%			\item 
\end{itemize}
\begin{remark}
	Here note the following: $\O_\e^*$ is bounded open connected set (a bounded domain), which  is a $3$-dimensional but only periodic in two directions, in the third direction it is thin with thickness $2\e$.
%	{\clb Here, you need to read my papers on the homogenization of domains with holes.}
\end{remark}
Here $\O_\e^*\subset \O_\e$ is the periodic perforated plate such that we have the extension result which is a consequence of the extension result given in see [\cite{GOW2}, Lemma 2.1]. 
\begin{theorem}
	For $v\in H^1(\O^*_\e)^3$ there exist a $\wt{v}\in H^1(\O_\e)^3$ such that
	$$\wt{v}_{|\O_\e^*}=v\quad \text{and}\quad \|\di(\nabla \wt{v},\SO(3))\|_{L^2(\O_\e)}\leq c\|\di(\nabla v, \SO(3))\|_{L^2(\O_\e^*)}.$$
	The constant is independent of $\e$.
\end{theorem}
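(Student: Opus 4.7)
The plan is to adapt the cell-wise extension strategy of \cite{GOW2}, Lemma 2.1, which is already cited in the statement. The key point is that the quantity $\|\di(\nabla v,\SO(3))\|_{L^2}$ is invariant under adding a rigid motion $x\mapsto Rx+c$ with $R\in\SO(3)$. So rather than directly extending $v$ (whose gradient can be far from $\GI_3$ in norm), the strategy is to locally subtract a well-chosen rigid motion, extend the resulting small displacement with a fixed linear operator, and then reinstate the rigid motion. Because the extension is applied to a displacement rather than to $v$ itself, the target estimate is on $\|\nabla \wt v - R_\xi\|_{L^2}$, which controls $\di(\nabla \wt v,\SO(3))$ pointwise.

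First I would fix, once and for all on the reference configuration, a continuous linear extension operator $P:H^1(\Yc^*)^3\to H^1(\Yc')^3$, where $\Yc'\supset\Yc$ is a slightly enlarged cluster cell (the interior of the union of $\overline{\Yc^*}$ and its translates by $\pm\Ge_\alpha$, which is connected with Lipschitz boundary by the standing hypothesis). Such a $P$ exists by standard perforated-domain extension results and satisfies $\|\nabla Pw\|_{L^2(\Yc')}\le C\|\nabla w\|_{L^2(\Yc^{*,\text{cluster}})}$ with $C$ depending only on $\Yc^*$. Rescaling to the $\e$-cell gives an operator $P_\e$ with the same constant.

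Next, for each $\xi\in\Xi_\e$ I would apply the Friesecke--James--M\"uller geometric rigidity theorem on the rescaled cluster $\e(\xi+\Yc^{*,\text{cluster}})$: this produces $R_\xi\in\SO(3)$ (and a constant $c_\xi\in\R^3$) with
$$
\|\nabla v-R_\xi\|_{L^2(\e(\xi+\Yc^{*,\text{cluster}}))}\le C\|\di(\nabla v,\SO(3))\|_{L^2(\e(\xi+\Yc^{*,\text{cluster}}))},
$$
the constant being $\e$-independent because rigidity is scale-invariant on bounded Lipschitz domains of bounded eccentricity. Setting $u_\xi=v-R_\xi(\cdot)-c_\xi$, I would then apply $P_\e$ to $u_\xi$ on the cell and define $\wt v$ cell-wise by $\wt v=R_\xi(\cdot)+c_\xi+P_\e u_\xi$ inside $\e(\xi+\Yc)$, and $\wt v=v$ on $\O_\e^*$.

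The principal obstacle is the global compatibility of these local definitions across adjacent cells: in the holes, two neighbouring definitions coming from $\xi$ and $\xi'$ need not coincide, and the different rotations $R_\xi,R_{\xi'}$ could in principle produce a jump large in $L^2$ of the gradient. This is exactly the issue resolved in \cite{GOW2} by working with clusters that overlap a full neighbourhood of each cell, so that the rigidity estimate forces $|R_\xi-R_{\xi'}|$ to be controlled by the local defect $\|\di(\nabla v,\SO(3))\|_{L^2}$ in the overlap; the overlap correction is then absorbed into the extension. I would mirror that argument verbatim, exploiting the connectedness hypothesis $\mathrm{int}(\overline{\Yc^*}\cup(\overline{\Yc^*}+\Ge_\alpha))$ Lipschitz, which guarantees that the cluster cells inherit a uniform Lipschitz geometry and make $P_\e$ well-defined with $\e$-uniform bounds. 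Once continuity of $\wt v$ across inter-cell interfaces is ensured (so $\wt v\in H^1(\O_\e)^3$), the pointwise inequality $\di(\nabla \wt v,\SO(3))\le|\nabla \wt v-R_\xi|$ on each cell, together with the rigidity and extension bounds, yields the claimed $\e$-independent estimate by summation over $\xi\in\Xi_\e$.
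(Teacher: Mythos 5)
Your proposal is correct and in substance coincides with the paper's own argument: the paper proves this theorem simply by invoking the extension result of \cite{GOW2} (Lemma 2.1), and your rigidity-plus-cluster-extension sketch is exactly an unpacking of that lemma's mechanism, with the genuinely delicate point (gluing the cell-wise extensions into an $H^1(\O_\e)^3$ function despite the differing local rotations $R_\xi$) deferred back to the same reference. So you have not taken a different route; you have re-derived, at sketch level, the cited lemma that the paper uses as a black box.
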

Hence all our results are true for such perforated plates.

%%%%%%%%%%% The bibliography starts:
% if your bibliography is in bibtex format, use those commands:
\bibliographystyle{plain}
\bibliography{reference}

\end{document}